\newcommand{\distribution}{f}
\newcommand{\latticevelocity}{\lambda}
\newcommand{\spatialdimensionality}{d}
\newcommand{\momentum}{m}
\newcommand{\average}[1]{\overline{#1}}
\newcommand{\vectorial}[1]{\bm{#1}}
\newcommand{\operatorial}[1]{\bm{#1}}
\newcommand{\levelletter}{\ell}
\newcommand{\maxlevel}{\overline{L}}
\newcommand{\minlevel}{\underline{L}}
\newcommand{\indexletter}{k}
\newcommand{\populationindex}{\alpha}
\newcommand{\velocityletter}{e}
\newcommand{\normalizedvelocityletter}{c}
\newcommand{\superscript}[2]{#1^{#2}}
\newcommand{\subscript}[2]{#1_{#2}}
\newcommand{\collided}{\star}
\newcommand{\cellletter}{C}
\newcommand{\timestep}{\Delta t}
\newcommand{\spacestep}{\Delta x}
\newcommand{\leveldifference}{\Delta \levelletter}
\newcommand{\adaptiveroundbrackets}[1]{\left ( #1 \right )}
\newcommand{\predictionstencil}{\gamma}
\newcommand{\velocitynumber}{q}
\newcommand\reallywidehat[1]{%
\savestack{\tmpbox}{\stretchto{%
  \scaleto{%
    \scalerel*[\widthof{\ensuremath{#1}}]{\kern-.6pt\bigwedge\kern-.6pt}%
    {\rule[-\textheight/2]{1ex}{\textheight}}
  }{\textheight}%
}{0.5ex}}%
\stackon[1pt]{#1}{\tmpbox}%
}
\newcommand\reallywidedoublehat[1]{%
\savestack{\tmpbox}{\stretchto{%
  \scaleto{%
    \scalerel*[\widthof{\ensuremath{#1}}]{\kern-.6pt\bigwedge\kern-.6pt}%
    {\rule[-\textheight/2]{1ex}{\textheight}}
  }{\textheight}%
}{0.5ex}}%
\stackon[-0.3mm]{\stackon[1pt]{#1}{\tmpbox}}{\tmpbox}%
}
\newcommand{\predicted}[3]{\reallywidehat{#1} \subscript{\superscript{\vphantom{#1}}{#2}}{#3}}
\newcommand{\reconstructed}[3]{\reallywidedoublehat{#1} \subscript{\superscript{\vphantom{#1}}{#2}}{#3}}
\title[analysis of the LBM-MR schemes \emph{via} the equivalent equations]{High accuracy analysis of adaptive multiresolution-based lattice Boltzmann schemes \emph{via} the equivalent equations}
\author[T. Bellotti]{\firstname{Thomas} \lastname{Bellotti}}
\address{CMAP, CNRS, Ecole polytechnique, Institut Polytechnique de Paris, 91128 Palaiseau Cedex, France.}
\email{thomas.bellotti@polytechnique.edu}
\author[L. Gouarin]{\firstname{Lo\"{\i}c} \lastname{Gouarin}}
\address{CMAP, CNRS, Ecole polytechnique, Institut Polytechnique de Paris, 91128 Palaiseau Cedex, France.}
\email{loic.gouarin@polytechnique.edu}
\author[B. Graille]{\firstname{Benjamin} \lastname{Graille}}
\address{Université Paris-Saclay, CNRS, Laboratoire de mathématiques d’Orsay, 91405, Orsay, France.}
\email{benjamin.graille@universite-paris-saclay.fr}
\author[M. Massot]{\firstname{Marc} \lastname{Massot}}
\address{CMAP, CNRS, Ecole polytechnique, Institut Polytechnique de Paris, 91128 Palaiseau Cedex, France.}
\email{marc.massot@polytechnique.edu}
\keywords{lattice Boltzmann method, adaptative mesh refinement, multiresolution analysis, equivalent equations}
\subjclass{65M99, 65M50, 76M28}
\begin{document}

\begin{abstract}
    Multiresolution provides a fundamental tool based on the wavelet theory to build adaptive numerical schemes for Partial Differential Equations and time-adaptive meshes, allowing for error control.
    We have introduced this strategy before to construct adaptive lattice Boltzmann methods with this interesting feature.
    Furthermore, these schemes allow for an effective memory compression of the solution when spatially localized phenomena -- such as shocks or fronts -- are involved, to rely on the original scheme without any manipulation at the finest level of grid and to reach a high level of accuracy on the solution.
    Nevertheless, the peculiar way of modeling the desired physical phenomena in the lattice Boltzmann schemes calls, besides the possibility of controlling the error introduced by the mesh adaptation, for a deeper and more precise understanding of how mesh adaptation alters the physics approximated by the numerical strategy. 
    In this contribution, this issue is studied by performing the equivalent equations analysis of the adaptative method after writing the scheme under an adapted formalism. 
    It provides an essential tool to master the perturbations introduced by the adaptive numerical strategy, which can thus be devised to preserve the desired features of the reference scheme at a high order of accuracy.
    The theoretical considerations are corroborated by numerical experiments in both the 1D and 2D context, showing the relevance of the analysis. In particular, we show that our numerical method outperforms traditional approaches, whether or not the solution of the reference scheme converges to the solution of the target equation.
    Furthermore, we discuss the influence of various collision strategies for non-linear problems, showing that they have only a marginal impact on the quality of the solution, thus further assessing the proposed strategy.
\end{abstract}

\maketitle

\section{Introduction}

    The lattice Boltzmann schemes are used to investigate many interesting problems for which the solutions have variability unevenly distributed in space.
    Therefore, one can take advantage of this property to reduce both the memory footprint -- especially when deployed to solve large problems -- and the computational cost of each iteration by selectively adapting the computational mesh.
    In our previous contributions \cite{bellotti2021sisc, bellotti2021multidimensional}, we have introduced and tested a novel way of implementing lattice Boltzmann methods to be deployed on spatially adapted grids generated by multiresolution analysis (called LBM-MR method).
    The collision phase -- due to its inherent local nature -- was performed only on the leaves of the corresponding adaptive graded tree where the data are directly available by using the equilibria of the reference lattice Boltzmann scheme.
    The advection phase, which is linear but non-local, was performed with a technique closely related to the Corner Transport Upwind scheme (CTU), see Colella \cite{colella1990multidimensional}.
    Using the multiresolution formalism, we first build a piece-wise constant reconstruction of the particle distribution functions at the finest level of resolution, then advect the reconstruction exactly, and finally project the new functions on the adapted grid.
    The finest level of the resolution dictates the global time-step across all the mesh levels, so no adaptation of the speed of sound is needed. This approach to time discretization has also been used by Fakhari \emph{et al.} \cite{fakhari2014finite, fakhari2015numerics, fakhari2016mass} in combination with the AMR (Adaptive Mesh Refinement) technique.
    The method reaches the following goals and has the following peculiarities: it reduces the memory trace of the procedure by employing and storing the distributions only on an adaptive hybrid partition of the domain made up of leaves of an underlying graded tree structure; being fully adaptive, thus working only on the compressed representation of the mesh, it provides potential reductions of the computational time of the method; the use of the multiresolution analysis, decomposing the distributions on a wavelet basis and using a functional analysis theoretical background \cite{devore1984maximal}, ensures -- under reasonable assumptions -- that we can control the error of the adaptive method with respect to the reference method using a unique threshold small parameter; finally, the original lattice Boltzmann scheme on the uniform grid does not need to be modified to handle non-uniform lattices.
    
    Even though the previous method allows to bound the difference between the solution of the adaptive method and that of the reference method and has proved to 
    be highly accurate,  it does not say much about the real physics behind this new lattice Boltzmann method. 
    The question of determining the physical model approximated by the lattice Boltzmann method has to be discussed with particular care, especially when one wants to introduce diffusion terms with a specific structure and when a fine control on the stability constraints is needed.
    For example, when using an acoustic scaling, the diffusive terms do not show up at leading order.
    Therefore, in order to build reliable methods, we have to ensure that our adaptive approach does not alter these important contributions.
    
    Until nowadays, two main formal approaches -- which consistently yield the same results -- are devised to analyze the physics reproduced by lattice Boltzmann methods: the Chapman-Enskog expansion \cite{chapman1990mathematical} and the Taylor expansion method \cite{dubois2008equivalent}.
    The former approach is based on a Hilbert-like asymptotic expansion, whereas the latter follows the tracks of the consistency analysis for Finite Difference methods based on Taylor expansions.
    These methods remain formal because they assume that the derivatives of negligible terms are also negligible.

    In this contribution, we clarify how the quality of the utilized multiresolution analysis impacts the physical model approximated by the adaptive numerical scheme.
    More precisely, we show that the central tool to perform the multiresolution analysis, the so-called ``prediction operator''\footnote{Which can be simply seen as an interpolation operator in this introduction.}, must be accurate enough so that the adaptative method does not significantly alter the behavior of the numerical scheme performed on the finest level of grid. Otherwise, large deviations from the behavior of the reference scheme are theoretically expected and numerically observed.
    This pertains both to the phenomena present in the macroscopic target model (transport and diffusion, for example) and to higher-order terms that play a role in determining the stability of the numerical method. The latter were generally ignored by the analyses available in the literature but may lead to unexpected behaviors of the adaptive strategy.
    The analysis is conducted on locally uniform parts of the mesh at some coarser level of resolution than the finest available mesh size. We then show, using Taylor expansions, that the adaptive scheme acting on such group of coarse cells does not perturb the original lattice Boltzmann algorithm up to  a certain order in the space step. This accuracy result holds uniformly with respect to the local level of the grid, thus can be applied globally to any adapted mesh.
    
    
    Thanks to the particular structure of the multiresolution analysis we have utilized, the extension to the multidimensional case is straightforward.
    
    The theoretical study is thoroughly assessed numerically both in one and two spatial dimensions and for linear and non-linear problems, showing very good agreement between the real and expected behavior of the adaptive scheme. 
    This confirms the predictive power of the analysis based on the equivalent equations.
    Moreover, we present a concise numerical study on the effect of different strategies for the treatment of the collision operator on the reliability of the adaptive method. 
    This study confirms the relevance of the local approach, introduced in the previous works \cite{bellotti2021sisc, bellotti2021multidimensional}, in which the collision phase is performed on the leaves. This yields accuracy levels comparable to those of more computationally expensive strategies, which are briefly introduced in this work.
    
    The paper is structured as follows: in Section \ref{sec:ReferenceLBMMethod}, we present a quick recap on the multiple-relaxation-times lattice Boltzmann schemes, setting the ground for Section \ref{sec:Geometry} which introduces the adaptive spatial discretization and the adaptive LBM-MR method.
    Particular care is devoted to the presentation of the two basic bricks of the method (called ``prediction'' and ``reconstruction'' operators) because of their preeminent role in the subsequent analysis.
    With these ideas in mind, we can go to the central sections of the work, namely Section \ref{sec:Equivalent1D} and \ref{sec:Equivalent2D}, where the ``reconstruction flattening'' allows to deploy the equivalent equations technique to the new LBM-MR method, by rewriting the reconstruction operator, usually seen in a recursive fashion, in a more friendly ``flattened'' way.
    This approach is used to analyze two of the most simple adaptive LBM-MR techniques as well as a Lax-Wendroff scheme present in the literature \cite{fakhari2014finite, fakhari2015numerics, fakhari2016mass} for comparison purposes.
    This theoretical analysis is corroborated by 1D and 2D simulations presented in Section \ref{sec:NumericalSimulations}.
    The final conclusions are drawn in Section \ref{sec:Conclusions}.
    
\section{The reference lattice Boltzmann scheme}\label{sec:ReferenceLBMMethod}

    The lattice Boltzmann methods are a class of numerical schemes used to solve numerous problems in applied mathematics and fluid mechanics. 
    Their fundamental building blocks are a particular link between the temporal and the spatial discretizations and a finite family of discrete velocities, a local collision phase and a linear stream phase.
    The aim of this section is to provide the basic ideas and notations about these methods as we shall employ them. Still, we do not aim at encompassing all the possibles shades and flavors of the lattice Boltzmann method: we refer the interested reader to the book \cite{kruger2017lattice} for more information.

\subsection{Discretization}

    The lattice Boltzmann method is based on a regular lattice of fixed step $\spacestep > 0$.
    The time discretization is performed by considering a uniform time discretization of step
    \begin{equation}\label{eq:TimeStepDefinition}
        \Delta t = \frac{\spacestep}{\latticevelocity},
    \end{equation}
    where $\latticevelocity > 0$ is the so-called ``lattice velocity''.
    Another possible choice is the so-called ``parabolic scaling'' $\timestep \sim \spacestep^2$: the following analysis is still valid for this choice.
    Then, one introduces $\velocitynumber \in \mathbb{N^{\star}}$ velocities $(\vectorial{\velocityletter}_{\populationindex})_{\populationindex = 0}^{\populationindex = \velocitynumber - 1} \subset \latticevelocity \mathbb{Z}^{\spatialdimensionality}$ compatible with the lattice, namely multiple of the lattice velocity $\latticevelocity$. We write $\vectorial{\normalizedvelocityletter}_{\populationindex} := \vectorial{\velocityletter}_{\populationindex}/\latticevelocity \in \mathbb{Z}^{\spatialdimensionality}$, $\populationindex = 0, \dots, \velocitynumber-1$, which are the dimensionless velocities.
    The distribution function of the population moving with velocity $\vectorial{\velocityletter}_{\populationindex}$ shall be denoted $\distribution^{\populationindex}$.
    It is customary to indicate schemes with the notation D$d$Q$q$, where $\spatialdimensionality$ is the spatial dimensionality and $q$ is the number of discrete velocities.
    
    \subsection{Collide-and-stream}
    
    We now assume that $t$ is the discrete time at which we know the solution and that we want to update it towards time $t + \timestep$, where $\timestep$ is given by \eqref{eq:TimeStepDefinition}.
    Any lattice Boltzmann method can be resumed as a collide-and-stream algorithm made up of the following steps:
    \begin{itemize}
            \item The collision phase is performed locally at each point of the lattice.
            To employ multiple relaxation times, it is common to set it in the space of the moments to obtain a diagonal collision matrix \cite{lallemand2000theory}.
            It reads
            \begin{align}
                \vectorial{\momentum}(t, \vectorial{x}) &= \operatorial{M} \vectorial{\distribution}(t, \vectorial{x}), \nonumber \\
                \vectorial{\distribution}^{\collided} (t, \vectorial{x}) &= \operatorial{M}^{-1} \adaptiveroundbrackets{(\operatorial{I} - \operatorial{S})  \vectorial{\momentum}(t, \vectorial{x}) + \operatorial{S} \vectorial{\momentum}^{\text{eq}} (\momentum^{0}(t, \vectorial{x}), \dots, \momentum^{\velocitynumber_{c}-1}(t, \vectorial{x}))},\label{eq:ReferenceCollision}
            \end{align}
            where the star $\star$ denotes any post-collisional state.
            Vectorial notations are used: $\vectorial{\distribution} = (\distribution^{\populationindex})_{\populationindex=0}^{\populationindex=q-1}$ for the particle distribution functions and $\vectorial{\momentum} = (\momentum^{\populationindex})_{\populationindex=0}^{\populationindex=q-1}$ for the moments vector.
            The matrix $\operatorial{M} \in \mathbb{R}^{\velocitynumber \times \velocitynumber}$ is the invertible matrix that gives the moments from the particle distribution functions. The collide operator then behaves as a diagonal relaxation on the moments vector associated with the singular matrix $\operatorial{S} \in \mathbb{R}^{\velocitynumber \times \velocitynumber}$. The rank of the matrix $\operatorial{S}$ is $\velocitynumber - \velocitynumber_c$, where $\velocitynumber_c$ is the number of conserved moments.
            The vector of the moments at equilibrium, function of the conserved moments\footnote{Those with relaxation parameter equal to zero.}, is indicated by $\vectorial{\momentum}^{\text{eq}}$.
            \item The stream phase is non-local but linear and corresponds to a shift of the data along the characteristics of each velocity field. It reads
            \begin{equation}\label{eq:referenceSchemeStream}
                \distribution^{\populationindex}(t + \timestep, \vectorial{x}) = \distribution^{\populationindex, \collided}(t, \vectorial{x} - \vectorial{\normalizedvelocityletter}_{\populationindex}\spacestep), \qquad \populationindex = 0, \dots, \velocitynumber - 1.
            \end{equation}
    \end{itemize}
    
    In order to recover the desired macroscopic equations on the conserved moments, one can act on the choice of relaxation matrix $\operatorial{S}$ and on the equilibria $\vectorial{\momentum}^{\text{eq}}$.

\section{Adaptive space discretization and adaptive lattice Boltzmann scheme}\label{sec:Geometry}

    The lattice Boltzmann methods introduced in the previous Section are defined on regular uniform lattices.
    However, in practical applications, especially when localized structures such as propagating fronts are present, a fine discretization of the whole spatial domain is extremely demanding in terms of memory and computational time. This calls for the adoption of a spatial discretization with different levels of refinement.
    In this Section, we summarize our adaptive approach presented in \cite{bellotti2021sisc, bellotti2021multidimensional} to transform a method acting on a uniform mesh at the finest available scale into another working on a spatially adaptive grid spanning many spatial scales.
    The adaptive grid is dynamically updated at each time step using a regularity evaluation through the multiresolution analysis, ensuring error control. Then, multiresolution is used to evaluate the lattice Boltzmann scheme as if we were on the finest scale, which is the one of the reference scheme presented in the previous section.
    Particular attention is paid to the way of devising the stream phase, because of its pivotal role in the forthcoming analysis.
    
    \subsection{Spatial discretization}

    \begin{figure}
       \begin{center}
            \includegraphics[width=0.5\textwidth]{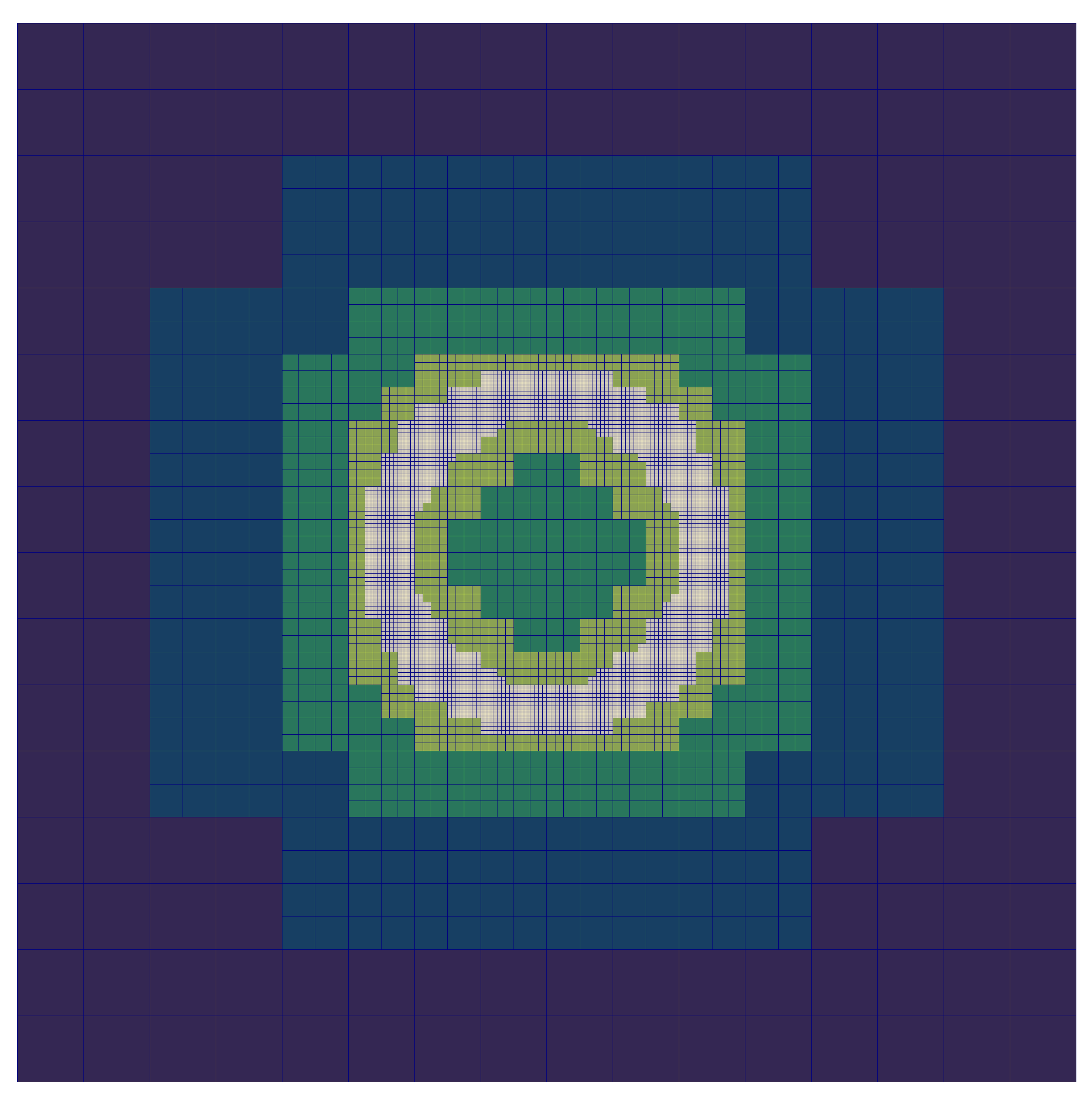}
        \end{center}\caption{\label{fig:hybrid_mesh}Example of hybrid mesh where different colors represent different levels of resolution spanning $\levelletter = \minlevel, \dots, \maxlevel$.}
    \end{figure} 
    
    The starting point to design the adaptive LBM-MR method is the choice of a spatial discretization, which is performed using dyadic Cartesian grids with possibly different levels of resolution.
    Consider, for the sake of a simpler presentation, a bounded domain $\Omega = [0, 1]^{\spatialdimensionality}$, typically for $\spatialdimensionality = 1, 2, 3$. Following our previous contributions \cite{bellotti2021sisc, bellotti2021multidimensional}, we can build a hybrid partition of such a domain formed by cells at different levels of resolution between $\minlevel$ and $\maxlevel$. An example is given in Figure \ref{fig:hybrid_mesh}. A cell of the level $\levelletter$, $\levelletter = \minlevel, \dots, \maxlevel$, is given by
    \begin{equation*}
        \cellletter_{\levelletter, \vectorial{\indexletter}} = \prod_{a = 1}^{\spatialdimensionality} [2^{-\levelletter} \indexletter_{a}, 2^{-\levelletter} (\indexletter_{a} + 1)],
        \qquad \vectorial{\indexletter} \in \{0, \dots, 2^{\leveldifference} - 1 \}^{\spatialdimensionality}.
    \end{equation*}
    For the sake of notation, we have introduced $\leveldifference$ as the distance between the current level $\levelletter$ and the finest level $\maxlevel$ given by $\leveldifference = \maxlevel - \levelletter$.
    The center of the cell $\cellletter_{\levelletter, \vectorial{\indexletter}}$ is given by $\vectorial{x}_{\levelletter, \vectorial{\indexletter}} := 2^{-\levelletter} (\vectorial{\indexletter} + 1/2)$ and its Lebesgue measure is $(\spacestep_{\levelletter})^{\spatialdimensionality}$, where $\spacestep_{\levelletter} = 2^{\leveldifference} \spacestep$ with $\spacestep = 2^{-\maxlevel}$ that corresponds to the space-step of the finest grid.
    
    Remark that by \eqref{eq:TimeStepDefinition} the time-step is constant across all the levels and is dictated by the finest resolution allowed for the hybrid grid, which corresponds to the resolution of reference scheme on the uniform mesh.
    This approach complies with \cite{fakhari2014finite, fakhari2015numerics, fakhari2016mass} and with our previous works \cite{bellotti2021sisc, bellotti2021multidimensional}.
    Other choices are possible to build lattice Boltzmann methods on non-uniform grids: the predominant one consists in taking a different time-step for each level of resolution \cite{filippova1998grid, dupuis2003theory, rohde2006}.
    The main drawbacks of this approach are that one generally needs to modify the scheme to be consistent with the target model and that time interpolations with storage of the solution at the previous time steps are necessary. 
    
    In this contribution, the way of constructing the hybrid partition of $\Omega$ at each time step is of little interest for the theoretical analysis we aim at performing. We shall come back on this point later.
    The interested reader may refer to our previous works \cite{bellotti2021sisc} and \cite{bellotti2021multidimensional}.
    Consequently, the meshes considered in this paper are mostly uniform but made up of cells at a smaller level of refinement  than the maximum allowed level $\maxlevel$, which still dictates the time-step of the method \eqref{eq:TimeStepDefinition}.
    We just mention that the mesh is constructed using multiresolution analysis, which allows, given a threshold parameter $0 < \epsilon \ll 1$, to control the quality of the solution with respect to the reference scheme on the uniform mesh at the finest level $\maxlevel$.
    We sometimes refer to the cells yielding this hybrid partition of the domain as ``leaves'' because they can be seen as such in a binary tree (for $\spatialdimensionality = 1$), a quadtree (for $\spatialdimensionality = 2$) or an octree (for $\spatialdimensionality = 3$).
    
    In the sequel, the quantities that can be interpreted as mean values on the corresponding cell $\cellletter_{\levelletter, \vectorial{\indexletter}}$ shall be denoted with a bar.
    The quantities without a bar are to be intended as point-valued quantities and are most of the time taken at the cell center $\vectorial{x}_{\levelletter, \vectorial{\indexletter}}$.

\subsection{The adaptive LBM-MR method}\label{sec:AdaptiveMethod}

    Based on the space/time discretizations introduced in the previous section, we can recap the basic ingredients used to build the adaptive LBM-MR method.
    These are the prediction operator and the reconstruction operator. The latter is generated by the recursive application of the former. 
    The special care devoted to the design of these operators is the key to achieve the error control. 
    Then, we adapt the two basic operations making up the lattice Boltzmann scheme, namely the collision and the stream phase. 
    The former is applied locally on each cell whereas the latter relies on the reconstruction of incoming and outgoing pseudo-fluxes only close to the edges of each cell, thus used a modest number of times. This yields a significant reduction of the number of performed operations on adapted grids. Moreover, we are able to ``break'' the recursivity of the reconstruction operator as illustrated in Section \ref{sec:ReconstructionFlattening}, allowing both for a cost reduction of the method and for the forthcoming theoretical study.

    \subsubsection{Prediction and reconstruction operators}\label{sec:PredictionandReconstruction}
    
    The prediction operator is an important ingredient both of adaptive multiresolution \cite{cohen2003fully} and of our adaptive LBM method \cite{bellotti2021multidimensional}.
    In many applications \cite{bihari1997multiresolution}, this operator for $\spatialdimensionality \geq 2$ is essentially built from the one for $\spatialdimensionality = 1$.
        For this reason, we present the case $\spatialdimensionality = 1$ and then we briefly sketch how the multidimensional extension is done.
        The predicted value -- indicated with a hat -- over a cell $\cellletter_{\levelletter + 1, 2\indexletter + \delta}$ is given by
        \begin{equation}\label{eq:PredictionOperator}
            \predicted{\average{\distribution}}{\populationindex}{\levelletter + 1, 2\indexletter + \delta} = \average{\distribution}_{\levelletter, \indexletter}^{\populationindex} + (-1)^{\delta} Q_{1}^{\predictionstencil}(\indexletter; \average{\vectorial{\distribution}}_{\levelletter}), \quad \text{with} \quad Q_{1}^{\predictionstencil}(\indexletter; \average{\vectorial{\distribution}}_{\levelletter}) = \sum_{\pi = 1}^{\predictionstencil} w_{\pi} \adaptiveroundbrackets{\average{\distribution}_{\levelletter, \indexletter + \pi}^{\populationindex} - \average{\distribution}_{\levelletter, \indexletter - \pi}^{\populationindex}},
        \end{equation}
        for $\delta  = 0, 1$, where the weights $(w_{\pi})_{\pi = 1}^{\pi = \predictionstencil}$ are given in \cite{bellotti2021sisc}. The parameter $\predictionstencil \in \mathbb{N}$ is the size of the prediction stencil. 
        The predicted value is a tentative one built using information on a coarser level of resolution.
        The weights are constructed as follows. 
        The local reconstruction polynomial of degree $2\predictionstencil$ is written in the canonical basis
        $$\pi^{\populationindex}_{\levelletter, \indexletter}(x) = \sum_{m = 0}^{m = 2\predictionstencil} A_{\levelletter, \indexletter}^{\populationindex, m} x^m,$$
        where the coefficients $(A_{\levelletter, \indexletter}^{\populationindex, m})_{m=0}^{m = 2\predictionstencil}$ are obtained by enforcing that the following linear constraints hold
        \begin{equation}\label{eq:systemReconstructionPolynomial}
            \frac{1}{\spacestep_{\levelletter}} \int_{x_{\levelletter, \indexletter} + \spacestep_{\levelletter}(\delta - 1/2)}^{x_{\levelletter, \indexletter} + \spacestep_{\levelletter}(\delta + 1/2)} \pi_{\levelletter, \indexletter}^{\populationindex}(x) \text{d}x = \average{\distribution}^{\populationindex}_{\levelletter, \indexletter + \delta}, \qquad \delta = -\predictionstencil, \dots, 0, \dots, \predictionstencil.
        \end{equation}
        This yields a linear system with invertible matrix $\operatorial{T} \in \mathbb{R}^{(2\predictionstencil+1)\times (2\predictionstencil+1)}$
        \begin{equation}\label{eq:SystemPrediction}
            \operatorial{T} (A_{\levelletter, \indexletter}^{\populationindex, m})_{m=0}^{m = 2\predictionstencil} = (\average{\distribution}_{\levelletter, \indexletter + \delta}^{\populationindex})_{\delta = -\predictionstencil}^{\delta = +\predictionstencil}.
        \end{equation}
        Once the coefficients $(A_{\levelletter, \indexletter}^{\populationindex, m})_{m=0}^{m = 2\predictionstencil}$ are known, the prediction operator and the corresponding weights are obtained by averaging the reconstruction polynomial
        \begin{equation*}
            \predicted{\average{\distribution}}{\populationindex}{\levelletter + 1, 2\indexletter + \delta} = \frac{1}{\spacestep_{\levelletter + 1}} \int_{x_{\levelletter, \indexletter} + \spacestep_{\levelletter+1}(\delta - 1)}^{x_{\levelletter, \indexletter} + \spacestep_{\levelletter+1}\delta} \pi^{\populationindex}_{\levelletter, \indexletter}(x) \text{d}x, \qquad \delta = 0, 1.
        \end{equation*}
        
        In this work, we emphasize the difference between the case $\predictionstencil = 0$, called Haar case\footnote{Because one can show that this approach can be linked with the theory of the Haar wavelet.} or direct evaluation, and the case $\predictionstencil = 1$ (with $w_1 = -1/8$).
        
        \begin{remark}[Polynomial exactness]\label{rem:polynomialExactness}
            The prediction operator of stencil width $\predictionstencil$ has been built in order to exactly recover the average on the cell $\cellletter_{\levelletter + 1, 2\indexletter + \delta}$ when the function $\distribution^{\populationindex}$ is polynomial of degree at most $2\predictionstencil + 1$.
            However, as we shall see, it would be dangerous to conclude on the effect of the prediction operator on the equivalent equations just using this statement.
        \end{remark}
        
        The multidimensional generalization is carried out following the path of Bihari and Harten \cite{bihari1997multiresolution} in a tensor product fashion. It is useful to present it because it provides us with a formalism to speed up proofs in the multidimensional framework.
        For $\spatialdimensionality = 2$, one considers a reconstruction polynomial $\pi_{\levelletter, \vectorial{\indexletter}}^{\populationindex}(\vectorial{x})$ made up only of terms belonging to the product of two uni-dimensional polynomials  $\pi_{\levelletter, \indexletter_x}^{\populationindex}(x)$ and $\pi_{\levelletter, \indexletter_y}^{\populationindex}(y)$, one for each Cartesian direction. 
        This yields
        \begin{equation*}
            \pi_{\levelletter, \vectorial{\indexletter}}^{\populationindex}(\vectorial{x}) = \pi_{\levelletter, \vectorial{\indexletter}}^{\populationindex}(x, y) = \sum_{m = 0}^{2\predictionstencil}\sum_{n = 0}^{2\predictionstencil} A_{\levelletter, \vectorial{\indexletter}}^{\populationindex, m, n} x^m y^n.
        \end{equation*}
        It can be easily shown that, thanks to the tensor-product structure, \eqref{eq:SystemPrediction} becomes
        \begin{equation*}
           (\operatorial{T} \otimes \operatorial{T}) (A_{\levelletter, \indexletter}^{\populationindex, m, n})_{m, n=0}^{m, n = 2\predictionstencil} = (\average{\distribution}_{\levelletter, \vectorial{\indexletter} + \vectorial{\delta}}^{\populationindex})_{\vectorial{\delta} \in  \{ -\predictionstencil, \dots, +\predictionstencil \}^{2}},
        \end{equation*}
        where $\otimes$ is the Kronecker product of matrices.
        A useful property of the Kronecker product is that $(\operatorial{T} \otimes \operatorial{T})^{-1} = \operatorial{T}^{-1} \otimes \operatorial{T} ^{-1}$, therefore we can inverse the tensor product by inverting each of its terms.
        
        Starting from the prediction operator, we can construct the reconstruction operator, denoted by the double hat $\reconstructed{~~~~}{ }{ }$.
        This is used to recover information at the finest level $\maxlevel$ using a recursive application of the prediction operator until reaching information stored on the considered level $\levelletter$.

    \subsubsection{Collide-and-stream}

        Once we have a reconstruction operator generated by a prediction operator for some given stencil $\predictionstencil \in \mathbb{N}$, we are ready to devise a LBM-MR method as shown in \cite{bellotti2021sisc, bellotti2021multidimensional}.
        Like the standard lattice Boltzmann method, it is made up of two different steps:

        \begin{itemize}
            \item The collision phase is performed on each cell $\cellletter_{\levelletter, \vectorial{\indexletter}}$, $\levelletter = \minlevel, \dots, \maxlevel$, $\vectorial{\indexletter}\in\lbrace 0,\ldots, 2^{\leveldifference} - 1 \rbrace^{\spatialdimensionality}$, of the hybrid mesh at time $t$ and is completely local.
            It reads
            \begin{align}
                \average{\vectorial{\momentum}}_{\levelletter, \vectorial{\indexletter}}(t) &= \operatorial{M} \average{\vectorial{\distribution}}_{\levelletter, \vectorial{\indexletter}}(t), \nonumber \\
                \average{\vectorial{\distribution}}_{\levelletter, \vectorial{\indexletter}}^{\collided} (t) &= \operatorial{M}^{-1} \adaptiveroundbrackets{(\operatorial{I} - \operatorial{S})  \average{\vectorial{\momentum}}_{\levelletter, \vectorial{\indexletter}}(t) + \operatorial{S} \vectorial{\momentum}^{\text{eq}} (\average{\momentum}_{\levelletter, \vectorial{\indexletter}}^{0}(t), \dots, \average{\momentum}_{\levelletter, \vectorial{\indexletter}}^{\velocitynumber_c - 1}(t))}. \label{eq:CollisionPhase}
            \end{align}
            The equilibria and the relaxation matrix are the same than for the reference method, see \eqref{eq:ReferenceCollision}.
            \item The stream phase at velocity $\vectorial{\velocityletter}_{\populationindex}$, for $\populationindex = 0, \dots, \velocitynumber - 1$, is linear but non-local.
            It reads
            \begin{equation}\label{eq:StreamPhase}
                \average{\distribution}^{\populationindex}_{\levelletter, \vectorial{\indexletter}} (t+\timestep) = \average{\distribution}^{\populationindex, \collided}_{\levelletter, \vectorial{\indexletter}} (t) + \frac{1}{2^{d\leveldifference}} \adaptiveroundbrackets{\sum_{\overline{\vectorial{\indexletter}} \in \mathcal{E}_{\levelletter, \vectorial{\indexletter}}^{\populationindex}} \reconstructed{\average{\distribution}}{\populationindex, \collided}{\maxlevel, \overline{\vectorial{\indexletter}}} (t) - \sum_{\overline{\vectorial{\indexletter}} \in \mathcal{A}_{\levelletter, \vectorial{\indexletter}}^{\populationindex}} \reconstructed{\average{\distribution}}{\populationindex, \collided}{\maxlevel, \overline{\vectorial{\indexletter}}} (t)},
            \end{equation}
            where we have taken
            \begin{gather*}
                \mathcal{B}_{\levelletter, \vectorial{\indexletter}} = \{\vectorial{\indexletter}2^{\leveldifference} + \vectorial{\delta} ~ : ~ \vectorial{\delta} \in \{0, \dots, 2^{\leveldifference} - 1 \}^{\spatialdimensionality}\}, \\
                \mathcal{E}_{\levelletter, \vectorial{\indexletter}}^{\populationindex} = (\mathcal{B}_{\levelletter, \vectorial{\indexletter}} - \vectorial{\normalizedvelocityletter}_{\populationindex}) \smallsetminus \mathcal{B}_{\levelletter, \vectorial{\indexletter}}, \qquad \mathcal{A}_{\levelletter, \vectorial{\indexletter}}^{\populationindex} =  \mathcal{B}_{\levelletter, \vectorial{\indexletter}} \smallsetminus (\mathcal{B}_{\levelletter, \vectorial{\indexletter}} - \vectorial{\normalizedvelocityletter}_{\populationindex}).
            \end{gather*}
            The idea behind the formula is to build, \emph{via} the reconstruction operator $\reconstructed{~~}{}{}$, the piece-wise constant solution on the uniform mesh at the finest level $\maxlevel$. This reconstruction is exactly advected by the constant velocity field $\vectorial{\velocityletter}_{\populationindex}$ and then averaged on the cells of the hybrid mesh.
            This procedure is really close to the so-called Corner Transport Upwind scheme \cite{colella1990multidimensional}.
            Cells indexed by elements of $\mathcal{E}_{\levelletter, \vectorial{\indexletter}}^{\populationindex}$ can be seen to yield an incoming pseudo-flux into the current cell $\cellletter_{\levelletter, \vectorial{\indexletter}}$, whereas those linked with $\mathcal{A}_{\levelletter, \vectorial{\indexletter}}^{\populationindex}$ give an outgoing pseudo-flux. An example is given in Figure \ref{fig:setsstream}.
            
                    \begin{figure}[h]
        \begin{center}
            \def\svgwidth{0.4\textwidth}
\begingroup%
  \makeatletter%
  \providecommand\color[2][]{%
    \errmessage{(Inkscape) Color is used for the text in Inkscape, but the package 'color.sty' is not loaded}%
    \renewcommand\color[2][]{}%
  }%
  \providecommand\transparent[1]{%
    \errmessage{(Inkscape) Transparency is used (non-zero) for the text in Inkscape, but the package 'transparent.sty' is not loaded}%
    \renewcommand\transparent[1]{}%
  }%
  \providecommand\rotatebox[2]{#2}%
  \newcommand*\fsize{\dimexpr\f@size pt\relax}%
  \newcommand*\lineheight[1]{\fontsize{\fsize}{#1\fsize}\selectfont}%
  \ifx\svgwidth\undefined%
    \setlength{\unitlength}{342.50262425bp}%
    \ifx\svgscale\undefined%
      \relax%
    \else%
      \setlength{\unitlength}{\unitlength * \real{\svgscale}}%
    \fi%
  \else%
    \setlength{\unitlength}{\svgwidth}%
  \fi%
  \global\let\svgwidth\undefined%
  \global\let\svgscale\undefined%
  \makeatother%
  \begin{picture}(1,0.75449533)%
    \lineheight{1}%
    \setlength\tabcolsep{0pt}%
    \put(0,0){\includegraphics[width=\unitlength,page=1]{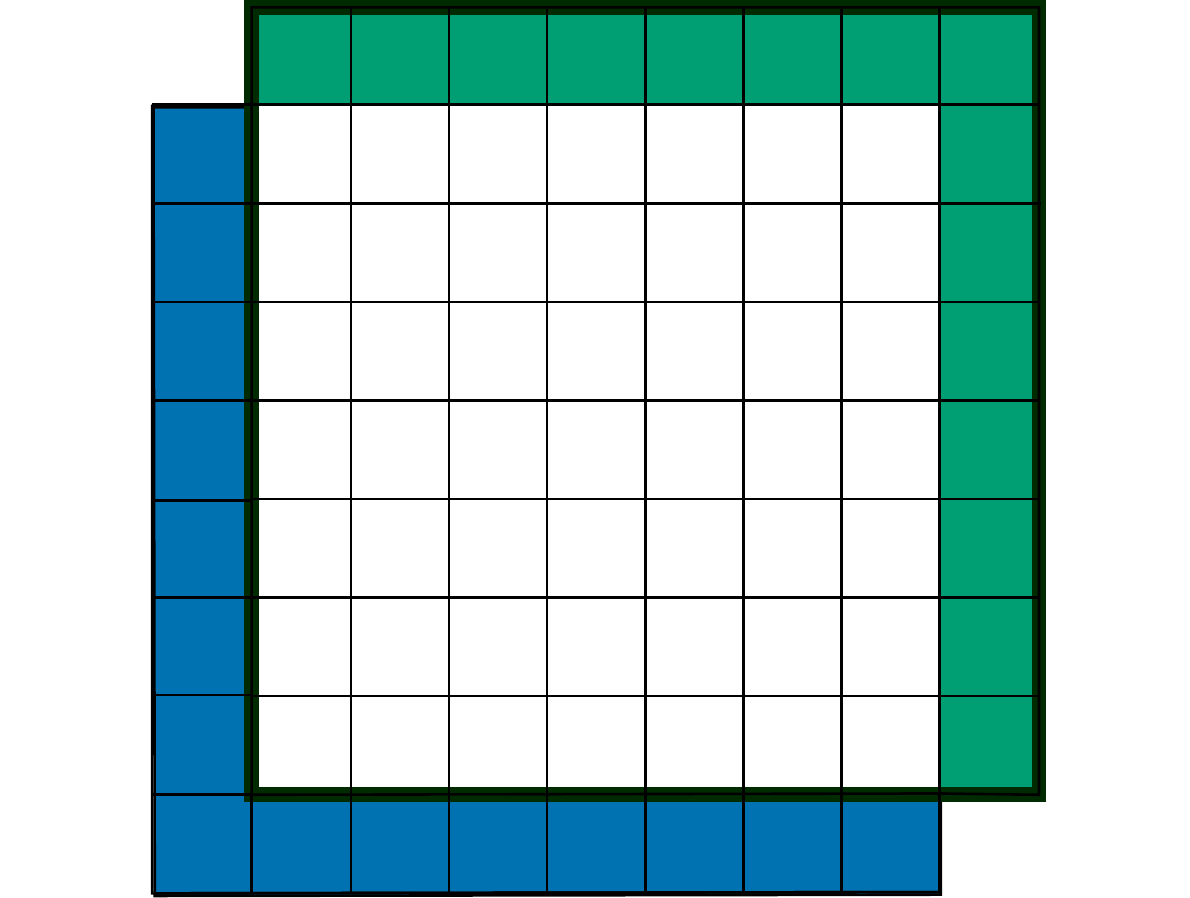}}%
    \put(-0.00859654,0.31871248){\color[rgb]{0,0.44705882,0.69803922}\makebox(0,0)[lt]{\lineheight{1.25}\smash{\begin{tabular}[t]{l}$\mathcal{E}_{\levelletter, \vectorial{\indexletter}}^{\populationindex}$\end{tabular}}}}%
    \put(0.92399979,0.43142696){\color[rgb]{0,0.61960784,0.45098039}\makebox(0,0)[lt]{\lineheight{1.25}\smash{\begin{tabular}[t]{l}$\mathcal{A}_{\levelletter, \vectorial{\indexletter}}^{\populationindex}$\end{tabular}}}}%
    \put(0,0){\includegraphics[width=\unitlength,page=2]{in_and_out_sets.pdf}}%
    \put(0.31325725,0.43142696){\color[rgb]{0.8,0.4745098,0.65490196}\makebox(0,0)[lt]{\lineheight{1.25}\smash{\begin{tabular}[t]{l}$\vectorial{\normalizedvelocityletter}_{\populationindex}=(1,1)$\end{tabular}}}}%
  \end{picture}%
\endgroup%

        \end{center}
        \caption{\label{fig:setsstream}Example of sets involved in the stream phase for $\spatialdimensionality = 2$, velocity $\vectorial{\normalizedvelocityletter}_{\populationindex} = (1, 1)$ and $\leveldifference = 3$. The cells yielding an incoming pseudo-flux are colored in blue whereas those giving an out-going pseudo-flux are highlighted in green. The perimeter of the cell $\cellletter_{\levelletter, \vectorial{\indexletter}}$ is traced with a thicker black line.}
        \end{figure}
        \end{itemize}
        
    \subsection{Mesh adaptation}
    
        Though the mesh adaptation algorithm is not the focus of this paper (see \cite{bellotti2021sisc, bellotti2021multidimensional} for the details), we briefly describe it. 
        At each time-step, a cell of the hybrid mesh can be eliminated from the structure if its associated detail, namely the difference between the actual stored average and the predicted average \emph{via} \eqref{eq:PredictionOperator}, is smaller than a given level-dependent threshold.
        Conversely, if the detail on the considered cell is large, we refine it in order to ensure that abrupt changes in the solution are correctly followed by the method.
        Then, the adaptation strategy reads
        \begin{equation}\label{eq:DetailInequality}
        \begin{aligned}
            \text{Coarsen} \quad \cellletter_{\levelletter, \vectorial{\indexletter}} \quad \text{if} \quad \max_{\populationindex} \adaptiveroundbrackets{|\predicted{\average{\distribution}}{\populationindex}{\levelletter, \vectorial{\indexletter}} - \average{\distribution}^{\populationindex}_{\levelletter, \vectorial{\indexletter}}|} & \leq \epsilon \, 2^{-\spatialdimensionality \leveldifference}, \\
            \text{Refine} \quad \cellletter_{\levelletter, \vectorial{\indexletter}} \quad \text{if} \quad \max_{\populationindex} \adaptiveroundbrackets{|\predicted{\average{\distribution}}{\populationindex}{\levelletter, \vectorial{\indexletter}} - \average{\distribution}^{\populationindex}_{\levelletter, \vectorial{\indexletter}}|} &\geq \epsilon \, 2^{-\spatialdimensionality (\leveldifference-1) + \overline{\mu}}, 
        \end{aligned}
        \end{equation} 
        where $\overline{\mu} \geq 0$ is a free parameter to be choosen linked to the expected regularity of the solution (see~\cite{bellotti2021sisc}).
        The process is repeated until no modification of the mesh are made.
        Indeed, the example of hybrid grid given in Figure \ref{fig:hybrid_mesh} has been built using this very algorithm.
        
        The interest of having used a rather expensive and complex operator $\reconstructed{~~~~}{ }{ }$ to build the adaptive stream phase \eqref{eq:StreamPhase} is that it ensures that if the grid is adapted at each time step using \eqref{eq:DetailInequality} we are able to control the additional error introduced by the adaptive strategy with $\epsilon$ for any lattice Boltzmann scheme and without having to modify it.
        To the best of our knowledge, this is unprecedented in the literature and cannot be achieved with other mesh adaptation strategies such as the cell-based AMR \cite{rohde2006, eitel2013, fakhari2014finite, fakhari2015numerics, fakhari2016mass}, which relies on strongly problem-dependent heuristic criteria.
        
        \begin{remark}[On the use of uniform grids in the paper]\label{rem:UniformMesh}
            The analysis that we shall develop in Section \ref{sec:Equivalent1D} and \ref{sec:Equivalent2D}
            and most of the numerical tests of Section \ref{sec:NumericalSimulations} are conducted, as previously stressed, on uniform meshes at some level of refinement $\levelletter = \minlevel, \dots, \maxlevel$.
            Still, this analysis is also relevant in the case of an adapted mesh for the following reasons:
            \begin{itemize}
             \item Given a leaf $\cellletter_{\levelletter, \vectorial{\indexletter}}$ at some level $\levelletter$, it is surrounded by enough cells (both leaves or halo cells) at the same level of refinement. Therefore the mesh can be considered to be locally uniform \cite{cohen2003fully}, which perfectly fits the local character of the analysis.
             \item The theoretical analysis is based on the assumption that the distributions $\distribution^{\populationindex}$ are smooth on the whole domain at every considered time.
             Thus, once one fixes a small but finite tolerance $\epsilon$ and the authorized level range $\maxlevel - \minlevel \geq 0$, letting $\maxlevel$ increase (imagine $\maxlevel \to +\infty$, thus $\spacestep \to 0$), even the uniform mesh at level $\minlevel$ will allow to control errors by $\epsilon$ \emph{via} multiresolution.
            \end{itemize}
        \end{remark}

        Before switching to the analysis through the theory of equivalent equations, let us come back to several interesting variants for treating both collision and stream phases.
        
    \section{Alternative treatments of the collision and the stream phases}
    
    Clearly, the LBM-MR scheme presented in the previous section is not the only possible strategy to adapt the reference numerical method.
    Therefore, the aim of this section is to present and discuss some alternative ideas, which will also be analyzed in the following in order to shed some light on how the various strategies behave.
        
    \subsection{Enhanced collision treatments}\label{sec:DifferentCollisionStrategies}
        
        In the unidimensional case \cite{bellotti2021sisc}, we studied the effect of considering the collision operator only on the leaves of the adaptive mesh \eqref{eq:CollisionPhase} in terms of multiresolution and the possibility of recovering a control on the additional error.
        When the equilibria are linear functions of the conserved moments, this does not have any effect on the outcome of the method. In the non-linear case, the control on the additional error is still possible except in marginal pathological cases.
        Even if the scope of the present work is not to fully discuss the different approaches to build the collision operator, we present two strategies that are eventually tested to build more reliable -- though costly -- collision phases.
        In the sequel, we shall call LBM-MR-LC scheme the one where the collision is done locally at the level of the leaves \eqref{eq:CollisionPhase}, to distinguish it from the other approaches.
    
        \subsubsection{Reconstructed collision}
            The first alternative approach we present, introduced in our previous work \cite{bellotti2021sisc}, is called ``reconstructed collision''. The resulting scheme is called LBM-MR-RC.
            The collision phase is done on each cell $\cellletter_{\levelletter, \vectorial{\indexletter}}$ of the hybrid mesh at time $t$ reconstructing the information needed in the (possibly) non-linear equilibria.
            This is (the change of basis is understood)
            \begin{align}
                \average{\vectorial{\distribution}}_{\levelletter, \vectorial{\indexletter}}^{\collided} (t) &= \operatorial{M}^{-1} 
                    \biggl( (\operatorial{I} - \operatorial{S})  \average{\vectorial{\momentum}}_{\levelletter, \vectorial{\indexletter}}(t) + \frac{1}{2^{\spatialdimensionality \leveldifference}} \operatorial{S} \sum_{\overline{\vectorial{\indexletter}} \in \mathcal{B}_{\levelletter, \vectorial{\indexletter}}}  \vectorial{\momentum}^{\text{eq}} (\reconstructed{\average{\momentum}}{0}{\maxlevel, \overline{\vectorial{\indexletter}}}(t), \dots, \reconstructed{\average{\momentum}}{\velocitynumber_c - 1}{\maxlevel, \overline{\vectorial{\indexletter}}}(t))
                    \biggr)
                    . \label{eq:CollisionPhaseReconstructed}
            \end{align}
            The idea behind is to make the following approximation
            \begin{align*}
                \frac{1}{|\cellletter_{\levelletter, \vectorial{\indexletter}}|} \int_{\cellletter_{\levelletter, \vectorial{\indexletter}}}  \vectorial{\momentum}^{\text{eq}} (\momentum^{0}(t, \vectorial{x}), \dots, \momentum^{\velocitynumber_c-1}(t, \vectorial{x})) \text{d}\vectorial{x} &\simeq \frac{1}{|\cellletter_{\levelletter, \vectorial{\indexletter}}|} \sum_{\overline{\vectorial{\indexletter}} \in \mathcal{B}_{\levelletter, \vectorial{\indexletter}}} |\cellletter_{\maxlevel, \overline{\vectorial{\indexletter}}}| \vectorial{\momentum}^{\text{eq}} (\reconstructed{\average{\momentum}}{0}{\maxlevel, \overline{\vectorial{\indexletter}}}(t), \dots, \reconstructed{\average{\momentum}}{\velocitynumber_c - 1}{\maxlevel, \overline{\vectorial{\indexletter}}}(t)), \\
                &= \frac{1}{2^{\spatialdimensionality \leveldifference}} \sum_{\overline{\vectorial{\indexletter}} \in \mathcal{B}_{\levelletter, \vectorial{\indexletter}}} \vectorial{\momentum}^{\text{eq}} (\reconstructed{\average{\momentum}}{0}{\maxlevel, \overline{\vectorial{\indexletter}}}(t), \dots, \reconstructed{\average{\momentum}}{\velocitynumber_c - 1}{\maxlevel, \overline{\vectorial{\indexletter}}}(t)).
            \end{align*}
            It can be shown that this way of proceeding \eqref{eq:CollisionPhaseReconstructed} coincides with \eqref{eq:CollisionPhase} whenever the moments at the equilibrium are linear functions, because the information added by the use of the reconstruction operator disappears once we project back onto the current level of resolution $\levelletter$.
            However, this is false for non-linear equilibria.
            The notable disadvantage of this approach is that the reconstruction is needed on every cell belonging to $\mathcal{B}_{\levelletter, \vectorial{\indexletter}}$ and not only on those close to the cell edge as for the stream phase \eqref{eq:StreamPhase}.
            This is what the next strategy tries to alleviate.

        \subsubsection{Predict-and-quadrate}
            
            This approach closely follows that of \cite{hovhannisyan2010} for treating the forcing terms in Finite Volume schemes and shall be called LBM-MR-PQC.
            Reminding us of Section \ref{sec:PredictionandReconstruction}, consider the polynomials interpolating the $\populationindex$-th momentum around the cell $\cellletter_{\levelletter, \vectorial{\indexletter}}$ at time $t$ given by $\pi_{\levelletter, \vectorial{\indexletter}}^{\populationindex}(t, \vectorial{x}) = \operatorial{M} \vectorial{\pi}_{\levelletter, \vectorial{\indexletter}}(t, \vectorial{x})|_{\populationindex}$.
            Having this continuous approximation of the solution, one performs the following steps
            \begin{align*}
                \frac{1}{|\cellletter_{\levelletter, \vectorial{\indexletter}}|} \int_{\cellletter_{\levelletter, \vectorial{\indexletter}}}  \vectorial{\momentum}^{\text{eq}} (\momentum^{0}(t, \vectorial{x}), \dots, \momentum^{\velocitynumber_c-1}(t, \vectorial{x})) \text{d}\vectorial{x} &\simeq \frac{1}{|\cellletter_{\levelletter, \vectorial{\indexletter}}|} \int_{\cellletter_{\levelletter, \vectorial{\indexletter}}}  \vectorial{\momentum}^{\text{eq}} (\pi_{\levelletter, \vectorial{\indexletter}}^{0}(t, \vectorial{x}), \dots, \pi_{\levelletter, \vectorial{\indexletter}}^{\velocitynumber_c-1}(t, \vectorial{x})) \text{d}\vectorial{x}, \\
                &\simeq \frac{1}{|\cellletter_{\levelletter, \vectorial{\indexletter}}|} \sum_{i=1}^N \tilde{w}_i \vectorial{\momentum}^{\text{eq}} (\pi_{\levelletter, \vectorial{\indexletter}}^{\populationindex}(t, \tilde{\vectorial{x}}_i), \dots, \pi_{\levelletter, \vectorial{\indexletter}}^{\velocitynumber_c-1}(t, \vectorial{\tilde{x}_i})),
            \end{align*}
            where one employs a quadrature formula with $N \in \mathbb{N}^{\star}$ weights $(\tilde{w}_i)_{i = 1}^{i=N}$ and quadrature points $(\tilde{\vectorial{x}}_i)_{i = 1}^{i=N}$. 
            Therefore, the collision phase reads
            \begin{align}
                \average{\vectorial{\distribution}}_{\levelletter, \vectorial{\indexletter}}^{\collided} (t) &= \operatorial{M}^{-1} 
                    \biggl( (\operatorial{I} - \operatorial{S})  \average{\vectorial{\momentum}}_{\levelletter, \vectorial{\indexletter}}(t) + \frac{1}{|\cellletter_{\levelletter, \vectorial{\indexletter}}|} \operatorial{S} \sum_{i=1}^N \tilde{w}_i \vectorial{\momentum}^{\text{eq}} (\pi_{\levelletter, \vectorial{\indexletter}}^{0}(t, \tilde{\vectorial{x}}_i), \dots, \pi_{\levelletter, \vectorial{\indexletter}}^{\velocitynumber_c-1}(t, \tilde{\vectorial{x}}_i))
                    \biggr). \label{eq:CollisionPhaseQuadrature}
            \end{align}
            
            This procedure is certainly less computationally expensive than that of the LBM-MR-RC scheme \eqref{eq:CollisionPhaseReconstructed} because one utilizes only one stage of prediction and the number $N$ is generally not so large.
            It relies on the fact that the solution is expected to locally behave like a low degree polynomial, which is transformed by the equilibria into another non-linear function and hoping that the quadrature formula is accurate enough to approximate the integral over the considered cell.

    \subsection{Lax-Wendroff stream phase}
    
        In our multiresolution framework, constructing the adaptive stream phase by using the reconstruction operator was a compulsory choice since we wish to recover a control on the numerical error of the adaptive strategy.
        However, for methods based on the heuristic AMR, simpler and more physically funded approaches are possible.
        Indeed, before going on with the analysis of our adaptive method, we present, for comparison purposes, the so-called Lax-Wendroff method proposed by \cite{fakhari2014finite, fakhari2015numerics, fakhari2016mass} for a D2Q9 scheme to be used on adaptive grid constructed \emph{via} an AMR algorithm. We may imagine that in our framework the mesh adaptation could be done using multiresolution as presented in the previous section.
        The collision phase is the same than \eqref{eq:CollisionPhase}. 
        What changes is the stream phase coming under the form
        \begin{equation}\label{eq:LaxWendroff}
            \average{\distribution}^{\populationindex}_{\levelletter, \vectorial{\indexletter}}(t + \timestep) = \adaptiveroundbrackets{1 - \frac{1}{4^{\leveldifference}}}\hspace{-0.5ex} \average{\distribution}^{\populationindex, \collided}_{\levelletter, \vectorial{\indexletter}}(t) + \frac{1}{2^{\leveldifference + 1}} \adaptiveroundbrackets{1 + \frac{1}{2^{\leveldifference}}} \hspace{-0.5ex} \average{\distribution}^{\populationindex, \collided}_{\levelletter, \vectorial{\indexletter} - \vectorial{\normalizedvelocityletter}_{\populationindex}/|\vectorial{\normalizedvelocityletter}_{\populationindex}|_2}(t) - \frac{1}{2^{\leveldifference + 1}} \adaptiveroundbrackets{1 - \frac{1}{2^{\leveldifference}}} \hspace{-0.5ex} \average{\distribution}^{\populationindex, \collided}_{\levelletter, \vectorial{\indexletter} + \vectorial{\normalizedvelocityletter}_{\populationindex}/|\vectorial{\normalizedvelocityletter}_{\populationindex}|_2}(t).
        \end{equation}
        
        At least for $\spatialdimensionality = 1$, we can show that this scheme has some link with the prediction operator \eqref{eq:PredictionOperator} but is not a multiresolution scheme, namely that it does not involve the reconstruction operator. This is the meaning of the following.
        
        \begin{proposition}\label{prop:LWisNotMR}
            Let $\spatialdimensionality = 1$. The Lax-Wendroff scheme given by \eqref{eq:LaxWendroff} is obtained using the local reconstruction polynomial with coefficients given by \eqref{eq:systemReconstructionPolynomial} for $\predictionstencil = 1$ around the considered cell.
        For this reason, since the reconstruction polynomial is not used in a recursive manner, it is not a multiresolution scheme.

    \end{proposition}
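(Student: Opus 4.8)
The plan is to exhibit, for $\spatialdimensionality = 1$, the degree-two local reconstruction polynomial attached to the cell $\cellletter_{\levelletter, \indexletter}$, to advect it exactly by one finest-level stream and re-average it on $\cellletter_{\levelletter, \indexletter}$, and to check that the three resulting weights coincide term by term with those of \eqref{eq:LaxWendroff}. Throughout I work with a single velocity $\populationindex$ of unit dimensionless modulus $|\vectorial{\normalizedvelocityletter}_{\populationindex}|_2 = 1$ --- the case for which \eqref{eq:LaxWendroff} has velocity-independent coefficients --- and, after a translation, I place the centre $x_{\levelletter, \indexletter}$ at the origin, writing $h := \spacestep_{\levelletter} = 2^{\leveldifference}\spacestep$ for the local mesh size. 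With a positive velocity the three-point stencil $\cellletter_{\levelletter, \indexletter - 1}, \cellletter_{\levelletter, \indexletter}, \cellletter_{\levelletter, \indexletter + 1}$ occupies $[-3h/2, 3h/2]$.

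First I would solve \eqref{eq:systemReconstructionPolynomial} for $\predictionstencil = 1$. Writing $\pi^{\populationindex, \collided}_{\levelletter, \indexletter}(x) = a + bx + cx^2$ and using the elementary identity $\frac{1}{h}\int_{x_0 - h/2}^{x_0 + h/2}\pi\,\text{d}x = a + bx_0 + c(x_0^2 + h^2/12)$ evaluated at $x_0 = -h, 0, h$, the three constraints give
\begin{gather*}
b = \frac{\average{\distribution}^{\populationindex, \collided}_{\levelletter, \indexletter + 1} - \average{\distribution}^{\populationindex, \collided}_{\levelletter, \indexletter - 1}}{2h}, \qquad
c = \frac{\average{\distribution}^{\populationindex, \collided}_{\levelletter, \indexletter - 1} - 2\average{\distribution}^{\populationindex, \collided}_{\levelletter, \indexletter} + \average{\distribution}^{\populationindex, \collided}_{\levelletter, \indexletter + 1}}{2h^2}, \\
a = \average{\distribution}^{\populationindex, \collided}_{\levelletter, \indexletter} - c\,h^2/12.
\end{gather*}

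Next, the stream at level $\levelletter$ displaces the profile by one finest-level step $s := |\vectorial{\normalizedvelocityletter}_{\populationindex}|_2\spacestep = \spacestep$, so the advected-and-re-averaged value on $\cellletter_{\levelletter, \indexletter}$ is $\frac{1}{h}\int_{-h/2}^{h/2}\pi^{\populationindex, \collided}_{\levelletter, \indexletter}(x - s)\,\text{d}x = a - bs + c(s^2 + h^2/12)$, again by the same averaging identity at $x_0 = -s$. Substituting $a$ cancels the $h^2/12$ terms and leaves $\average{\distribution}^{\populationindex, \collided}_{\levelletter, \indexletter} - bs + cs^2$. Introducing the local Courant ratio $\sigma := s/h = 1/2^{\leveldifference}$ and inserting $b, c$, I obtain the coefficient $1 - \sigma^2$ on $\average{\distribution}^{\populationindex, \collided}_{\levelletter, \indexletter}$, the coefficient $\tfrac{\sigma}{2}(1 + \sigma)$ on the upwind neighbour $\average{\distribution}^{\populationindex, \collided}_{\levelletter, \indexletter - 1}$ and the coefficient $-\tfrac{\sigma}{2}(1 - \sigma)$ on $\average{\distribution}^{\populationindex, \collided}_{\levelletter, \indexletter + 1}$. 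Since $\sigma = 1/2^{\leveldifference}$ and $\sigma^2 = 1/4^{\leveldifference}$, these are exactly the coefficients of \eqref{eq:LaxWendroff}, once $\indexletter - 1$ and $\indexletter + 1$ are identified with $\indexletter \mp \vectorial{\normalizedvelocityletter}_{\populationindex}/|\vectorial{\normalizedvelocityletter}_{\populationindex}|_2$. The concluding statement then follows by inspection: only one polynomial reconstruction from \eqref{eq:systemReconstructionPolynomial} is used and the advection is carried out directly at level $\levelletter$, whereas the genuine multiresolution stream \eqref{eq:StreamPhase} relies on the operator $\reconstructed{~~~~}{ }{ }$, i.e.\ on the \emph{recursive} application of the prediction down to the finest level $\maxlevel$; no such recursion appears here.

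The computation itself is routine Gaussian elimination on a $3 \times 3$ system plus one elementary integral. The only genuine modelling point --- and the step I would treat most carefully --- is the correct interpretation of the ``stream at level $\levelletter$'' as a single exact translation by the finest step $\spacestep$ (hence local ratio $\sigma = 1/2^{\leveldifference}$) applied to the one local quadratic, rather than to the recursively reconstructed piece-wise constant profile. Getting this displacement and the upwind orientation right is what makes the three weights line up with \eqref{eq:LaxWendroff} and, simultaneously, what exhibits the absence of any recursive reconstruction.
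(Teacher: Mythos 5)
Your proposal is correct and follows essentially the same route as the paper's proof: solve \eqref{eq:systemReconstructionPolynomial} with $\predictionstencil = 1$ for the single local quadratic attached to the current cell, use it \emph{non-recursively} in the stream, and check that the three resulting weights coincide with \eqref{eq:LaxWendroff} for a unit-modulus velocity (the paper likewise illustrates with $\normalizedvelocityletter_{\populationindex} = 1$). The only difference is bookkeeping: you advect the quadratic by $\spacestep$ and re-average over the cell in a single integral, exploiting the cancellation of the $\spacestep_{\levelletter}^{2}/12$ terms, whereas the paper averages the same polynomial over the fine edge cells of $\mathcal{E}_{\levelletter, \indexletter}^{\populationindex}$ and $\mathcal{A}_{\levelletter, \indexletter}^{\populationindex}$ and forms the pseudo-flux difference of \eqref{eq:StreamPhase}; the two computations are equivalent.
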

        \begin{proof}
            Set $\predictionstencil = 1$. The time variable and the collision are understood in this proof not to overcharge the notations. After some computations, we obtain that the reconstruction polynomial comes under the form
            \begin{equation*}
                \pi_{\levelletter, \indexletter}^{\populationindex}(x) = \tilde{A}_{\levelletter, \indexletter}^{\populationindex, 2} \adaptiveroundbrackets{\frac{x - x_{\levelletter, \indexletter}}{\spacestep_{\levelletter}}}^2 + \tilde{A}_{\levelletter, \indexletter}^{\populationindex, 1} \adaptiveroundbrackets{\frac{x - x_{\levelletter, \indexletter}}{\spacestep_{\levelletter}}} + \tilde{A}_{\levelletter, \indexletter}^{\populationindex, 0},
            \end{equation*}
            with
            \begin{equation*}\left\lbrace
                \begin{aligned}
                    \tilde{A}_{\levelletter, \indexletter}^{\populationindex, 2} &= \tfrac{1}{2} \average{\distribution}_{\levelletter, \indexletter - 1}^{\populationindex} - \average{\distribution}_{\levelletter, \indexletter}^{\populationindex} + \tfrac{1}{2} \average{\distribution}_{\levelletter, \indexletter + 1}^{\populationindex}, \\
                    \tilde{A}_{\levelletter, \indexletter}^{\populationindex, 1} &= -\tfrac{1}{2} \average{\distribution}_{\levelletter, \indexletter - 1}^{\populationindex} + \tfrac{1}{2} \average{\distribution}_{\levelletter, \indexletter + 1}^{\populationindex}, \\
                    \tilde{A}_{\levelletter, \indexletter}^{\populationindex, 0} &= -\tfrac{1}{24} \average{\distribution}_{\levelletter, \indexletter - 1}^{\populationindex} + \tfrac{13}{12} \average{\distribution}_{\levelletter, \indexletter}^{\populationindex} - \tfrac{1}{24} \average{\distribution}_{\levelletter, \indexletter + 1}^{\populationindex}.
                \end{aligned}
                \right.
            \end{equation*}
            According to \eqref{eq:StreamPhase} but without employing the multiresolution reconstruction, we approximate
            \begin{equation*}
                \reconstructed{\average{\distribution}}{\populationindex}{\maxlevel, \overline{{\indexletter}}} \simeq \frac{1}{\spacestep} \int_{C_{\maxlevel, \overline{\indexletter}}} \pi_{\levelletter, \indexletter}^{\populationindex}(x)\text{d}x, \qquad \overline{\indexletter} \in \mathcal{E}_{\levelletter, {\indexletter}}^{\populationindex} \cup \mathcal{A}_{\levelletter, {\indexletter}}^{\populationindex}.
            \end{equation*}
            Now let $\overline{\indexletter} \in \mathcal{E}_{\levelletter, {\indexletter}}^{\populationindex} \cup \mathcal{A}_{\levelletter, {\indexletter}}^{\populationindex}$, this gives
            \begin{align*}
                \frac{1}{\spacestep} \int_{C_{\maxlevel, \overline{\indexletter}}} \pi_{\levelletter, \indexletter}^{\populationindex}(x)\text{d}x &= \frac{1}{\spacestep} \int_{x_{\maxlevel, \overline{\indexletter}} - \spacestep/2}^{x_{\maxlevel, \overline{\indexletter}} + \spacestep/2} \pi_{\levelletter, \indexletter}^{\populationindex}(x)\text{d}x \\
                &= 2^{\leveldifference} \int_{\frac{x_{\maxlevel, \overline{\indexletter}} - x_{\levelletter, \indexletter}}{\spacestep_{\levelletter}} - \frac{1}{2^{\leveldifference + 1}}}^{\frac{x_{\maxlevel, \overline{\indexletter}} - x_{\levelletter, \indexletter}}{\spacestep_{\levelletter}} + \frac{1}{2^{\leveldifference + 1}}} (\tilde{A}_{\levelletter, \indexletter}^{\populationindex, 2}\xi^2 + \tilde{A}_{\levelletter, \indexletter}^{\populationindex, 1} \xi + \tilde{A}_{\levelletter, \indexletter}^{\populationindex, 0}) \text{d}\xi, \\
                &= 2^{\leveldifference}  \left [ \tilde{A}_{\levelletter, \indexletter}^{\populationindex, 2}\frac{\xi^3}{3} + \tilde{A}_{\levelletter, \indexletter}^{\populationindex, 1} \frac{\xi^2}{2} + \tilde{A}_{\levelletter, \indexletter}^{\populationindex, 0} \xi \right ]_{\xi = \frac{x_{\maxlevel, \overline{\indexletter}} - x_{\levelletter, \indexletter}}{\spacestep_{\levelletter}} - \frac{1}{2^{\leveldifference + 1}}}^{\xi = \frac{x_{\maxlevel, \overline{\indexletter}} - x_{\levelletter, \indexletter}}{\spacestep_{\levelletter}} + \frac{1}{2^{\leveldifference + 1}}}, \\
                &= 2^{\leveldifference}  \tilde{A}_{\levelletter, \indexletter}^{\populationindex, 2} \left [ \adaptiveroundbrackets{\frac{x_{\maxlevel, \overline{\indexletter}} - x_{\levelletter, \indexletter}}{\spacestep_{\levelletter}}}^2 \frac{1}{2^{\leveldifference}} + \frac{1}{12}  \frac{1}{2^{3\leveldifference}} \right ] + \tilde{A}_{\levelletter, \indexletter}^{\populationindex, 1} \adaptiveroundbrackets{\frac{x_{\maxlevel, \overline{\indexletter}} - x_{\levelletter, \indexletter}}{\spacestep_{\levelletter}}}  + \tilde{A}_{\levelletter, \indexletter}^{\populationindex, 0}.
            \end{align*}
            Let us consider $\normalizedvelocityletter_{\populationindex} = 1$ to illustrate. In this case
            \begin{align*}
                \overline{k} \in \mathcal{E}_{\levelletter, {\indexletter}}^{\populationindex}, \qquad \frac{x_{\maxlevel, \overline{\indexletter}} - x_{\levelletter, \indexletter}}{\spacestep_{\levelletter}} &= - \frac{\spacestep_{\levelletter} + \spacestep}{2 \spacestep_{\levelletter}} = -\frac{1}{2}\adaptiveroundbrackets{1+\frac{1}{2^{\leveldifference}}}, \\
                \overline{k} \in \mathcal{A}_{\levelletter, {\indexletter}}^{\populationindex}, \qquad \frac{x_{\maxlevel, \overline{\indexletter}} - x_{\levelletter, \indexletter}}{\spacestep_{\levelletter}} &= \frac{\spacestep_{\levelletter} - \spacestep}{2 \spacestep_{\levelletter}} = \frac{1}{2}\adaptiveroundbrackets{1-\frac{1}{2^{\leveldifference}}}, 
            \end{align*}
            therefore 
            \begin{multline*}
                \sum_{\overline{\vectorial{\indexletter}} \in \mathcal{E}_{\levelletter, \vectorial{\indexletter}}^{\populationindex}} \reconstructed{\average{\distribution}}{\populationindex, \collided}{\maxlevel, \overline{\vectorial{\indexletter}}} - \sum_{\overline{\vectorial{\indexletter}} \in \mathcal{A}_{\levelletter, \vectorial{\indexletter}}^{\populationindex}} \reconstructed{\average{\distribution}}{\populationindex, \collided}{\maxlevel, \overline{\vectorial{\indexletter}}} \\
                = \tilde{A}_{\levelletter, \indexletter}^{\populationindex, 2} \left \{ \left [ -\frac{1}{2}\adaptiveroundbrackets{1+\frac{1}{2^{\leveldifference}}}\right ]^2 - \left [ \frac{1}{2}\adaptiveroundbrackets{1-\frac{1}{2^{\leveldifference}}}\right ]^2\right \} 
                + \frac{\tilde{A}_{\levelletter, \indexletter}^{\populationindex, 1}}{2} \left [ -\adaptiveroundbrackets{1+\frac{1}{2^{\leveldifference}}} - \adaptiveroundbrackets{1-\frac{1}{2^{\leveldifference}}} \right ] \\
                = \frac{\tilde{A}_{\levelletter, \indexletter}^{\populationindex, 2} }{2^{\leveldifference}} - \tilde{A}_{\levelletter, \indexletter}^{\populationindex, 1} = \frac{1}{2^{\leveldifference}} \adaptiveroundbrackets{\dfrac{1}{2} \average{\distribution}_{\levelletter, \indexletter - 1}^{\populationindex} - \average{\distribution}_{\levelletter, \indexletter}^{\populationindex} + \dfrac{1}{2} \average{\distribution}_{\levelletter, \indexletter + 1}^{\populationindex}} - \adaptiveroundbrackets{-\dfrac{1}{2} \average{\distribution}_{\levelletter, \indexletter - 1}^{\populationindex} + \dfrac{1}{2} \average{\distribution}_{\levelletter, \indexletter + 1}^{\populationindex}} \\
                = \frac{1}{2} \adaptiveroundbrackets{1 + \frac{1}{2^{\leveldifference}}} \average{\distribution}_{\levelletter, \indexletter - 1}^{\populationindex} -\frac{1}{2^{\leveldifference}} \average{\distribution}_{\levelletter, \indexletter}^{\populationindex} - \frac{1}{2} \adaptiveroundbrackets{1 - \frac{1}{2^{\leveldifference}}} \average{\distribution}_{\levelletter, \indexletter + 1}^{\populationindex},
            \end{multline*}
            which achieves the proof.
        \end{proof}

        One might ask whether it could be possible to see \eqref{eq:LaxWendroff} as a multiresolution scheme by using prediction operators different from \eqref{eq:PredictionOperator}.
        The path would be to consider prediction operators based on two values, which are employed in the point-wise multiresolution analysis \cite{harti1993discrete} but are not suitable to be used with volumetric representations. Indeed, in the symmetric case, the prediction operator would be
        \begin{equation*}
            \predicted{\average{\distribution}}{\populationindex}{\levelletter + 1, 2\indexletter + \delta} = \adaptiveroundbrackets{\frac{1}{2} + \frac{(-1)^{\delta}}{8}} \average{\distribution}_{\levelletter, \indexletter - 1}^{\populationindex} + \adaptiveroundbrackets{\frac{1}{2} - \frac{(-1)^{\delta}}{8}} \average{\distribution}_{\levelletter, \indexletter + 1}^{\populationindex},
        \end{equation*}
        which on one side has the unwanted property of being non-volume preserving: $(\predicted{\average{\distribution}}{\populationindex}{\levelletter + 1, 2\indexletter} + \predicted{\average{\distribution}}{\populationindex}{\levelletter + 1, 2\indexletter + 1})/2 = (\average{\distribution}_{\levelletter, \indexletter - 1}^{\populationindex} + \average{\distribution}_{\levelletter, \indexletter + 1}^{\populationindex})/2 \neq \average{\distribution}_{\levelletter, \indexletter}^{\populationindex}$
        and on the other side cannot generate the Lax-Wendroff scheme by its cascade application.
        On the other hand, if we imagine to use upwind prediction operators (thus using the value on the cell $\cellletter_{\levelletter, \indexletter}$), it can be shown that such operator degenerates into the Haar prediction $\predictionstencil = 0$, thus we cannot recover the Lax-Wendroff scheme neither.

\section{Equivalent equations analysis in 1D}\label{sec:Equivalent1D}

    We now enter the core section of the present contribution, where we make the link between the analysis of the lattice Boltzmann schemes \emph{via} the equivalent equations \cite{dubois2008equivalent} and our adaptive LBM-MR method \eqref{eq:CollisionPhase}, \eqref{eq:StreamPhase}, for $\spatialdimensionality = 1$.
    The aim is to find the maximum order of accuracy of our adaptive strategies according to the size of the prediction stencil $\predictionstencil$.
    As one might expect, the larger $\predictionstencil$, the better the behavior of the adaptive scheme. This high accuracy is here quantified in terms of numerical analysis.
    This analysis pertains to the way of performing the stream phase and does not take the different models for the collision phase into account.
    This assumption is rigorously justified as long as the equilibria are linear functions but we shall numerically verify that the results of the present study apply to more complicated non-linear situations.

    \subsection{Target expansion}
    
    As it is common in the volumetric context (see \cite{leveque2002finite}), we adopt the point of view of Finite Differences, where the discrete values are point-values at the cells centers $\vectorial{x}_{\levelletter, \vectorial{\indexletter}}$ rather that averages.
    This allows us to study the scheme from the point of view of the equivalent equations, by assuming that the underlying distribution functions are smooth in space and time.
    When considered the current cell is at the finest level of resolution $\maxlevel$, the stream phase  \eqref{eq:StreamPhase} coincides with \eqref{eq:referenceSchemeStream}, yielding what we call ``reference scheme''.
    This is simply a translation of the datum along the characteristics of the velocity field, since we are working at Courant number equal to one\footnote{Indeed, observe that $1/2^{\leveldifference}$ is the local Courant number at a given resolution $\levelletter$.}: $\average{\distribution}^{\populationindex}_{\maxlevel, \vectorial{\indexletter}} (t+\timestep) = \average{\distribution}^{\populationindex, \collided}_{\maxlevel, \vectorial{\indexletter} - \vectorial{\normalizedvelocityletter}_{\populationindex}} (t)$, which written in a Finite Difference formalism and assuming to deal with $\spatialdimensionality = 1$, reads
    \begin{equation*}
        \distribution^{\populationindex} (t + \timestep, x_{\maxlevel, \indexletter}) = \distribution^{\populationindex, \collided} (t, x_{\maxlevel, \indexletter - \normalizedvelocityletter_{\populationindex}}) =  \distribution^{\populationindex, \collided} (t, x_{\maxlevel, \indexletter} - \normalizedvelocityletter_{\populationindex} \spacestep).
    \end{equation*}
    Thus we can apply a Taylor expansion to both sides of the equation, yielding
    \begin{align}\label{eq:TaylorExpansion}
        \sum_{s = 0}^{+\infty} \frac{\timestep^s}{s!} \partial_{t}^s \distribution^{\populationindex}(t, x_{\maxlevel, \indexletter}) &=  \sum_{s = 0}^{+\infty} \frac{(-\normalizedvelocityletter_{\populationindex} \spacestep)^s}{s!} \partial_{x}^s \distribution^{\populationindex, \collided}(t, x_{\maxlevel, \indexletter}) \\
        &= \distribution^{\populationindex, \collided} - \normalizedvelocityletter_{\populationindex} \spacestep \partial_x \distribution^{\populationindex, \collided} + \frac{\normalizedvelocityletter_{\populationindex}^2 \spacestep^2}{2} \partial_{xx} \distribution^{\populationindex, \collided} - \frac{\normalizedvelocityletter_{\populationindex}^3 \spacestep^3}{6} \partial_{x}^3 \distribution^{\populationindex, \collided} + \dots, \nonumber
    \end{align}
    where the argument is omitted when understood.
    This expansion (possibly truncated) is the basic brick of the equivalent equations \cite{dubois2008equivalent}, which are used to study the consistency of the reference lattice Boltzmann scheme.
    In our analysis, we shall only be interested in the right hand side of \eqref{eq:TaylorExpansion}, since the left hand side is not affected by the adaptive grid\footnote{It would be affected for a level dependent time-step in the spirit of \cite{filippova1998grid}.} as we have considered a unique time step imposed by the finest resolution \eqref{eq:TimeStepDefinition}.
    We call the development on the right hand side of \eqref{eq:StreamPhase} ``target expansion''.
    Each power of $\spacestep$ has a different role in determining the physics approximated by the lattice Boltzmann scheme. 
    In particular:
    \begin{itemize}
        \item $- \normalizedvelocityletter_{\populationindex} \spacestep \partial_x \distribution^{\populationindex, \collided}$ is what we might call an ``inertial'' term, because it yields inertial contributions in the approximated model at leading order.
        \item $\frac{\normalizedvelocityletter_{\populationindex}^2 \spacestep^2}{2} \partial_{xx} \distribution^{\populationindex, \collided}$ might be called ``diffusive'' term since it results in dissipative terms in the approximated model at order $\spacestep$.
        \item $- \frac{\normalizedvelocityletter_{\populationindex}^3 \spacestep^3}{6} \partial_{x}^3 \distribution^{\populationindex, \collided}$ might be called ``dispersive'' term in analogy with Finite Differences. Its physical meaning is less clear than for the other terms but it can have a non-negligible impact on the stability of the lattice Boltzmann method.
    \end{itemize}

    Looking at \eqref{eq:StreamPhase}, it is not immediately clear how to apply the previous analysis, because the evolution of the solution is written in term of the reconstruction operator acting on the distributions.
    The connection with the previous expansion strategy shall be achieved using what we might call ``reconstruction flattening''.
    Intuitively, considering Remark \ref{rem:polynomialExactness}, our method is expected to show less discrepancies from the reference method as the width of the prediction stencil $\predictionstencil$ grows.
    
    \subsection{Reconstruction flattening}\label{sec:ReconstructionFlattening}
    
        \definecolor{blueingoingflux}{rgb}{0.,0.44705882352,0.69803921568}

        \begin{figure}[h]
        \begin{center}
            \def\svgwidth{1.\textwidth}
            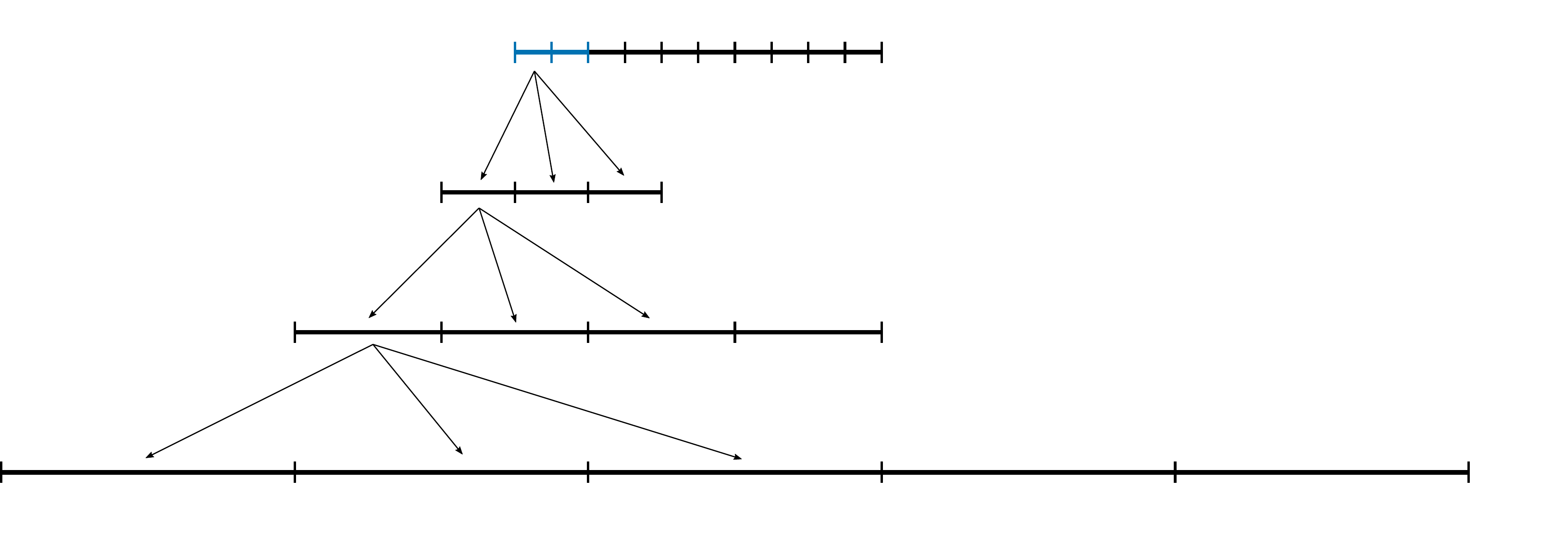
        \end{center}
        \caption{\label{fig:flattening}Example of flattening procedure with $\predictionstencil = 1$ for a velocity $\normalizedvelocityletter_{\populationindex} = 2$. The cells in blue correspond to those belonging to $\mathcal{E}_{\levelletter, \indexletter}^{\populationindex}$. The prediction operator is recursively applied (arrows) until reaching the level we are looking for, namely $\levelletter$.}
        \end{figure}
        
        In what follows, we assume that the stream stencil of the considered method implies at most two neighbors in each direction, this is $\max_{\populationindex} |\normalizedvelocityletter_{\populationindex}|\leq 2$.
        Therefore this analysis covers schemes such as the D1Q2 \cite{graille2014approximation, caetano2019result}, D1Q3 \cite{dubois2013stable} and D1Q5 \cite{bellotti2021sisc} schemes.
        However, the study can be extended to larger stencils upon considering sums in \eqref{eq:FlattenedStream} spanning on a larger set of integers.
        The generalization to $\predictionstencil \geq 2$ is achieved in the same manner.
        
        Flattening the reconstruction operator means that we are able to compute the set of weights $(C_{\leveldifference, m}^{\populationindex})_{m = -2}^{m = +2} \subset \mathbb{R}$ such that
        \begin{align}
            \average{\distribution}^{\populationindex}_{\levelletter, \indexletter} (t+\timestep) &= \average{\distribution}^{\populationindex, \collided}_{\levelletter, {\indexletter}} (t) + \frac{1}{2^{\leveldifference}} \adaptiveroundbrackets{\sum_{\overline{{\indexletter}} \in \mathcal{E}_{\levelletter, {\indexletter}}^{\populationindex}} \reconstructed{\average{\distribution}}{\populationindex, \collided}{\maxlevel, \overline{{\indexletter}}} (t) - \sum_{\overline{{\indexletter}} \in \mathcal{A}_{\levelletter, {\indexletter}}^{\populationindex}} \reconstructed{\average{\distribution}}{\populationindex, \collided}{\maxlevel, \overline{{\indexletter}}} (t)} 
            = \average{\distribution}^{\populationindex, \collided}_{\levelletter, \indexletter} (t) + \frac{1}{2^{\leveldifference}} \sum_{m = -2}^{+2} C_{\leveldifference, m}^{\populationindex} \average{\distribution}^{\populationindex, \collided}_{\levelletter, \indexletter + m} (t), \label{eq:FlattenedStream}
        \end{align}
        which corresponds to the illustration of Figure \ref{fig:flattening}.
        This means that we have condensed the computation of the total pseudo-flux at the finest level $\maxlevel$ as a weighted sum of values on five neighbors at the current level $\levelletter$.
        Recall Remark \ref{rem:UniformMesh}.
        \begin{remark}
            It is important to observe that both our multiresolution \eqref{eq:StreamPhase} and the Lax-Wendroff scheme \eqref{eq:LaxWendroff} can be put under the formalism introduced in \eqref{eq:FlattenedStream}.
        \end{remark}
        The fundamental advantage of this representation is that we can safely develop in Taylor series around the considered cell by adopting a Finite Difference point of view.
        This gives
        \begin{align}\label{eq:FlattenedExpansion}
            \sum_{s = 0}^{+\infty} \frac{\timestep^s}{s!} \partial_{t}^s \distribution^{\populationindex}(t, x_{\levelletter, \indexletter}) &= \distribution^{\populationindex, \collided}(t, x_{\levelletter, \indexletter}) + \sum_{s = 0}^{+\infty} \adaptiveroundbrackets{\frac{(\spacestep_{\levelletter})^s}{2^{\leveldifference}s!} \adaptiveroundbrackets{ \sum_{m = -2}^{+2}m^s  C_{\leveldifference, m}^{\populationindex}}  \partial_{x}^s \distribution^{\populationindex, \collided}(t, x_{\levelletter, \indexletter})}, \nonumber \\
            &= \distribution^{\populationindex, \collided}(t, x_{\levelletter, \indexletter}) + \sum_{s = 0}^{+\infty} \adaptiveroundbrackets{\frac{2^{\leveldifference(s-1)}(\spacestep)^s}{s!} \adaptiveroundbrackets{ \sum_{m = -2}^{+2}m^s  C_{\leveldifference, m}^{\populationindex}}  \partial_{x}^s \distribution^{\populationindex, \collided}(t, x_{\levelletter, \indexletter})}, \\
            &= \adaptiveroundbrackets{1 + \frac{1}{2^{\leveldifference}} \sum_{m = -2}^{+2} C_{\leveldifference, m}^{\populationindex}} \distribution^{\populationindex, \collided} + \adaptiveroundbrackets{\sum_{m = -2}^{+2} m C_{\leveldifference, m}^{\populationindex}} \spacestep \partial_x \distribution^{\populationindex, \collided} \nonumber \\
            & +  \adaptiveroundbrackets{2^{\leveldifference} \sum_{m = -2}^{+2} m^2 C_{\leveldifference, m}^{\populationindex}} \frac{\spacestep^2}{2} \partial_{xx} \distribution^{\populationindex, \collided} +  \adaptiveroundbrackets{2^{2\leveldifference} \sum_{m = -2}^{+2} m^3 C_{\leveldifference, m}^{\populationindex}} \frac{\spacestep^3}{6} \partial_{x}^3 \distribution^{\populationindex, \collided} + \dots \nonumber
        \end{align}
        where we have used that $\spacestep_{\levelletter} = 2^{\leveldifference} \spacestep$.
        The aim is to equate as most coefficients as possible between \eqref{eq:TaylorExpansion} and \eqref{eq:FlattenedExpansion} in order to have an adaptive scheme at the local level of refinement $\levelletter$ with approximated physics and stability conditions as close as possible to that of the reference scheme at level $\maxlevel$. We stress that these conditions are checked locally but we request them for any possible level.
        By doing the comparison term-by-term, we end up with the following definition.

        \begin{definition}[Match]
            Let $\spatialdimensionality = 1$. We say that the adaptive stream phase \eqref{eq:FlattenedStream} matches that of the reference scheme at order $s \in \mathbb{N}^{\star}$ whenever
        \begin{equation}\label{eq:MatchAtOrderS}
            \sum_{m = -2}^{+2} C_{\leveldifference, m}^{\populationindex} = 0, \qquad \sum_{m = -2}^{+2} m^p C_{\leveldifference, m}^{\populationindex} = \frac{(-\normalizedvelocityletter_{\populationindex})^p}{2^{\leveldifference(p-1)}}, \quad \text{for} \quad p = 1, \dots, s.
        \end{equation}
        regardless of the level $\levelletter$, thus for any $\leveldifference \geq 0$ and for every velocity $\populationindex = 0, \dots, \velocitynumber-1$.
        \end{definition} 
        With this definition in mind, we study how this property holds for the most common multiresolution schemes, namely $\predictionstencil = 0$ and $\predictionstencil = 1$ and for the Lax-Wendroff scheme.
        
        \subsection{Haar wavelet: $\predictionstencil = 0$}
            
            We start by analyzing the simplest scheme, generated by the Haar wavelet for $\predictionstencil = 0$.
            We provide the weights for the correspondent reconstruction flattening \eqref{eq:FlattenedStream} in the following statement
        
            \begin{proposition}[Match for $\predictionstencil = 0$]
                Let $\spatialdimensionality = 1$, $\predictionstencil = 0$ and $\leveldifference \geq 0$, then the flattened weights of the adaptive stream phase given by \eqref{eq:FlattenedStream} read
                \begin{equation*}
                    C_{\leveldifference, 0}^{\populationindex} = -|\normalizedvelocityletter_{\populationindex}|, \qquad C_{\leveldifference, -\normalizedvelocityletter_{\populationindex}/|\normalizedvelocityletter_{\populationindex}|}^{\populationindex} = |\normalizedvelocityletter_{\populationindex}|,
                \end{equation*}
                and those not listed are equal to zero.
                Therefore, the adaptive stream phase matches that of the reference scheme up to order $s = 1$.
                This also writes
                \begin{equation*}
                    \sum_{m = -2}^{+2} C_{\leveldifference, m}^{\populationindex} = 0, \qquad  \sum_{m = -2}^{+2} m C_{\leveldifference, m}^{\populationindex} = -\normalizedvelocityletter_{\populationindex}.
                \end{equation*}
            \end{proposition}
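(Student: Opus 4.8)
The plan is to exploit the fact that in the Haar case the reconstruction operator degenerates to a trivial piecewise-constant extension, after which the statement reduces to counting cells. First I would observe that for $\predictionstencil = 0$ the correction $Q_{1}^{\predictionstencil}$ in \eqref{eq:PredictionOperator} is an empty sum, so the prediction is the constant extension $\predicted{\average{\distribution}}{\populationindex}{\levelletter+1, 2\indexletter+\delta} = \average{\distribution}^{\populationindex}_{\levelletter, \indexletter}$ for $\delta = 0,1$. Applying this recursively $\leveldifference$ times down to the finest level $\maxlevel$, the reconstruction operator simply copies the coarse datum onto all of its descendants: for every fine index $\overline{\indexletter}$ one has $\reconstructed{\average{\distribution}}{\populationindex, \collided}{\maxlevel, \overline{\indexletter}}(t) = \average{\distribution}^{\populationindex, \collided}_{\levelletter, \lfloor \overline{\indexletter}/2^{\leveldifference} \rfloor}(t)$, i.e. each fine cell inherits the post-collision value of the unique coarse cell of level $\levelletter$ containing it.

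Next I would analyse the index sets $\mathcal{E}^{\populationindex}_{\levelletter, \indexletter}$ and $\mathcal{A}^{\populationindex}_{\levelletter, \indexletter}$ in one dimension. Since $\mathcal{B}_{\levelletter, \indexletter}$ is the contiguous block of $2^{\leveldifference}$ fine indices $\{\indexletter 2^{\leveldifference}, \dots, \indexletter 2^{\leveldifference} + 2^{\leveldifference} - 1\}$, shifting it by $\normalizedvelocityletter_{\populationindex}$ and taking the set differences produces, for $\normalizedvelocityletter_{\populationindex} > 0$, two blocks of exactly $|\normalizedvelocityletter_{\populationindex}|$ consecutive fine indices: $\mathcal{E}^{\populationindex}_{\levelletter, \indexletter}$ lying immediately to the left of $\mathcal{B}_{\levelletter, \indexletter}$ and $\mathcal{A}^{\populationindex}_{\levelletter, \indexletter}$ occupying its right end. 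Under the standing assumption $\max_{\populationindex} |\normalizedvelocityletter_{\populationindex}| \leq 2 \leq 2^{\leveldifference}$ (that is, for $\leveldifference \geq 1$), all indices of $\mathcal{E}^{\populationindex}_{\levelletter, \indexletter}$ fall in the single neighbouring coarse cell $\indexletter - \normalizedvelocityletter_{\populationindex}/|\normalizedvelocityletter_{\populationindex}|$ and all indices of $\mathcal{A}^{\populationindex}_{\levelletter, \indexletter}$ in $\cellletter_{\levelletter, \indexletter}$ itself; the case $\normalizedvelocityletter_{\populationindex} < 0$ is mirror-symmetric. The degenerate level $\leveldifference = 0$ is treated separately, as there the reconstruction is the identity and \eqref{eq:StreamPhase} is exactly the reference shift.

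Substituting the constant reconstruction of the first paragraph into the two sums of \eqref{eq:StreamPhase}, each collapses to $|\normalizedvelocityletter_{\populationindex}|$ identical terms,
\[
\sum_{\overline{\indexletter} \in \mathcal{E}^{\populationindex}_{\levelletter, \indexletter}} \reconstructed{\average{\distribution}}{\populationindex, \collided}{\maxlevel, \overline{\indexletter}} = |\normalizedvelocityletter_{\populationindex}|\, \average{\distribution}^{\populationindex, \collided}_{\levelletter, \indexletter - \normalizedvelocityletter_{\populationindex}/|\normalizedvelocityletter_{\populationindex}|}, \qquad \sum_{\overline{\indexletter} \in \mathcal{A}^{\populationindex}_{\levelletter, \indexletter}} \reconstructed{\average{\distribution}}{\populationindex, \collided}{\maxlevel, \overline{\indexletter}} = |\normalizedvelocityletter_{\populationindex}|\, \average{\distribution}^{\populationindex, \collided}_{\levelletter, \indexletter},
\]
so that comparison with the flattened form \eqref{eq:FlattenedStream} reads off exactly $C^{\populationindex}_{\leveldifference, -\normalizedvelocityletter_{\populationindex}/|\normalizedvelocityletter_{\populationindex}|} = |\normalizedvelocityletter_{\populationindex}|$ and $C^{\populationindex}_{\leveldifference, 0} = -|\normalizedvelocityletter_{\populationindex}|$, all other weights vanishing. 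The match statement then follows by checking \eqref{eq:MatchAtOrderS} directly: $\sum_m C^{\populationindex}_{\leveldifference, m} = 0$ and $\sum_m m\, C^{\populationindex}_{\leveldifference, m} = -\normalizedvelocityletter_{\populationindex}$ hold (order $s = 1$), whereas $\sum_m m^2 C^{\populationindex}_{\leveldifference, m} = |\normalizedvelocityletter_{\populationindex}|$ differs from the required $\normalizedvelocityletter_{\populationindex}^2 / 2^{\leveldifference}$ in general, confirming that the match does not reach order $2$.

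The step I expect to be the main obstacle is the combinatorial bookkeeping of the second paragraph, namely establishing that $\mathcal{E}^{\populationindex}_{\levelletter, \indexletter}$ and $\mathcal{A}^{\populationindex}_{\levelletter, \indexletter}$ each collapse onto a \emph{single} coarse neighbour. This hinges on the inequality $|\normalizedvelocityletter_{\populationindex}| \leq 2^{\leveldifference}$, which is guaranteed by the standing stencil assumption for every adaptive level $\leveldifference \geq 1$; the only borderline configuration is $\leveldifference = 0$ with $|\normalizedvelocityletter_{\populationindex}| = 2$, which is harmless since it merely reproduces the reference scheme and still satisfies the order-one match.
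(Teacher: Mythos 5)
Your proof is correct and takes essentially the same route as the paper's: the Haar reconstruction simply returns the value of the level-$\levelletter$ ancestor, the sets $\mathcal{E}_{\levelletter,\indexletter}^{\populationindex}$ and $\mathcal{A}_{\levelletter,\indexletter}^{\populationindex}$ each collapse onto a single coarse neighbour ($\indexletter - \normalizedvelocityletter_{\populationindex}/|\normalizedvelocityletter_{\populationindex}|$ and $\indexletter$ respectively), and the weights and the order-one match follow by direct computation, exactly as in the paper's case-by-case check for $\normalizedvelocityletter_{\populationindex} = 1, 2$. If anything, you are more careful than the paper: the proposition asserts the weight formula for all $\leveldifference \geq 0$, yet for $\leveldifference = 0$ with $|\normalizedvelocityletter_{\populationindex}| = 2$ the true weights are $C_{0,-2}^{\populationindex} = 1$ and $C_{0,0}^{\populationindex} = -1$ rather than $\pm|\normalizedvelocityletter_{\populationindex}|$ at positions $0$ and $-1$, a borderline case you isolate explicitly and correctly note to be harmless since the order-one match conclusion still holds there.
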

            
            \begin{proof}
                    Looking at \eqref{eq:PredictionOperator} with $\predictionstencil = 0$, each stage of the reconstruction operator acts by looking for the father of the cell we predict on. Overall, for a cell at the finest level, it returns the value on its ancestor at level $\levelletter$.
                    Therefore (consider positively moving velocity for the sake of presentation)
we have for $\normalizedvelocityletter_{\populationindex} = 1$, $\mathcal{E}_{\levelletter, \indexletter}^{\populationindex} = \{2^{\leveldifference}\indexletter-1\}$,  $\mathcal{A}_{\levelletter, \indexletter}^{\populationindex} = \{2^{\leveldifference}(\indexletter+1)-1\}$  and
                        \begin{equation*}
                            \reconstructed{\average{\distribution}}{\populationindex}{\maxlevel, 2^{\leveldifference}\indexletter - 1} = \average{\distribution}^{\populationindex}_{\levelletter, \indexletter - 1}, \qquad \reconstructed{\average{\distribution}}{\populationindex}{\maxlevel, 2^{\leveldifference}(\indexletter+1) - 1} = \average{\distribution}^{\populationindex}_{\levelletter, \indexletter}.
                        \end{equation*}
For the case $\normalizedvelocityletter_{\populationindex} = 2$, we have $\mathcal{E}_{\levelletter, \indexletter}^{\populationindex} = \{2^{\leveldifference}\indexletter-1, 2^{\leveldifference}\indexletter-2\}$,  $\mathcal{A}_{\levelletter, \indexletter}^{\populationindex} = \{2^{\leveldifference}(\indexletter+1)-1, 2^{\leveldifference}(\indexletter+1)-2\}$ and
                        \begin{gather*}
                            \reconstructed{\average{\distribution}}{\populationindex}{\maxlevel, 2^{\leveldifference}\indexletter - 1} = \average{\distribution}^{\populationindex}_{\levelletter, \indexletter - 1}, \qquad \reconstructed{\average{\distribution}}{\populationindex}{\maxlevel, 2^{\leveldifference}\indexletter - 2} = \average{\distribution}^{\populationindex}_{\levelletter, \indexletter - 1}, \\
                            \reconstructed{\average{\distribution}}{\populationindex}{\maxlevel, 2^{\leveldifference}(\indexletter+1) - 1} = \average{\distribution}^{\populationindex}_{\levelletter, \indexletter}, \qquad \reconstructed{\average{\distribution}}{\populationindex}{\maxlevel, 2^{\leveldifference}(\indexletter+1) - 2} = \average{\distribution}^{\populationindex}_{\levelletter, \indexletter}.
                        \end{gather*}
                    Plugging into \eqref{eq:FlattenedStream} completes the first part of the proof.
                    The second part is an immediate consequence of the first:
\begin{gather*}
                    \smash{\sum_{m = -2}^{+2}} C_{\leveldifference, m}^{\populationindex} = C_{\leveldifference, 0}^{\populationindex} + C_{\leveldifference, -\normalizedvelocityletter_{\populationindex}/|\normalizedvelocityletter_{\populationindex}|}^{\populationindex} = - |\normalizedvelocityletter_{\populationindex}|+ |\normalizedvelocityletter_{\populationindex}| = 0,\\
                        \smash[b]{\sum_{m = -2}^{+2}} mC_{\leveldifference, m}^{\populationindex} = 0 \times C_{\leveldifference, 0}^{\populationindex} -\normalizedvelocityletter_{\populationindex}/|\normalizedvelocityletter_{\populationindex}| \times C_{\leveldifference, -\normalizedvelocityletter_{\populationindex}/|\normalizedvelocityletter_{\populationindex}|}^{\populationindex} = -\normalizedvelocityletter_{\populationindex}/|\normalizedvelocityletter_{\populationindex}| \times |\normalizedvelocityletter_{\populationindex}| = - \normalizedvelocityletter_{\populationindex}
                    \end{gather*}
                \end{proof}

            On the other hand, the term diffusive term at $O(\spacestep^2)$ does not match that of the target expansion.
            This can be easily seen by taking $\normalizedvelocityletter_{\populationindex} = 1$.
            Then $2^{\leveldifference} \sum_{m = -2}^{m = +2} m^2 C_{\leveldifference, m}^{\populationindex} =  2^{\leveldifference} \neq 1$.
            This has a major consequence on the applicability of the method based on the Haar wavelet, because it correctly recovers the ``inertial'' terms but fails in correctly accounting for the ``dissipative'' terms, yielding a wrong diffusion structure with respect to the equations targeted by the reference method.
            The well-known D2Q9 scheme \cite{lallemand2000theory} for the incompressible Navier-Stokes system is one possible example of lattice Boltzmann scheme which would be deeply altered by the 2D generalization of this strategy.
            
        \subsection{Non-trivial wavelet: $\predictionstencil = 1$}
        
            Afterwards, we consider the case $\predictionstencil = 1$, which has been thoroughly investigated in \cite{bellotti2021sisc, bellotti2021multidimensional}.
            We are going to see that the limitations of the Haar case $\predictionstencil = 0$ can be solved by consider a larger prediction stencil: indeed -- for most of the applications -- employing $\predictionstencil = 1$ is sufficient.
            
            \begin{proposition}[Match for $\predictionstencil = 1$]\label{prop:CoefficientsGamma1}
                Let $\spatialdimensionality = 1$, $\predictionstencil = 1$ and $\leveldifference > 0$, then the flattened weights of the stream phase \eqref{eq:FlattenedStream} are given by the recurrence relations
                \begin{equation*}
                    \begin{pmatrix}
                                            C_{\leveldifference, -2}^{\populationindex} \\
                                            C_{\leveldifference, -1}^{\populationindex} \\
                                            C_{\leveldifference,  0}^{\populationindex} \\
                                            C_{\leveldifference,  1}^{\populationindex} \\
                                            C_{\leveldifference,  2}^{\populationindex}
                                           \end{pmatrix}
                = \underbrace{\begin{pmatrix}
                                        0 & -1/8 & 0 & 0 & 0 \\
                                        2 & 9/8 & 0 & -1/8 & 0 \\
                                        0 & 9/8 & 2 & 9/8 & 0 \\
                                        0 & -1/8 & 0 & 9/8 & 2 \\
                                        0 & 0 & 0 & -1/8 & 0
                                         \end{pmatrix}}_{=: \operatorial{P}}
                        \begin{pmatrix}
                                            C_{\leveldifference-1, -2}^{\populationindex} \\
                                            C_{\leveldifference-1, -1}^{\populationindex} \\
                                            C_{\leveldifference-1,  0}^{\populationindex} \\
                                            C_{\leveldifference-1,  1}^{\populationindex} \\
                                            C_{\leveldifference-1,  2}^{\populationindex}
                                           \end{pmatrix},
                \end{equation*}
                where the initialization is given by $C_{0, -\normalizedvelocityletter_{\populationindex}}^{\populationindex} = 1$ and $C_{0, 0}^{\populationindex} = -1$ and the remaining terms set to zero.
                Therefore, the adaptive stream phase matches that of the reference scheme up to order $s = 3$.
                This also writes
                \begin{gather*}
                    \sum_{m = -2}^{+2} C_{\leveldifference, m}^{\populationindex} = 0, \qquad  \sum_{m = -2}^{+2} m C_{\leveldifference, m}^{\populationindex} = -\normalizedvelocityletter_{\populationindex}, \qquad 
                    \sum_{m = -2}^{+2} m^2 C_{\leveldifference, m}^{\populationindex} =  \dfrac{\normalizedvelocityletter_{\populationindex}^2}{2^{\leveldifference}}, \qquad \sum_{m = -2}^{+2} m^3 C_{\leveldifference, m}^{\populationindex} = - \dfrac{\normalizedvelocityletter_{\populationindex}^3}{4^{\leveldifference}}.
                \end{gather*}
            \end{proposition}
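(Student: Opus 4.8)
The plan is to prove the two assertions in turn: first the recurrence $\operatorial{C}_{\leveldifference} = \operatorial{P}\,\operatorial{C}_{\leveldifference-1}$ for the flattened weight vector $\operatorial{C}_{\leveldifference} := (C_{\leveldifference, m}^{\populationindex})_{m=-2}^{+2}$, and then the four moment identities by induction on $\leveldifference$ driven by this recurrence. The convenient object to manipulate is the advect-and-average operator $\Psi_{\leveldifference}^{\populationindex}$ that reconstructs to level $\maxlevel$, shifts by $\normalizedvelocityletter_{\populationindex}$ finest-cells, and averages over the $2^{\leveldifference}$ finest cells of $\cellletter_{\levelletter,\indexletter}$: the whole right-hand side of \eqref{eq:StreamPhase} equals $\Psi_{\leveldifference}^{\populationindex}$ applied to the collided data, so that $\Psi_{\leveldifference}^{\populationindex}$ has weights $\delta_{m,0} + 2^{-\leveldifference} C_{\leveldifference,m}^{\populationindex}$ and it suffices to track these.

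For the recurrence the key is a one-stage decomposition. Since the reconstruction is the recursive application of \eqref{eq:PredictionOperator}, one stage carries the stored data from level $\levelletter$ to the predicted data at level $\levelletter+1$, after which a reconstruction with level difference $\leveldifference-1$ remains (this is exactly where $\leveldifference>0$ enters). Averaging over the two children of $\cellletter_{\levelletter,\indexletter}$ then gives $\Psi_{\leveldifference}^{\populationindex}[\average{\vectorial{\distribution}}_{\levelletter}]_{\indexletter} = \tfrac12\big(\Psi_{\leveldifference-1}^{\populationindex}[\predicted{\average{\vectorial{\distribution}}}{}{\levelletter+1}]_{2\indexletter} + \Psi_{\leveldifference-1}^{\populationindex}[\predicted{\average{\vectorial{\distribution}}}{}{\levelletter+1}]_{2\indexletter+1}\big)$. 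I would then substitute the $\predictionstencil=1$ prediction formula and collect the coefficient of each $\average{\distribution}_{\levelletter,\indexletter+r}^{\populationindex}$. The book-keeping simplifies because each offset $m$ contributes the pair $\predicted{\average{\distribution}}{\populationindex}{\levelletter+1,2\indexletter+m} + \predicted{\average{\distribution}}{\populationindex}{\levelletter+1,2\indexletter+m+1}$: for even $m$ these are the two children of a single coarse cell and collapse to $2\,\average{\distribution}_{\levelletter,\indexletter+m/2}^{\populationindex}$ by the volume-preserving property of the prediction (the same identity reproduces the additive $\average{\distribution}_{\levelletter,\indexletter}^{\populationindex}$ term), whereas for odd $m$ the pair straddles two coarse cells and produces the $9/8$ and $-1/8$ entries. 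Assembling the coefficients yields exactly $\operatorial{P}$ and, in particular, shows that the support stays within $\{-2,\dots,2\}$.

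For the moment identities, write $S_p(\leveldifference) := \sum_{m} m^p C_{\leveldifference,m}^{\populationindex}$. From $\operatorial{C}_{\leveldifference}=\operatorial{P}\,\operatorial{C}_{\leveldifference-1}$ one gets $S_p(\leveldifference) = \sum_m \big(\sum_r r^p \operatorial{P}_{r,m}\big) C_{\leveldifference-1,m}^{\populationindex}$, so the essential computation is that of the column moments $\mu_p(m):=\sum_r r^p \operatorial{P}_{r,m}$. I would verify directly that $\mu_p(m) = 2^{1-p} m^p$ for $p=0,1,2,3$, i.e.\ that the monomial row vectors $(m^p)_{m=-2}^{+2}$ are left eigenvectors of $\operatorial{P}$ with eigenvalues $2^{1-p}$. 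This gives the scalar recurrences $S_p(\leveldifference) = 2^{1-p} S_p(\leveldifference-1)$, which together with the initialization $C_{0,-\normalizedvelocityletter_{\populationindex}}^{\populationindex}=1$, $C_{0,0}^{\populationindex}=-1$ (so $S_0(0)=0$, $S_1(0)=-\normalizedvelocityletter_{\populationindex}$, $S_2(0)=\normalizedvelocityletter_{\populationindex}^2$, $S_3(0)=-\normalizedvelocityletter_{\populationindex}^3$) yield exactly the four claimed identities, hence a match up to $s=3$ in the sense of \eqref{eq:MatchAtOrderS}. Checking that $\mu_4(m) \neq 2^{-3}m^4$ confirms that the eigenvector property, and therefore the match, breaks at order $4$, so $s=3$ is sharp for $\predictionstencil=1$.

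The main obstacle I anticipate lies entirely in the first part: setting up the one-stage decomposition of $\Psi_{\leveldifference}^{\populationindex}$ correctly and tracking the left/right child prediction formulas together with their index shifts, so that the collapse for even offsets and the straddling contributions for odd offsets assemble into the precise entries of $\operatorial{P}$. Once the recurrence is in hand, the second part is a short induction resting on the eigenvector identities $\mu_p(m)=2^{1-p}m^p$, which are a finite and routine verification.
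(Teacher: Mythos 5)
Your proposal is correct and follows essentially the same route as the paper's proof: a one-stage decomposition of the level-$\levelletter$ stream through the two children at level $\levelletter+1$ carrying predicted data (which is exactly where $\leveldifference > 0$ is used), substitution of the $\predictionstencil = 1$ prediction formula to assemble $\operatorial{P}$, and then an induction on $\leveldifference$ driven by the column-moment identities $\sum_{r} r^p P_{r,m} = 2^{1-p} m^p$ for $p = 0,\dots,3$. Your even/odd pairing with volume preservation and the explicit left-eigenvector formulation are simply tidier book-keeping for the same ``tedious computations'' the paper performs directly with its $\tilde{C}$ coefficients, and the column-moment property you isolate is precisely what the paper later records as Lemma \ref{lemma:SumColumns}.
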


                \begin{proof}
                    The initialization trivially gives the scheme \eqref{eq:StreamPhase}. Assume to know the coefficients of the flattened advection for level $\levelletter + 1$, that is for $\leveldifference - 1$.
                    We have (we omit the time $t$ for the sake of compactness)
                    \begin{align*}
                        \sum_{\overline{\indexletter} \in \mathcal{E}_{\levelletter, \indexletter}^{\populationindex}} \reconstructed{\average{\distribution}}{\populationindex, \collided}{\maxlevel, \overline{\indexletter}} - \sum_{\overline{\indexletter} \in \mathcal{A}_{\levelletter, \indexletter}^{\populationindex}} \reconstructed{\average{\distribution}}{\populationindex}{\maxlevel, \overline{\indexletter}} &= \adaptiveroundbrackets{\sum_{\overline{\indexletter} \in \mathcal{E}_{\levelletter + 1, 2 \indexletter}^{\populationindex}} \reconstructed{\average{\distribution}}{\populationindex, \collided}{\maxlevel, \overline{\indexletter}} - \sum_{\overline{\indexletter} \in \mathcal{A}_{\levelletter + 1, 2\indexletter}^{\populationindex}} \reconstructed{\average{\distribution}}{\populationindex, \collided}{\maxlevel, \overline{\indexletter}} } + \adaptiveroundbrackets{\sum_{\overline{\indexletter} \in \mathcal{E}_{\levelletter + 1, 2 \indexletter + 1}^{\populationindex}} \reconstructed{\average{\distribution}}{\populationindex, \collided}{\maxlevel, \overline{\indexletter}} - \sum_{\overline{\indexletter} \in \mathcal{A}_{\levelletter + 1, 2\indexletter + 1}^{\populationindex}} \reconstructed{\average{\distribution}}{\populationindex, \collided}{\maxlevel, \overline{\indexletter}} }, \\
                        &= \sum_{m = -2}^{+2} C_{\leveldifference - 1, m}^{\populationindex} \predicted{\average{\distribution}}{\populationindex, \collided}{\levelletter + 1, 2\indexletter + m} + \sum_{m = -2}^{+2} C_{\leveldifference - 1, m}^{\populationindex} \predicted{\average{\distribution}}{\populationindex, \collided}{\levelletter + 1, 2\indexletter + 1 + m}, \\
                        &= \sum_{m = -2}^{+2} C_{\leveldifference - 1, m}^{\populationindex} \predicted{\average{\distribution}}{\populationindex, \collided}{\levelletter + 1, 2\indexletter + m} + \sum_{m = -1}^{+3} C_{\leveldifference - 1, m-1}^{\populationindex} \predicted{\average{\distribution}}{\populationindex, \collided}{\levelletter + 1, 2\indexletter + m} \\
                        &= \sum_{m = -2}^{+3} \tilde{C}_{\leveldifference - 1, m}^{\populationindex} \predicted{\average{\distribution}}{\populationindex, \collided}{\levelletter + 1, 2\indexletter + m},
                    \end{align*}
                    with 
                    \begin{equation*}
                        \tilde{C}_{\leveldifference - 1, m}^{\populationindex} = 
                            \begin{cases}
                                C_{\leveldifference - 1, -2}^{\populationindex}, \qquad &m = -2, \\
                                C_{\leveldifference - 1, m}^{\populationindex} + C_{\leveldifference - 1, m - 1}^{\populationindex}, \qquad &m = -1, 0, 1, 2, \\
                                C_{\leveldifference - 1, 2}^{\populationindex}, \qquad &m = 3.
                            \end{cases}
                    \end{equation*}
                    Using the prediction operator
                    \begin{align*}
                        \sum_{m = -2}^{+3} \tilde{C}_{\leveldifference - 1, m}^{\populationindex} \predicted{\average{\distribution}}{\populationindex, \collided}{\levelletter + 1, 2\indexletter + m} &= \tilde{C}_{\leveldifference - 1, -2}^{\populationindex} \adaptiveroundbrackets{\distribution_{\levelletter, \indexletter - 1} + \dfrac{1}{8}\distribution_{\levelletter, \indexletter - 2} - \dfrac{1}{8}\distribution_{\levelletter, \indexletter}} + \tilde{C}_{\leveldifference - 1, -1}^{\populationindex,} \adaptiveroundbrackets{\distribution_{\levelletter, \indexletter - 1} - \dfrac{1}{8}\distribution_{\levelletter, \indexletter - 2} + \dfrac{1}{8}\distribution_{\levelletter, \indexletter}} \\
                        &+ \tilde{C}_{\leveldifference - 1, 0}^{\populationindex} \adaptiveroundbrackets{\distribution_{\levelletter, \indexletter} + \dfrac{1}{8}\distribution_{\levelletter, \indexletter - 1} - \dfrac{1}{8}\distribution_{\levelletter, \indexletter+1}} + \tilde{C}_{\leveldifference - 1, 1}^{\populationindex} \adaptiveroundbrackets{\distribution_{\levelletter, \indexletter} - \dfrac{1}{8}\distribution_{\levelletter, \indexletter - 1} + \dfrac{1}{8}\distribution_{\levelletter, \indexletter+1}} \\
                        &+ \tilde{C}_{\leveldifference - 1, 2}^{\populationindex} \adaptiveroundbrackets{\distribution_{\levelletter, \indexletter + 1} + \dfrac{1}{8}\distribution_{\levelletter, \indexletter} - \dfrac{1}{8}\distribution_{\levelletter, \indexletter+2}} + \tilde{C}_{\leveldifference - 1, 3}^{\populationindex} \adaptiveroundbrackets{\distribution_{\levelletter, \indexletter + 1} - \dfrac{1}{8}\distribution_{\levelletter, \indexletter} + \dfrac{1}{8}\distribution_{\levelletter, \indexletter+2}},
                    \end{align*}
                    so that after tedious computations, we arrive at
                    \begin{align*}
                        \sum_{m = -2}^{+3} \tilde{C}_{\leveldifference - 1, m}^{\populationindex} \predicted{\average{\distribution}}{\populationindex, \collided}{\levelletter + 1, 2\indexletter + m} &= \adaptiveroundbrackets{-\dfrac{1}{8} C_{\leveldifference - 1, -1}^{\populationindex}} \average{\distribution}_{\levelletter, \indexletter - 2}^{\populationindex, \collided} + \adaptiveroundbrackets{2 C_{\leveldifference - 1, -2}^{\populationindex} + \dfrac{9}{8} C_{\leveldifference - 1, -1}^{\populationindex} - \dfrac{1}{8} C_{\leveldifference - 1, 1}^{\populationindex}}\average{\distribution}_{\levelletter, \indexletter - 1}^{\populationindex, \collided} \\
                        &+ \adaptiveroundbrackets{\dfrac{9}{8} C_{\leveldifference - 1, -1}^{\populationindex} + 2 C_{\leveldifference - 1, 0}^{\populationindex} + \dfrac{9}{8} C_{\leveldifference - 1, 1}^{\populationindex}} \average{\distribution}_{\levelletter, \indexletter}^{\populationindex, \collided} \\
                        &+ \adaptiveroundbrackets{- \dfrac{1}{8} C_{\leveldifference - 1, -1}^{\populationindex} + \dfrac{9}{8} C_{\leveldifference - 1, 1}^{\populationindex}  + 2 C_{\leveldifference - 1, 2}^{\populationindex} } \average{\distribution}_{\levelletter, \indexletter + 1}^{\populationindex, \collided} + \adaptiveroundbrackets{-\dfrac{1}{8} C_{\leveldifference - 1, 1}^{\populationindex}} \average{\distribution}_{\levelletter, \indexletter + 2}^{\populationindex, \collided},
                    \end{align*}
                    concluding the first part of the proof.
                    Then, let us proceed by recurrence: for $\leveldifference = 0$ the thesis trivially holds. Assume that it holds for $\leveldifference - 1$.
                    \begin{itemize}
                        \item $\sum_{m = -2}^{+2} C_{\leveldifference, m}^{\populationindex} = \dots = 2 \sum_{m = -2}^{+2} C_{\leveldifference - 1, m}^{\populationindex} = 0$.
                        \item  $\sum_{m = -2}^{+2} m C_{\leveldifference, m}^{\populationindex} = \dots =   \sum_{m = -2}^{+2} m C_{\leveldifference - 1, m}^{\populationindex} = -\normalizedvelocityletter_{\populationindex}$.
                        \item $\sum_{m = -2}^{+2} m^2 C_{\leveldifference, m}^{\populationindex} = \dots = \frac{1}{2}\sum_{m = -2}^{+2} m^2 C_{\leveldifference-1, m}^{\populationindex} = \frac{1}{2} \frac{\normalizedvelocityletter_{\populationindex}^2}{2^{\leveldifference - 1}} = \frac{\normalizedvelocityletter_{\populationindex}^2}{2^{\leveldifference}}$.
                        \item $\sum_{m = -2}^{+2} m^3 C_{\leveldifference, m}^{\populationindex} = \dots = \frac{1}{4}\sum_{m = -2}^{+2} m^3 C_{\leveldifference-1, m}^{\populationindex} = -\frac{1}{4} \frac{\normalizedvelocityletter_{\populationindex}^3}{4^{\leveldifference - 1}} = -\frac{\normalizedvelocityletter_{\populationindex}^3}{4^{\leveldifference}}$,
                    \end{itemize}
                    that concludes the proof.
                \end{proof}

            Again, we cannot go further in matching the target expansion \eqref{eq:StreamPhase}, considering for example $\normalizedvelocityletter_{\populationindex} = 1$, we have that $\sum_{m = -2}^{m = +2} m^4 C_{1, m}^{\populationindex} = -7/8 \neq 1/8$.
            This means that the method for $\predictionstencil = 1$ can be successfully employed in contexts where we want to control both the ``inertial'' and the ``diffusive'' physics, like in the D2Q9 scheme for the quasi-incompressible Navier-Stokes system \cite{lallemand2000theory}.
            Moreover, since we also match the target expansion at order 3, the achievements accomplished on the reference scheme at this order are also preserved by the adaptive scheme. 
            This is a highly desirable feature for scientists who have a good understanding of their reference scheme and who would like to employ our adaptive strategy as a black-box. Even if we are performing an asymptotic analysis, we argue that this feature reduces discrepancies in terms of stability compared to the reference method.
            
            \subsection{Lax-Wendroff scheme}
            To conclude on the one-dimensional analysis, let us analyze the consistency of the Lax-Wendroff scheme.
            \begin{proposition}[Match for Lax-Wendroff]
                Let $\spatialdimensionality = 1$ and $\leveldifference \geq 0$, then the flattened weights of the stream phase given by \eqref{eq:LaxWendroff} are given by
                \begin{equation}\label{eq:LaxWendroffCoefficients}
             C_{\leveldifference, 0}^{\populationindex} = -\frac{|\normalizedvelocityletter_{\populationindex}|^2}{2^{\leveldifference}}, \qquad C_{\leveldifference, -\normalizedvelocityletter_{\populationindex}/|\normalizedvelocityletter_{\populationindex}|}^{\populationindex} = \frac{|\normalizedvelocityletter_{\populationindex}|}{2}\adaptiveroundbrackets{1 + \frac{|\normalizedvelocityletter_{\populationindex}|}{2^{\leveldifference}}}, \qquad C_{\leveldifference, \normalizedvelocityletter_{\populationindex}/|\normalizedvelocityletter_{\populationindex}|}^{\populationindex} = -\frac{|\normalizedvelocityletter_{\populationindex}|}{2}\adaptiveroundbrackets{1 - \frac{|\normalizedvelocityletter_{\populationindex}|}{2^{\leveldifference}}}.
            \end{equation}
             Therefore, the adaptive stream phase matches that of the reference scheme up to order $s = 2$.
                This also writes
        \begin{equation*}
            \sum_{m = -2}^{+2} C_{\leveldifference, m}^{\populationindex} = 0, \qquad  \sum_{m = -2}^{+2} mC_{\leveldifference, m}^{\populationindex} = -\normalizedvelocityletter_{\populationindex}, \qquad  \sum_{m = -2}^{+2} m^2 C_{\leveldifference, m}^{\populationindex} = \frac{\normalizedvelocityletter_{\populationindex}^{2}}{2^{\leveldifference}},
        \end{equation*}
        \end{proposition}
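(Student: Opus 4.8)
The plan is a direct term-by-term identification, and the point to stress at the outset is that, in contrast with Proposition~\ref{prop:CoefficientsGamma1}, \emph{no recursion has to be unrolled}. As established in Proposition~\ref{prop:LWisNotMR}, the Lax-Wendroff update \eqref{eq:LaxWendroff} already lives entirely on the single level $\levelletter$ and couples the cell $\indexletter$ only to its two nearest neighbours $\indexletter\pm1$; so its flattened form \eqref{eq:FlattenedStream} is obtained by pure algebra, reading the coefficients straight off the stencil, rather than by iterating the prediction operator as in the genuinely multiresolution cases.

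First I would specialise to $\spatialdimensionality = 1$, where $|\vectorial{\normalizedvelocityletter}_{\populationindex}|_2 = |\normalizedvelocityletter_{\populationindex}|$ and the upwind direction $\vectorial{\normalizedvelocityletter}_{\populationindex}/|\vectorial{\normalizedvelocityletter}_{\populationindex}|_2$ collapses to the scalar sign $\sigma := \normalizedvelocityletter_{\populationindex}/|\normalizedvelocityletter_{\populationindex}| \in \{-1,+1\}$, so that \eqref{eq:LaxWendroff}, read with local Courant number $|\normalizedvelocityletter_{\populationindex}|/2^{\leveldifference}$ and offset $\sigma$, touches exactly the three nodes $\indexletter$, $\indexletter-\sigma$ and $\indexletter+\sigma$. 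To bring it into the canonical shape of \eqref{eq:FlattenedStream}, I would isolate the deviation of each weight from that of the trivial identity, i.e. subtract the bare $\average{\distribution}^{\populationindex,\collided}_{\levelletter,\indexletter}$ from the central term, and then rescale the whole identity by $2^{\leveldifference}$. The central coefficient then reads off as $C^{\populationindex}_{\leveldifference,0} = -|\normalizedvelocityletter_{\populationindex}|^2/2^{\leveldifference}$ and the two lateral ones as the announced $C^{\populationindex}_{\leveldifference,-\normalizedvelocityletter_{\populationindex}/|\normalizedvelocityletter_{\populationindex}|}$ and $C^{\populationindex}_{\leveldifference,\normalizedvelocityletter_{\populationindex}/|\normalizedvelocityletter_{\populationindex}|}$, every other $C^{\populationindex}_{\leveldifference,m}$ being zero; the only care needed here is the sign bookkeeping, using $\sigma|\normalizedvelocityletter_{\populationindex}| = \normalizedvelocityletter_{\populationindex}$ and $\sigma^2 = 1$.

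With the three weights in hand, the match statement is the direct evaluation of the moments in \eqref{eq:MatchAtOrderS}, each being a one-line substitution: for $p=0$ the term $-|\normalizedvelocityletter_{\populationindex}|^2/2^{\leveldifference}$ cancels against the two lateral contributions; for $p=1$ the two $|\normalizedvelocityletter_{\populationindex}|/2$ pieces add while the $|\normalizedvelocityletter_{\populationindex}|^2/2^{\leveldifference}$ pieces cancel, producing $-\sigma|\normalizedvelocityletter_{\populationindex}| = -\normalizedvelocityletter_{\populationindex}$; and for $p=2$ the roles reverse, leaving $|\normalizedvelocityletter_{\populationindex}|^2/2^{\leveldifference} = \normalizedvelocityletter_{\populationindex}^2/2^{\leveldifference}$. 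This establishes the match up to $s=2$.

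Since the whole computation is elementary, there is no real obstacle, only careful index and sign handling; the one genuinely informative observation — which I would flag to explain why the order is exactly two — is that on the three-point stencil $\{-1,0,+1\}$ one has $m^3 = m$, so $\sum_m m^3 C^{\populationindex}_{\leveldifference,m} = \sum_m m C^{\populationindex}_{\leveldifference,m} = -\normalizedvelocityletter_{\populationindex}$, whereas matching at order three would demand $-\normalizedvelocityletter_{\populationindex}^3/4^{\leveldifference}$. These coincide only for isolated $\leveldifference$, never uniformly, so the dispersive term is structurally unreachable: the narrow Lax-Wendroff stencil recovers the inertial and diffusive physics of the reference scheme but cannot capture the third-order contribution.
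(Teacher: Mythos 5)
Your proof is correct and follows essentially the same route as the paper, whose proof is simply ``direct inspection of \eqref{eq:LaxWendroff}'' for the coefficients and ``straightforward computations'' for the moment sums; you merely spell out those two steps, and your closing remark that $m^3=m$ on the stencil forces $\sum_m m^3 C^{\populationindex}_{\leveldifference,m}=-\normalizedvelocityletter_{\populationindex}\neq -\normalizedvelocityletter_{\populationindex}^3/4^{\leveldifference}$ reproduces exactly the paper's own observation (stated there as $-\normalizedvelocityletter_{\populationindex}^3/|\normalizedvelocityletter_{\populationindex}|^2$) following the proposition.
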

        \begin{proof}
            The first part of the claim is true by direct inspection of \eqref{eq:LaxWendroff}. The second part comes from the usual straightforward computations.
        \end{proof}

        However as expected from such a kind of scheme, the dispersive order $s=3$ is not matched, because $\sum_{m = 2}^{+2}m^{3} C_{\leveldifference, m}^{\populationindex} = -\normalizedvelocityletter_{\populationindex}^{3}/|\normalizedvelocityletter_{\populationindex}|^2 \neq - \normalizedvelocityletter_{\populationindex}^3/{4^{\leveldifference}}$.
        This is not trivial when considering Proposition \ref{prop:LWisNotMR}, because the stream phase is built using the same interpolation than the multiresolution scheme for $\predictionstencil = 1$.
        Still, what changes is the multiresolution approach employs the reconstruction operator with recursive application of the prediction operator, whereas the Lax-Wendroff scheme uses the polynomial interpolation only once.
        
        \subsection{Conclusions}
        In this section, we have seen that in the case of multiresolution scheme, the prediction stencil $\predictionstencil$ has to be taken large enough in order to match a sufficient number of desired orders. 
        In particular, the number of matched orders is equal to $2 \predictionstencil + 1$.
        Therefore, for most of the applications, $\predictionstencil = 0$ (Haar wavelet) is not enough, because it modifies the second-order terms which are frequently used to model diffusion phenomena.
        On the other hand, $\predictionstencil = 1$ is often sufficient for most of the applications and its reliability on the third-order terms is an interesting ``icing on the cake''.
        Finally, the Lax-Wendroff scheme matches sharply until order two as pointed out in \cite{fakhari2015numerics}, so it successfully handles diffusive terms but can lead to different stability constraints and oscillatory behaviors for the scheme, due to its intrinsic dispersive nature.

\section{Equivalent equations analysis in 2D}\label{sec:Equivalent2D}
    
    So far, we have analyzed our LBM-MR schemes with the help of the equivalent equations once the reconstruction flattening is done.
    In this section, we succinctly show how the previous analysis can be extended to the multidimensional case $\spatialdimensionality \geq 2$ by exploiting the tensorial product structure of the prediction operator, in order to simplify the computations.
    As for the previous section, the analysis holds as long as $\max_{\populationindex} |\vectorial{\normalizedvelocityletter}_{\populationindex}|_{\infty} \leq 2$.
    For the sake of presentation, we present the case $\spatialdimensionality = 2$.
    Therefore, the advection phase reads, in a Finite Differences formalism
    \begin{equation*}
        \distribution^{\populationindex} (t + \timestep, \vectorial{x}_{\maxlevel, \vectorial{\indexletter}}) = \distribution^{\populationindex, \collided} (t, \vectorial{x}_{\maxlevel, \vectorial{\indexletter} - \vectorial{\normalizedvelocityletter}_{\populationindex}}) =  \distribution^{\populationindex, \collided} (t, \vectorial{x}_{\maxlevel, \indexletter} - \vectorial{\normalizedvelocityletter}_{\populationindex} \spacestep),
    \end{equation*}
    thus a Taylor expansion -- assuming that we are allowed to commute derivatives along different axis by virtue of the Schwarz theorem -- yields
    \begin{align*}
        \sum_{s = 0}^{+\infty} \frac{\timestep^s}{s!} \partial_{t}^s \distribution^{\populationindex}(t, \vectorial{x}_{\maxlevel, \indexletter}) &= \sum_{s = 0}^{+\infty} \sum_{p = 0}^{+\infty} \frac{(\spacestep)^{s+p} (-\normalizedvelocityletter_{\populationindex, x})^s (-\normalizedvelocityletter_{\populationindex, y})^p}{s! ~ p!} \partial_x^{s} \partial_y^p \distribution^{\populationindex, \collided}(t, \vectorial{x}_{\maxlevel, \indexletter}), \\
        &= \distribution^{\populationindex, \collided} - \normalizedvelocityletter_{\populationindex, x} \spacestep \partial_x \distribution^{\populationindex, \collided}  - \normalizedvelocityletter_{\populationindex, y} \spacestep \partial_y \distribution^{\populationindex, \collided} \\ 
        &+ \frac{\normalizedvelocityletter_{\populationindex, x}^2 \spacestep^2}{2} \partial_{xx} \distribution^{\populationindex, \collided} + \normalizedvelocityletter_{\populationindex, x} \normalizedvelocityletter_{\populationindex, y} \spacestep^2 \partial_{xy} \distribution^{\populationindex, \collided} + \frac{\normalizedvelocityletter_{\populationindex, y}^2 \spacestep^2}{2} \partial_{yy} \distribution^{\populationindex, \collided}\\
        &- \frac{\normalizedvelocityletter_{\populationindex, x}^3 \spacestep^3}{6} \partial_{x}^3 \distribution^{\populationindex, \collided} - \frac{\normalizedvelocityletter_{\populationindex, x}^2 \normalizedvelocityletter_{\populationindex, y} \spacestep^3}{2} \partial_{xxy}\distribution^{\populationindex, \collided} - \frac{\normalizedvelocityletter_{\populationindex, x} \normalizedvelocityletter_{\populationindex, y}^2 \spacestep^3}{2} \partial_{xyy}\distribution^{\populationindex, \collided} - \frac{\normalizedvelocityletter_{\populationindex, y}^3 \spacestep^3}{6} \partial_{y}^3 \distribution^{\populationindex, \collided} \\
        &+ \dots
    \end{align*}
    This is the new target expansion analogous to \eqref{eq:TaylorExpansion}.
    Now, the reconstruction flattening reads
    \begin{equation}\label{eq:FlattenedStream2D}
        \average{\distribution}_{\levelletter, \vectorial{\indexletter}}^{\populationindex} (t+\timestep) = \average{\distribution}_{\levelletter, \vectorial{\indexletter}}^{\populationindex, \collided} (t) + \frac{1}{4^{\leveldifference}} \sum_{m = -2}^{+2} \sum_{n = -2}^{+2} C_{\leveldifference, m, n}^{\populationindex} \average{\distribution}_{\levelletter, \vectorial{\indexletter} + m \hat{\vectorial{\imath}} + n \hat{\vectorial{\jmath}}}^{\populationindex, \collided} (t),
    \end{equation}
    where $\hat{\vectorial{\imath}}$ and $\hat{\vectorial{\jmath}}$ are respectively the versor of the $x$ and $y$ Cartesian axis.
    Observe that the coefficients $(C_{\leveldifference, m, n}^{\populationindex})_{m, n=-2}^{m, n=+2}$ are not directly linked to their equivalents for $\spatialdimensionality = 1$. Nevertheless, the recurrence relations they satisfy are inherited from the one-dimensional case because of the construction of the prediction operator by tensor product.
    Expanding the flattened scheme as for $\spatialdimensionality = 1$ provides
    \begin{multline*}
        \sum_{s = 0}^{+\infty} \frac{\timestep^s}{s!} \partial_{t}^s \distribution^{\populationindex}(t, \vectorial{x}_{\maxlevel, \indexletter})  \\
        \shoveleft{= \distribution^{\populationindex, \collided}(t, \vectorial{x}_{\maxlevel, \indexletter}) + \sum_{s = 0}^{+\infty}\sum_{p = 0}^{+\infty} \adaptiveroundbrackets{\frac{2^{\leveldifference(s+p-2)}(\spacestep)^{s+p}}{s! ~ p!} \adaptiveroundbrackets{ \sum_{m = -2}^{+2} \sum_{n = -2}^{+2} m^s n^p  C_{\leveldifference, m, n}^{\populationindex}}  \partial_{x}^{s} \partial_y^p \distribution^{\populationindex, \collided}(t, \vectorial{x}_{\maxlevel, \indexletter})} }\\
        \shoveleft{= \adaptiveroundbrackets{1 + \frac{1}{4^{\leveldifference}} \sum_{m = -2}^{+2} \sum_{n = -2}^{+2} C_{\leveldifference, m, n}^{\populationindex}} \distribution^{\populationindex, \collided}}\\
        + \adaptiveroundbrackets{\frac{1}{2^{\leveldifference}}\sum_{m = -2}^{+2} \sum_{n = -2}^{+2} m C_{\leveldifference, m, n}^{\populationindex}} \spacestep \partial_x \distribution^{\populationindex, \collided} + \adaptiveroundbrackets{\frac{1}{2^{\leveldifference}} \sum_{m = -2}^{+2} \sum_{n = -2}^{+2} n C_{\leveldifference, m, n}^{\populationindex}} \spacestep \partial_y \distribution^{\populationindex, \collided} \\
        +  \adaptiveroundbrackets{ \sum_{m, n} m^2 C_{\leveldifference, m, n}^{\populationindex}} \frac{\spacestep^2}{2} \partial_{xx} \distribution^{\populationindex, \collided} +  \adaptiveroundbrackets{ \sum_{m,n} mn C_{\leveldifference, m, n}^{\populationindex}} \spacestep^2 \partial_{xy} \distribution^{\populationindex, \collided} +  \adaptiveroundbrackets{ \sum_{m, n} n^2 C_{\leveldifference, m, n}^{\populationindex}} \frac{\spacestep^2}{2} \partial_{yy} \distribution^{\populationindex, \collided}\\
        + \dots
    \end{multline*}
    Thus

    \begin{definition}[Match]
            Let $\spatialdimensionality = 2$. We say that the adaptive stream phase \eqref{eq:FlattenedStream2D} matches that of the reference scheme at order $s \in \mathbb{N}^{\star}$ whenever
        \begin{equation*}
            \sum_{m, n = -2}^{+2} C_{\leveldifference, m, n}^{\populationindex} = 0, \qquad \sum_{m, n = -2}^{+2} m^{p_x}n^{p_x} C_{\leveldifference, m, n}^{\populationindex} = \frac{(-\normalizedvelocityletter_{\populationindex, x})^{p_x} (-\normalizedvelocityletter_{\populationindex, y})^{p_y}}{2^{\leveldifference(p_x + p_y - 2)}}, \quad \text{for} \quad p_x, p_y = 1, \dots, s.
        \end{equation*}
        regardless of the level $\levelletter$, thus for any $\leveldifference \geq 0$ and for every velocity $\populationindex = 0, \dots, \velocitynumber-1$.
        \end{definition} 
        
    We focus on the case $\predictionstencil = 1$ and thus the matrix $\operatorial{P}$ is given by Proposition \ref{prop:CoefficientsGamma1}.
    Thanks to the construction of the prediction operator as tensorial product, introducing the following order for the coefficients $\vectorial{C}_{\leveldifference}^{\populationindex} := (C_{\leveldifference, -2, -2}^{\populationindex}, C_{\leveldifference, -1, -2}^{\populationindex}, \dots, C_{\leveldifference, 2, -2}^{\populationindex}, C_{\leveldifference, -2, -1}^{\populationindex}, \dots, C_{\leveldifference, 2, -1}^{\populationindex}, \dots, C_{\leveldifference, 2, 2}^{\populationindex})^T  \in \mathbb{R}^{25}$.
    Inside this vector, the element $C_{\leveldifference, m, n}^{\populationindex}$ has place $5(n+2)+(m+2)$ (from now on the indices of vectors and matrices start from zero).
    We obtain that
    \begin{equation*}
        \vectorial{C}_{\leveldifference}^{\populationindex} = (\operatorial{P} \otimes \operatorial{P}) \vectorial{C}_{\leveldifference - 1}^{\populationindex}.
    \end{equation*}
    Taking $\operatorial{A} \in \mathbb{R}^{m \times n}$ and  $\operatorial{B} \in \mathbb{R}^{p \times q}$, we have that $(\operatorial{A}\otimes \operatorial{B})_{i,j} = A_{i \sslash p, j \sslash q} \times B_{i \% p, j \% q}$, where $\sslash$ denotes the integer division and $\%$ the reminder of such division.
    Therefore
    \begin{equation*}
        (\operatorial{P} \otimes \operatorial{P})_{i, j} = P_{i \sslash 5, j \sslash 5} \times P_{i \% 5, j \% 5}.
    \end{equation*}
    
    We shall make use of the following Lemma
    \begin{lemma}\label{lemma:SumColumns}
        Let $j = 0, \dots, 4$, then $\sum_{m=-2}^{m=+2}m^s P_{m+2, j} = 2^{1-s}(j-2)^s$ for $s = 0, 1, 2, 3$.
    \end{lemma}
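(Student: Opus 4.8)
The plan is to read the left-hand side $\sum_{m=-2}^{+2} m^s P_{m+2,j}$ as the $s$-th discrete moment of the $j$-th column of $\operatorial{P}$, and to observe that such moments obey a single scaling rule under $\operatorial{P}$. For a generic weight vector $\vectorial{C} = (C_m)_{m=-2}^{+2}$, with $C_m$ stored in position $m+2$, set $\mu_s(\vectorial{C}) := \sum_{m=-2}^{+2} m^s C_m$. The $j$-th column of $\operatorial{P}$ is $\operatorial{P}\vectorial{e}_j$ for the canonical basis vector $\vectorial{e}_j$ (indices running from $0$ to $4$), and $\mu_s(\vectorial{e}_j) = (j-2)^s$, so the Lemma is precisely $\mu_s(\operatorial{P}\vectorial{e}_j) = 2^{1-s}\mu_s(\vectorial{e}_j)$. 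By linearity this is equivalent to the moment-contraction identity $\mu_s(\operatorial{P}\vectorial{C}) = 2^{1-s}\mu_s(\vectorial{C})$ for every $\vectorial{C}$ and $s = 0,1,2,3$, which is exactly the scaling by $2, 1, \tfrac{1}{2}, \tfrac{1}{4}$ already produced in the recurrence step of the proof of Proposition \ref{prop:CoefficientsGamma1}.

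Concretely, the proof then reduces to a finite check over the five columns of $\operatorial{P}$ and the four exponents $s = 0, 1, 2, 3$. First I would dispatch the three columns with a single nonzero entry, $j = 0, 2, 4$, whose only weight equals $2$ and sits at $m_\star = -1, 0, 1$ respectively; since $j - 2 = 2 m_\star$ in each of these cases, $2^{1-s}(j-2)^s = 2\,m_\star^s$ matches the single surviving term $\sum_m m^s P_{m+2,j} = 2\,m_\star^s$ at once (with the convention $0^0 = 1$ for $j = 2$). The two remaining columns $j = 1, 3$ carry the prediction weights $\pm\tfrac{1}{8}$ and $\tfrac{9}{8}$, and a direct evaluation — identical to the arithmetic behind the one-dimensional moment sums of Proposition \ref{prop:CoefficientsGamma1} — gives moments $2^{1-s}(-1)^s$ and $2^{1-s}(1)^s$, which are $2^{1-s}(j-2)^s$ for $j = 1$ and $j = 3$.

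There is no genuine obstacle: the statement is a short linear-algebra verification, and the only care required is bookkeeping. I would keep the zero-based indexing consistent (the weight $m^s$ pairs with row $m+2$) and use $0^0 = 1$ so that the cases $s = 0$ and $m = 0$ are treated correctly. I would finally note that the contraction stops exactly at $s = 3$: for example $\sum_m m^4 P_{m+2,1} = -1 \neq \tfrac{1}{8} = 2^{1-4}(1-2)^4$, which is the structural reason why the two-dimensional match, like its one-dimensional counterpart, cannot exceed order three.
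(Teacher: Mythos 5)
Your proof is correct and follows essentially the same route as the paper, whose proof is precisely a direct computation on the columns of $\operatorial{P}$ analogous to the moment sums at the end of Proposition \ref{prop:CoefficientsGamma1}; your linearity remark (that the Lemma is the column-wise form of the contraction $\mu_s(\operatorial{P}\vectorial{C}) = 2^{1-s}\mu_s(\vectorial{C})$ used in that recurrence) makes this connection explicit but does not change the underlying verification. The column-by-column arithmetic, the $0^0=1$ convention, and the failure at $s=4$ all check out.
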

    \begin{proof}
        Use direct computations on $\operatorial{P}$ analogous to those at the end of Proposition \ref{prop:CoefficientsGamma1}.
    \end{proof}

    \begin{proposition}[Match for $\predictionstencil = 1$]
        Let $\spatialdimensionality = 2$ and $\predictionstencil = 1$, then the adaptive stream phase matches that of the reference scheme at order $s = 3$.
    \end{proposition}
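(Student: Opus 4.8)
The plan is to prove the matching conditions by induction on the level gap $\leveldifference$, reducing the two-dimensional estimate to the one-dimensional Lemma~\ref{lemma:SumColumns} through the tensor-product structure. First I would introduce the discrete moments
\[
    M_{\leveldifference}^{p, r} := \sum_{m, n = -2}^{+2} m^{p} n^{r} C_{\leveldifference, m, n}^{\populationindex},
\]
so that matching at order $s = 3$ amounts exactly to the statements $M_{\leveldifference}^{0,0} = 0$ and $M_{\leveldifference}^{p, r} = (-\normalizedvelocityletter_{\populationindex, x})^{p} (-\normalizedvelocityletter_{\populationindex, y})^{r} / 2^{\leveldifference(p + r - 2)}$ for every $(p, r) \neq (0,0)$ with $p + r \leq 3$.

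The central step is to convert the Kronecker recurrence $\vectorial{C}_{\leveldifference}^{\populationindex} = (\operatorial{P} \otimes \operatorial{P}) \vectorial{C}_{\leveldifference - 1}^{\populationindex}$ into a scalar recurrence for the moments. Using the index map $C_{\leveldifference, m, n}^{\populationindex} = [\vectorial{C}_{\leveldifference}^{\populationindex}]_{5(n+2)+(m+2)}$ together with the entrywise identity $(\operatorial{P} \otimes \operatorial{P})_{i, j} = P_{i \sslash 5, j \sslash 5}\, P_{i \% 5, j \% 5}$, I would read off
\[
    C_{\leveldifference, m, n}^{\populationindex} = \sum_{m', n' = -2}^{+2} P_{m+2, m'+2}\, P_{n+2, n'+2}\, C_{\leveldifference - 1, m', n'}^{\populationindex}.
\]
Substituting this into $M_{\leveldifference}^{p, r}$ and exchanging the order of summation, the double sum over $(m,n)$ factors into a product of two independent one-dimensional sums,
\[
    M_{\leveldifference}^{p, r} = \sum_{m', n' = -2}^{+2} C_{\leveldifference-1, m', n'}^{\populationindex} \adaptiveroundbrackets{\sum_{m=-2}^{+2} m^{p} P_{m+2, m'+2}} \adaptiveroundbrackets{\sum_{n=-2}^{+2} n^{r} P_{n+2, n'+2}}.
\]
Each inner factor is precisely the object handled by Lemma~\ref{lemma:SumColumns} (taking $j = m'+2$ and $j = n'+2$), giving $2^{1-p}(m')^{p}$ and $2^{1-r}(n')^{r}$ respectively. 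Collecting the powers of two yields the clean multiplicative recurrence
\[
    M_{\leveldifference}^{p, r} = 2^{\,2 - p - r}\, M_{\leveldifference - 1}^{p, r}, \qquad p, r \in \{0, 1, 2, 3\},
\]
the admissible range of $p$ and $r$ being dictated by the validity of the Lemma.

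It then remains to fix the base case. At $\leveldifference = 0$ the flattened stream phase \eqref{eq:FlattenedStream2D} must reduce to the pure translation \eqref{eq:referenceSchemeStream}, which forces $C_{0, 0, 0}^{\populationindex} = -1$, $C_{0, -\normalizedvelocityletter_{\populationindex, x}, -\normalizedvelocityletter_{\populationindex, y}}^{\populationindex} = 1$, and all remaining initial weights to vanish. Since $0^{p} 0^{r} = 0$ whenever $(p, r) \neq (0,0)$, this gives $M_{0}^{0,0} = 0$ and $M_{0}^{p, r} = (-\normalizedvelocityletter_{\populationindex, x})^{p} (-\normalizedvelocityletter_{\populationindex, y})^{r}$ otherwise. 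Iterating the multiplicative recurrence then produces $M_{\leveldifference}^{p, r} = 2^{(2 - p - r)\leveldifference}\, M_{0}^{p, r}$, which is the claimed value for $(p,r) \neq (0,0)$ and remains zero for $(p,r) = (0,0)$; as this covers all pairs with $p + r \leq 3$, the scheme matches the reference one at order $s = 3$.

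The only genuinely delicate point is the factorization: one has to recognise that the tensor-product form of the prediction operator decouples the two Cartesian directions inside $M_{\leveldifference}^{p, r}$, so that the entire two-dimensional computation collapses onto two copies of the one-dimensional Lemma~\ref{lemma:SumColumns}. Once this decoupling is identified, the Kronecker index bookkeeping and the induction on $\leveldifference$ are both routine, exactly mirroring the scalar recurrence established in Proposition~\ref{prop:CoefficientsGamma1}.
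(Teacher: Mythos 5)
Your proof is correct and follows essentially the same route as the paper's: induction on $\leveldifference$, unfolding the Kronecker recurrence $\vectorial{C}_{\leveldifference}^{\populationindex} = (\operatorial{P} \otimes \operatorial{P}) \vectorial{C}_{\leveldifference - 1}^{\populationindex}$ componentwise so that the 2D moment sum factors into two one-dimensional sums, each handled by Lemma~\ref{lemma:SumColumns}, with the base case given by the pure translation. Your only (cosmetic) difference is packaging the computation as a clean multiplicative recurrence $M_{\leveldifference}^{p,r} = 2^{2-p-r} M_{\leveldifference-1}^{p,r}$ that you then iterate, whereas the paper carries out the same factorization inline within the induction step.
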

    \begin{proof}
     Let $s, p = 0, \dots, 3$ and proceed recursively. For $\leveldifference = 0$ the thesis holds because we check that $\sum_{m = -2}^{m=+2} \sum_{n = -2}^{n=+2} m^s n^p C_{0, m, n}^{\populationindex} = (-\normalizedvelocityletter_{\populationindex, x})^s (-\normalizedvelocityletter_{\populationindex, y})^p$. 
     Now assume that the thesis holds for $\leveldifference - 1$, thus that 
     \begin{equation*}
        \sum_{m, n = -2}^{+2} m^s n^p C_{\leveldifference, m, n}^{\populationindex} = \frac{(-\normalizedvelocityletter_{\populationindex, x})^s (-\normalizedvelocityletter_{\populationindex, y})^p}{2^{(\leveldifference -1)(s + p - 2)}}.
     \end{equation*}
     Then
     \begin{multline*}
            \sum_{m = -2}^{+2} \sum_{n = -2}^{+2} m^s n^p C_{\leveldifference, m, n}^{\populationindex} = \sum_{m = -2}^{+2} \sum_{n = -2}^{+2} m^s n^p (\operatorial{P} \otimes \operatorial{P})_{5(n+2)+(m+2), \cdot} \cdot \vectorial{C}_{\leveldifference - 1}^{\populationindex}, \\
            = \sum_{m = -2}^{+2} \sum_{n = -2}^{+2} \sum_{j = 0}^{24} m^s n^p (\operatorial{P} \otimes \operatorial{P})_{5(n+2)+(m+2), j} \times C_{\leveldifference - 1, j \% 5 - 2, j \sslash 5 - 2}^{\populationindex}, \\
            = \sum_{m = -2}^{+2} \sum_{n = -2}^{+2} \sum_{j = 0}^{24} m^s n^p P_{[5(n+2)+(m+2)] \sslash 5, j \sslash 5} \times P_{[5(n+2)+(m+2)] \% 5, j \% 5} \times C_{\leveldifference - 1, j \% 5 - 2, j \sslash 5 - 2}^{\populationindex}.
        \end{multline*}
    We observe that for $m, n = -2, \dots, 2$ we have $[5(n+2)+(m+2)] \sslash 5 = n + 2$ and $[5(n+1)+(m+2)] \% = m+2$, so we can rewrite
    \begin{align*}
            \sum_{m = -2}^{+2} \sum_{n = -2}^{+2} m^s n^p C_{\leveldifference, m, n}^{\populationindex} &= \sum_{m = -2}^{+2} \sum_{n = -2}^{+2} \sum_{j = 0}^{24} m^s n^p P_{n+2, j \sslash 5} \times P_{m+2, j \% 5} \times C_{\leveldifference - 1, j \% 5 - 2, j \sslash 5 - 2}^{\populationindex}, \\
            &= \sum_{j = 0}^{24} C_{\leveldifference - 1, j \% 5 - 2, j \sslash 5 - 2}^{\populationindex} \times \adaptiveroundbrackets{\sum_{m = -2}^{+2} m^s P_{m+2, j \% 5}} \times  \adaptiveroundbrackets{\sum_{n = -2}^{+2} n^p P_{n+2, j \sslash 5}}, \\
            &= \sum_{j = 0}^{24} C_{\leveldifference - 1, j \% 5 - 2, j \sslash 5 - 2}^{\populationindex} \times \frac{1}{2^{s-1}} (j \% 5 - 2)^s \times  \frac{1}{2^{p-1}} (j \sslash 5 - 2)^p, \\
            &= \frac{1}{2^{s+p-2}}\sum_{j = 0}^{24} (j \% 5 - 2)^s (j \sslash 5 - 2)^p C_{\leveldifference - 1, j \% 5 - 2, j \sslash 5 - 2}^{\populationindex} , \\
            &= \frac{1}{2^{s+p-2}} \sum_{m = -2}^{2}\sum_{n = -2}^{2} m^s n^p C_{\leveldifference - 1, m, n}^{\populationindex} = \frac{(-\normalizedvelocityletter_{\populationindex, x})^s (-\normalizedvelocityletter_{\populationindex, y})^p}{2^{\leveldifference(s + p - 2)}}
        \end{align*}
        where we used Lemma \ref{lemma:SumColumns}, we performed a change of indices and used the recurrence hypothesis.
    \end{proof}
    
    \begin{remark}
        It is interesting to notice that the method constructed by tensor product matches crossed terms until order $9$ (take $s = p = 3$) but cannot match pure $x$ or $y$ terms to orders larger than $3$.
    \end{remark}
    
    To summarize, this shows how to generalize the study of the adaptive LBM-MR scheme when $\spatialdimensionality \geq 2$ to recover the same results than the study for $\spatialdimensionality = 1$. Again, for most of the applications, one needs to consider $\predictionstencil \geq 1$.

\section{Numerical simulations}\label{sec:NumericalSimulations}

    In the previous sections, the expansions we have performed were formal and valid for smooth solutions in the limit of small $\spacestep_{\levelletter}$ for any considered level $\levelletter$.
    The aim of the following numerical simulations is to assess the previous approach by showing that it provides a useful tool to \emph{a priori} study the behavior of the adaptive scheme.
    The following stream schemes are considered:
    \begin{itemize}
        \item the multiresolution scheme for $\predictionstencil = 0$ given by \eqref{eq:StreamPhase}, which matches the reference scheme up to first order;
        \item the multiresolution scheme for $\predictionstencil = 1$ given by \eqref{eq:StreamPhase}, which matches the reference scheme up to third order;
        \item the Lax-Wendroff-like scheme \eqref{eq:LaxWendroff} proposed by \cite{fakhari2014finite}, which matches the reference scheme up to second order.
    \end{itemize}

    \begin{table}[h]\caption{\label{tab:TestCasesResumee}Summary of the test cases for the analysis of the stream phase without use of mesh adaptation within the  LBM-MR-LC framework.}
        \centering
            \begin{footnotesize}
            \begin{tabular}{ccccc}
            Number and Section & $\spatialdimensionality$ & Equation & Ref. scheme & Configuration \\
            \toprule[2pt]
            1 - \ref{sec:LinearAdvection} & 1 & Linear advection eq. & D1Q2 & Fixed $\maxlevel-\minlevel$, increasing $\maxlevel$ \\
                &&$\partial_t u + \partial_x (V u) = 0$&&\\
            \midrule
            2 - \ref{sec:LinearAdvectionDiffusion} & 1 & Linear advection-diffusion eq. & D1Q3 & Fixed $\maxlevel$, decreasing $\minlevel$ \\
            &&$\partial_t u + \partial_x (V u) - \nu \partial_{xx}u = 0$&&\\
            \midrule
            3 - \ref{sec:ViscousBurgersEquation} & 1 & Viscous Burgers eq. & D1Q3 & Fixed $\maxlevel$, decreasing $\minlevel$ \\
            &&$\partial_t u + \partial_x (u^2/2) - \nu \partial_{xx}u = 0$&&\\
            \midrule
            4 - \ref{sec:2DAdvectionDiffusionEquation} & 2 & Linear advection-diffusion eq. & D2Q9 & Fixed $\maxlevel$, decreasing $\minlevel$ \\
            &&$\partial_t u + \nabla \cdot (\vectorial{V} u) - \nu \Delta u = 0$&&\\
            \bottomrule
            \end{tabular}
            \end{footnotesize}
    \end{table}
    
    A first batch of tests (spanning Sections \ref{sec:LinearAdvection}, \ref{sec:LinearAdvectionDiffusion}, \ref{sec:ViscousBurgersEquation} and \ref{sec:2DAdvectionDiffusionEquation}) is performed on uniform meshes which are corsened until reaching the lowest authorized level $\minlevel$, where the adaptive scheme is utilized. Indeed, the main focus of this work is not to evaluate the quality of the grid adaptation with respect to the parameter $\epsilon$. This has been the subject of our previous papers \cite{bellotti2021sisc} and \cite{bellotti2021multidimensional}.
    Basides the fact that this setting is relevant according to Remark \ref{rem:UniformMesh} and because the match properties hold uniformly in $\levelletter$, it also provides a worst case scenario to undoubtedly prove the resilience of our numerical strategy. This could be the case when one performs mesh adaptation yet selecting a very large threshold $\epsilon$, so to that the smoothness of the solution allows to coarsen the grid everywhere.
    As a matter of fact, similar scenarios can also take place when the mesh is updated using some stiff numerical solution (for example a phase-field variable as in \cite{fakhari2016mass}) but we still want to achieve a good accuracy in the coarsely meshed areas for the non-stiff variables (for example the velocity field in the incompressible Navier-Stokes system \cite{nguessan2021}).
    In these tests, the ``standard'' leaves collision given by \eqref{eq:CollisionPhase} is used and the study of the different collision strategies is postponed to Section \ref{sec:StudyCollisionOperator}.
    The summary of the four configurations we test is given on Table \ref{tab:TestCasesResumee}: they include both the 1D and the 2D framework with linear and non-linear equations.

    When the reference scheme is expected to convergence as $\maxlevel \to +\infty$, we consider a fixed number of coarsening steps $\maxlevel - \minlevel$ and we increase the maximal level $\maxlevel$ to observe convergence. On the other hand, when the reference scheme is not convergent to the solution of the target equation, the maximum level $\maxlevel$ is fixed and the number of coarsenings $\maxlevel - \minlevel$ is increased.
    The first kind of study aims at evaluating the possible ``interference'' of the adaptive strategy with the order of convergence of the reference scheme and precisely show that the theoretical analysis allows to study such phenomenon. On the other hand, the second kind of study tries to quantify the effect of the adaptive scheme compared to the error of the reference scheme and shows that the previous analysis allows to construct a comparative evaluation of the various methods.

    We monitor the following quantities (which are all normalized $\ell^1$ norms) at the final time $T$:
    \begin{itemize}
        \item $\text{E}_{\text{ref}}$: error of the reference scheme with respect to the exact solution of the problem. This is intrinsic to the numerical method and sometimes converging as $\spacestep \to 0$.
        \item $\text{E}_{\text{adap}}^{\minlevel}$: error of the adaptive scheme with respect to the exact solution measured at level $\minlevel$.
        \item $\text{E}_{\text{adap}}^{\maxlevel}$: error of the adaptive scheme with respect to the exact solution measured at level $\maxlevel$, using the reconstruction operator.
        \item $\text{D}_{\text{adap}}$: difference between the reference and adaptive scheme, where the adaptive datum has been reconstructed at finest level in order to compare it with the solution of the reference scheme. It is converging as $\leveldifference_{\text{min}} := \maxlevel - \minlevel \to 0$.
    \end{itemize}
    The objective is to keep $\text{D}_{\text{adap}} \ll \text{E}_{\text{ref}} $ regardless of the fact that $\text{E}_{\text{ref}}$ converges, so that the error of the adaptive scheme is largely dominated by that of the reference scheme. Generally speaking, this is the aim of the multiresolution analysis, see \cite{bellotti2021sisc}.
    By the triangle inequality, the following control on the error of the adaptive method holds $\text{E}_{\text{adap}}^{\maxlevel} \leq \text{E}_{\text{ref}} + \text{D}_{\text{adap}}$: the error of the adaptive method is the result of two contributions, the error of the reference scheme (which in principle cannot be alleviated) and the difference between the behavior of the adaptive and the reference scheme (to be alleviated by increasing $\predictionstencil$).
    
    A second kind of test, see Section \ref{sec:MeshAdaptationTest}, is the only place in the paper where we actually consider spatially dynamically adapted grids \emph{via} the multiresolution procedure.
    The aim is twofold.
    On one hand, we want to show that multiresolution is able to correctly capture the evolution of steep solutions arising from non-linear equations, even when the mesh at the initial time is heavily coarsened due to the regularity of the initial datum of the problem.
    On the other hand, we show the importance of using time-adaptive meshes once the smoothness of the solution decreases during the simulation. Indeed, we see that in this case the smoothness assumption to perform the theoretical analysis is no longer valid and using uniform coarsened meshes produces unpredictable behaviors. Multiresolution strongly alleviates the issue by following the steep zones of the solution.
    
    A third kind of test (Section \ref{sec:StudyCollisionOperator}) comes back to uniform coarsened meshes and its scope is to compare the different collision strategies presented in Section \ref{sec:DifferentCollisionStrategies} in a non-linear context, which is the one where one can potentially observe discrepancies between different strategies.

    \begin{remark}[Number of conserved moments]
    For the sake of this work, all the schemes have only one conserved variable $\average{u} = \sum_{\populationindex} \average{\distribution}^{\populationindex}$, but all the previous study is independent of the number of conserved variables, since it pertains to the stream phase of the method. 
    \end{remark}

    \FloatBarrier

    \subsection{1D Linear advection equation}\label{sec:LinearAdvection}
        The aim of this test case is to validate our analysis in a case where, on one hand, we know that the reference scheme converges as $\spacestep \to 0$ because the relaxation parameters do not depend on $\spacestep$ and, on the other hand, the equilibria are linear thus the collision strategy does not alter the quality of the method.
        We expect that all the tested methods match enough terms in order to maintain the convergence of the reference method. However, two interwoven phenomena can take place. The first is a modification of the convergence rate because of $\text{D}_{\text{adap}}$. Second, at some point, the term $\text{D}_{\text{adap}}$ can become non-negligible with respect to $\text{E}_{\text{ref}}$.
        
    \subsubsection{The problem and the reference scheme}

    To this end, consider the linear advection equation with constant velocity $V\in \mathbb{R}$ (see Table \ref{tab:TestCasesResumee}) with solution
    \begin{equation}\label{eq:LinearAdvectionEquation}
        u(t, x) = \frac{1}{(4 \pi \nu t_0)^{1/2}} \text{exp} \adaptiveroundbrackets{-\frac{|x- V t |^2}{4\nu t_0}}.
    \end{equation}
    We simulate using the following parameters: $T = 2$, $t_0 = 1$, $V = 0.5$, $\nu = 0.005$. 
    For the sake of the computation, we consider a bounded domain $[-3, 3]$ with copy boundary conditions.
    The numerical scheme we adapt is a D1Q2 with velocities $\normalizedvelocityletter_0 = 1$ and $\normalizedvelocityletter_1 = -1$ with change of basis and relaxation matrix given by
    
    \begin{equation*}
        \operatorial{M} = \adaptiveroundbrackets{\begin{matrix}
                                                   1 & 1 \\
                                                   \latticevelocity & -\latticevelocity \\
                                                 \end{matrix}}, \qquad
        \operatorial{S} = \text{diag}(0, s).
    \end{equation*}
    Taking $m^{1, \text{eq}} = V m^0$, it has been theoretically shown \cite{dellacherie2014construction} (conditionally for the $\ell^{\infty}$ norm and unconditionally for the $\ell^2$ norm) and numerically verified  \cite{graille2014approximation} that the scheme converges linearly towards the solution \eqref{eq:LinearAdvectionEquation} for $s \in ]0, 2[$ and quadratically for $s = 2$ as $\spacestep \to 0$. In this test, we take $\latticevelocity = 1$.

    \subsubsection{Results and discussion}

    The test is conducted fixing the difference between the maximum level $\maxlevel$ and the minimum level $\minlevel$, at which we perform the computation, either at $\leveldifference_{\text{min}} = 2$ or $\leveldifference_{\text{min}} = 6$ for two different relaxation parameters, namely $s = 1$ and $s = 2$.
    We progressively increase the maximum level $\maxlevel$ to observe the convergence of the reference scheme towards the solution and to study how the adaptive schemes behave in such a situation.
    
    \begin{figure}
       \begin{center}
            \includegraphics[width=1.\textwidth]{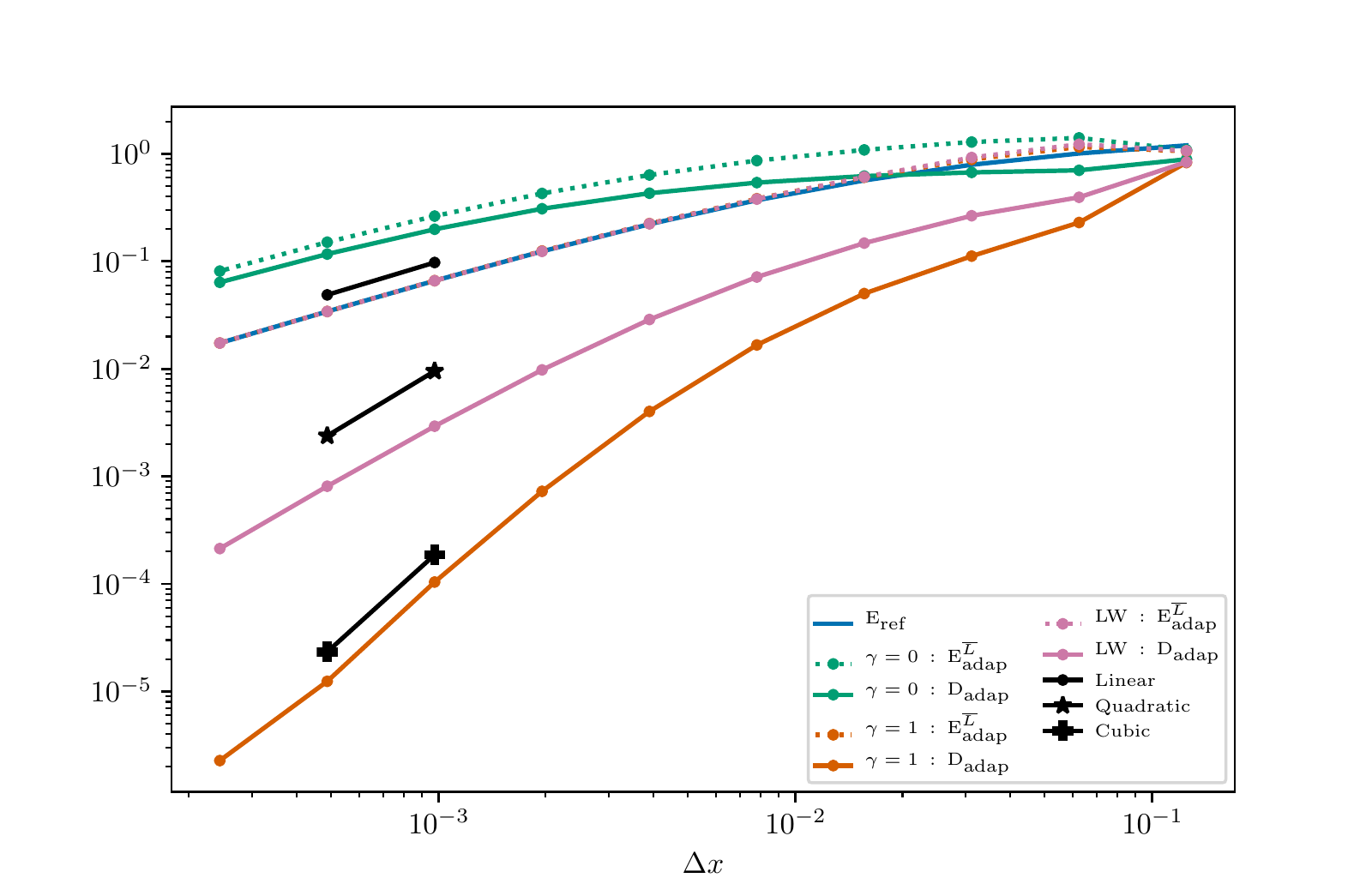}
        \end{center}\caption{\label{fig:d1q2_linear_advection-1}Results for the test 1 with the 1D Linear advection equation taking $\leveldifference_{\text{min}} = 2$ and $s = 1$.}
    \end{figure} 
    
    \begin{figure}
       \begin{center}
            \includegraphics[width=1.\textwidth]{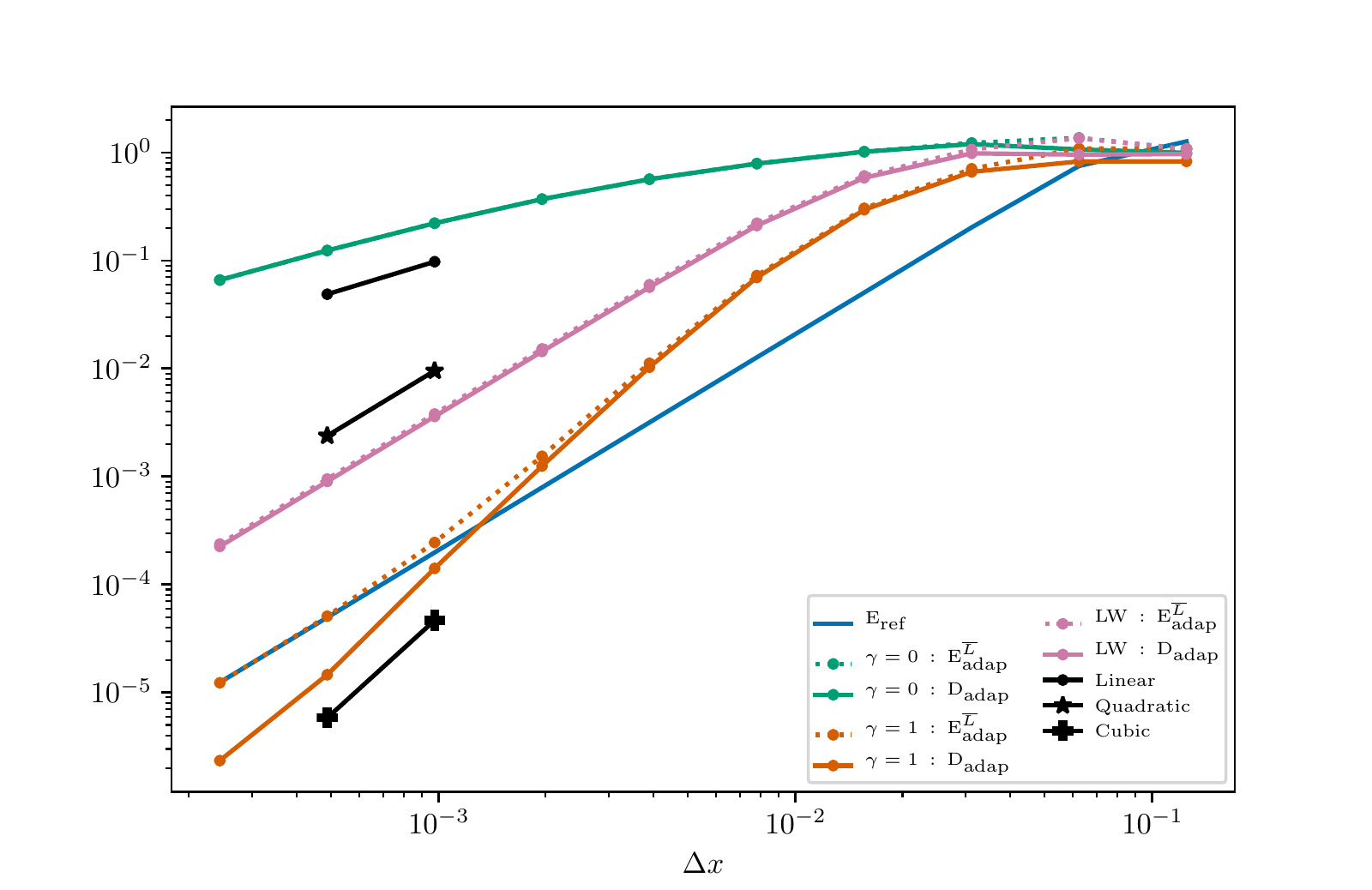}
        \end{center}\caption{\label{fig:d1q2_linear_advection-2}Results for the test 1 with the 1D Linear advection equation taking $\leveldifference_{\text{min}} = 2$ and $s = 2$.}
    \end{figure}

\begin{table}\caption{\label{tab:LinearAdvection_s_1_dL_2}Test 1 for the 1D Linear advection equation taking $\leveldifference_{\text{min}} = 2$ and $s = 1$.}
    \centering
        \begin{tiny}
        \begin{tabular}{ccccccccccc}
              & & \multicolumn{3}{|c|}{Haar $\predictionstencil = 0$} & \multicolumn{3}{|c|}{$\predictionstencil = 1$} & \multicolumn{3}{|c}{Lax-Wendroff} \\
            \toprule[2pt]
            $\maxlevel$ & $\text{E}_{\text{ref}}$ & $\text{E}_{\text{adap}}^{\minlevel}$ & $\text{E}_{\text{adap}}^{\maxlevel}$ & $\text{D}_{\text{adap}}$ & $\text{E}_{\text{adap}}^{\minlevel}$ & $\text{E}_{\text{adap}}^{\maxlevel}$ & $\text{D}_{\text{adap}}$ & $\text{E}_{\text{adap}}^{\minlevel}$ & $\text{E}_{\text{adap}}^{\maxlevel}$ & $\text{D}_{\text{adap}}$\\
            \midrule
3	&  \texttt{1.20e+0} \phantom{(0.25)}	& \texttt{1.21e+0}	& \texttt{1.09e+0}	& \texttt{8.92e-1} \phantom{(0.25)}	    & \texttt{8.41e-1}	& \texttt{1.06e+0}	& \texttt{8.31e-1} \phantom{(0.25)}	    & \texttt{8.58e-1}	& \texttt{1.07e+0}	& \texttt{8.40e-1} \phantom{(0.25)}	       \\      
4	& \texttt{1.01e+0} 	         (0.25)		& \texttt{1.43e+0}	& \texttt{1.41e+0}	& \texttt{7.04e-1}	        (0.34)		& \texttt{1.07e+0}	& \texttt{1.16e+0}	& \texttt{2.30e-1}	        (1.85)		& \texttt{1.13e+0}	& \texttt{1.23e+0}	& \texttt{3.95e-1}	        (1.09) \\
5	& \texttt{7.93e-1} 	         (0.35)		& \texttt{1.24e+0}	& \texttt{1.29e+0}	& \texttt{6.73e-1}	        (0.06)		& \texttt{9.27e-1}	& \texttt{8.89e-1}	& \texttt{1.12e-1}	        (1.04)		& \texttt{9.55e-1}	& \texttt{9.26e-1}	& \texttt{2.66e-1}	        (0.57) \\
6	& \texttt{5.67e-1} 	         (0.48)		& \texttt{1.08e+0}	& \texttt{1.09e+0}	& \texttt{6.22e-1}	        (0.11)		& \texttt{6.21e-1}	& \texttt{6.07e-1}	& \texttt{5.02e-2}	        (1.16)		& \texttt{6.19e-1}	& \texttt{6.11e-1}	& \texttt{1.48e-1}	        (0.84) \\
7	& \texttt{3.71e-1} 	         (0.61)		& \texttt{8.68e-1}	& \texttt{8.66e-1}	& \texttt{5.41e-1}	        (0.20)		& \texttt{3.86e-1}	& \texttt{3.83e-1}	& \texttt{1.67e-2}	        (1.59)		& \texttt{3.81e-1}	& \texttt{3.80e-1}	& \texttt{7.16e-2}	        (1.05) \\
8	& \texttt{2.22e-1} 	         (0.74)		& \texttt{6.37e-1}	& \texttt{6.37e-1}	& \texttt{4.31e-1}	        (0.33)		& \texttt{2.26e-1}	& \texttt{2.25e-1}	& \texttt{4.02e-3}	        (2.05)		& \texttt{2.24e-1}	& \texttt{2.24e-1}	& \texttt{2.88e-2}	        (1.31) \\
9	& \texttt{1.24e-1} 	         (0.84)		& \texttt{4.29e-1}	& \texttt{4.29e-1}	& \texttt{3.09e-1}	        (0.48)		& \texttt{1.25e-1}	& \texttt{1.25e-1}	& \texttt{7.26e-4}	        (2.47)		& \texttt{1.25e-1}	& \texttt{1.24e-1}	& \texttt{9.79e-3}	        (1.56) \\
10	& \texttt{6.61e-2} 	         (0.91)		& \texttt{2.64e-1}	& \texttt{2.64e-1}	& \texttt{1.99e-1}	        (0.64)		& \texttt{6.62e-2}	& \texttt{6.62e-2}	& \texttt{1.04e-4}	        (2.80)		& \texttt{6.62e-2}	& \texttt{6.61e-2}	& \texttt{2.93e-3}	        (1.74) \\
11	& \texttt{3.42e-2} 	         (0.95)		& \texttt{1.51e-1}	& \texttt{1.51e-1}	& \texttt{1.17e-1}	        (0.77)		& \texttt{3.42e-2}	& \texttt{3.42e-2}	& \texttt{1.24e-5}	        (3.06)		& \texttt{3.42e-2}	& \texttt{3.42e-2}	& \texttt{8.10e-4}	        (1.86) \\
12	& \texttt{1.74e-2} 	         (0.97)		& \texttt{8.13e-2}	& \texttt{8.13e-2}	& \texttt{6.39e-2}	        (0.87)		& \texttt{1.74e-2}	& \texttt{1.74e-2}	& \texttt{2.27e-6}	        (2.46)		& \texttt{1.74e-2}	& \texttt{1.74e-2}	& \texttt{2.13e-4}	        (1.92) \\
            \bottomrule
        \end{tabular}
        \end{tiny}
\end{table}

            \begin{table}\caption{\label{tab:LinearAdvection_s_2_dL_2}Test 1 for the 1D Linear advection equation taking $\leveldifference_{\text{min}} = 2$ and $s = 2$.}
        \begin{center}
            \begin{tiny}
            \begin{tabular}{ccccccccccc}
              & & \multicolumn{3}{|c|}{Haar $\predictionstencil = 0$} & \multicolumn{3}{|c|}{$\predictionstencil = 1$} & \multicolumn{3}{|c}{Lax-Wendroff} \\
            \toprule[2pt]
            $\maxlevel$ & $\text{E}_{\text{ref}}$ & $\text{E}_{\text{adap}}^{\minlevel}$ & $\text{E}_{\text{adap}}^{\maxlevel}$ & $\text{D}_{\text{adap}}$ & $\text{E}_{\text{adap}}^{\minlevel}$ & $\text{E}_{\text{adap}}^{\maxlevel}$ & $\text{D}_{\text{adap}}$ & $\text{E}_{\text{adap}}^{\minlevel}$ & $\text{E}_{\text{adap}}^{\maxlevel}$ & $\text{D}_{\text{adap}}$\\
            \midrule
3	& \texttt{1.27e+0} \phantom{(0.75)}		& \texttt{1.15e+0}	& \texttt{1.08e+0}	& \texttt{9.96e-1} \phantom{(-0.16)}			    & \texttt{9.20e-1}	& \texttt{1.09e+0}	& \texttt{8.29e-1} \phantom{(0.75)}	    & \texttt{8.56e-1}	& \texttt{1.08e+0}	& \texttt{9.70e-1}	\phantom{(-0.04)}        \\     
4	& \texttt{7.53e-1}	        (0.75)		& \texttt{1.37e+0}	& \texttt{1.37e+0}	& \texttt{1.07e+0}	        (-0.10)		            & \texttt{9.20e-1}	& \texttt{1.09e+0}	& \texttt{8.29e-1}	        (0.00)		& \texttt{1.22e+0}	& \texttt{1.36e+0}	& \texttt{9.59e-1}	(\phantom{-}0.02) \\
5	& \texttt{2.03e-1}	        (1.89)		& \texttt{1.20e+0}	& \texttt{1.23e+0}	& \texttt{1.20e+0}	        (-0.16)		            & \texttt{7.54e-1}	& \texttt{7.09e-1}	& \texttt{6.64e-1}	        (0.32)		& \texttt{1.09e+0}	& \texttt{1.07e+0}	& \texttt{9.85e-1}	(-0.04) \\
6	& \texttt{5.06e-2}	        (2.01)		& \texttt{1.01e+0}	& \texttt{1.02e+0}	& \texttt{1.02e+0}	        (\phantom{-}0.24)		& \texttt{3.00e-1}	& \texttt{3.05e-1}	& \texttt{2.96e-1}	        (1.17)		& \texttt{6.13e-1}	& \texttt{6.09e-1}	& \texttt{5.84e-1}	(\phantom{-}0.75) \\
7	& \texttt{1.27e-2}	        (2.00)		& \texttt{7.91e-1}	& \texttt{7.92e-1}	& \texttt{7.91e-1}	        (\phantom{-}0.36)		& \texttt{7.48e-2}	& \texttt{7.27e-2}	& \texttt{6.98e-2}	        (2.09)		& \texttt{2.23e-1}	& \texttt{2.22e-1}	& \texttt{2.11e-1}	(\phantom{-}1.47) \\
8	& \texttt{3.17e-3}	        (2.00)		& \texttt{5.67e-1}	& \texttt{5.67e-1}	& \texttt{5.67e-1}	        (\phantom{-}0.48)		& \texttt{1.17e-2}	& \texttt{1.12e-2}	& \texttt{1.03e-2}	        (2.77)		& \texttt{5.99e-2}	& \texttt{5.98e-2}	& \texttt{5.67e-2}	(\phantom{-}1.90) \\
9	& \texttt{7.92e-4}	        (2.00)		& \texttt{3.71e-1}	& \texttt{3.71e-1}	& \texttt{3.71e-1}	        (\phantom{-}0.61)		& \texttt{1.68e-3}	& \texttt{1.54e-3}	& \texttt{1.25e-3}	        (3.03)		& \texttt{1.52e-2}	& \texttt{1.52e-2}	& \texttt{1.44e-2}	(\phantom{-}1.98) \\
10	& \texttt{1.98e-4}	        (2.00)		& \texttt{2.22e-1}	& \texttt{2.22e-1}	& \texttt{2.22e-1}	        (\phantom{-}0.74)		& \texttt{2.75e-4}	& \texttt{2.45e-4}	& \texttt{1.41e-4}	        (3.15)		& \texttt{3.80e-3}	& \texttt{3.80e-3}	& \texttt{3.60e-3}	(\phantom{-}2.00) \\
11	& \texttt{4.95e-5}	        (2.00)		& \texttt{1.24e-1}	& \texttt{1.24e-1}	& \texttt{1.24e-1}	        (\phantom{-}0.84)		& \texttt{5.57e-5}	& \texttt{5.09e-5}	& \texttt{1.46e-5}	        (3.27)		& \texttt{9.49e-4}	& \texttt{9.49e-4}	& \texttt{9.00e-4}	(\phantom{-}2.00) \\
12	& \texttt{1.24e-5}	        (2.00)		& \texttt{6.61e-2}	& \texttt{6.61e-2}	& \texttt{6.61e-2}	        (\phantom{-}0.91)		& \texttt{1.29e-5}	& \texttt{1.23e-5}	& \texttt{2.34e-6}	        (2.64)		& \texttt{2.37e-4}	& \texttt{2.37e-4}	& \texttt{2.25e-4}	(\phantom{-}2.00) \\
                \hline
            \end{tabular}
            \end{tiny}
        \end{center}
    \end{table}
    
\begin{table}\caption{\label{tab:LinearAdvection_s_1_dL_6}Test 1 for the 1D Linear advection equation taking $\leveldifference_{\text{min}} = 6$ and $s = 1$.}
        \begin{center}
            \begin{tiny}
            \begin{tabular}{ccccccccccc}
              & & \multicolumn{3}{|c|}{Haar $\predictionstencil = 0$} & \multicolumn{3}{|c|}{$\predictionstencil = 1$} & \multicolumn{3}{|c}{Lax-Wendroff} \\
            \toprule[2pt]
            $\maxlevel$ & $\text{E}_{\text{ref}}$ & $\text{E}_{\text{adap}}^{\minlevel}$ & $\text{E}_{\text{adap}}^{\maxlevel}$ & $\text{D}_{\text{adap}}$ & $\text{E}_{\text{adap}}^{\minlevel}$ & $\text{E}_{\text{adap}}^{\maxlevel}$ & $\text{D}_{\text{adap}}$ & $\text{E}_{\text{adap}}^{\minlevel}$ & $\text{E}_{\text{adap}}^{\maxlevel}$ & $\text{D}_{\text{adap}}$\\
            \midrule
7	& \texttt{3.71e-1}	\phantom{(0.74)}	& \texttt{1.20e+0}	& \texttt{1.08e+0}	& \texttt{1.05e+0}  \phantom{(-0.32)}		& \texttt{7.52e-1}	& \texttt{1.07e+0}	& \texttt{1.00e+0}	\phantom{(-0.12)}		& \texttt{1.22e+0}	& \texttt{1.14e+0}	& \texttt{1.08e+0}  \phantom{(-0.73)}        \\ 	
8	& \texttt{2.22e-1}	         (0.74)		& \texttt{1.44e+0}	& \texttt{1.41e+0}	& \texttt{1.31e+0}	          (-0.32)		& \texttt{1.03e+0}	& \texttt{1.28e+0}	& \texttt{1.09e+0}	          (-0.12)		& \texttt{1.83e+0}	& \texttt{1.95e+0}	& \texttt{1.79e+0}	          (-0.73) \\
9	& \texttt{1.24e-1}	         (0.84)		& \texttt{1.25e+0}	& \texttt{1.30e+0}	& \texttt{1.23e+0}	(\phantom{-}0.10)		& \texttt{7.71e-1}	& \texttt{7.37e-1}	& \texttt{6.23e-1}	(\phantom{-}0.81)		& \texttt{1.62e+0}	& \texttt{1.65e+0}	& \texttt{1.55e+0}	(\phantom{-}0.21) \\
10	& \texttt{6.61e-2}	         (0.91)		& \texttt{1.10e+0}	& \texttt{1.11e+0}	& \texttt{1.06e+0}	(\phantom{-}0.21)		& \texttt{2.12e-1}	& \texttt{2.03e-1}	& \texttt{1.53e-1}	(\phantom{-}2.03)		& \texttt{7.59e-1}	& \texttt{7.61e-1}	& \texttt{7.13e-1}	(\phantom{-}1.12) \\
11	& \texttt{3.42e-2}	         (0.95)		& \texttt{8.88e-1}	& \texttt{8.85e-1}	& \texttt{8.58e-1}	(\phantom{-}0.30)		& \texttt{4.71e-2}	& \texttt{4.47e-2}	& \texttt{1.89e-2}	(\phantom{-}3.01)		& \texttt{2.34e-1}	& \texttt{2.33e-1}	& \texttt{2.18e-1}	(\phantom{-}1.71) \\
12	& \texttt{1.74e-2}	         (0.97)		& \texttt{6.57e-1}	& \texttt{6.57e-1}	& \texttt{6.41e-1}	(\phantom{-}0.42)		& \texttt{1.90e-2}	& \texttt{1.80e-2}	& \texttt{1.94e-3}	(\phantom{-}3.28)		& \texttt{6.17e-2}	& \texttt{6.16e-2}	& \texttt{5.79e-2}	(\phantom{-}1.91) \\
                \hline
            \end{tabular}
            \end{tiny}
        \end{center}
    \end{table}

    \begin{table}\caption{\label{tab:LinearAdvection_s_2_dL_6}Test 1 for the 1D Linear advection equation  taking $\leveldifference_{\text{min}} = 6$ and $s = 2$.}
        \begin{center}
            \begin{tiny}
            \begin{tabular}{ccccccccccc}
              & & \multicolumn{3}{|c|}{Haar $\predictionstencil = 0$} & \multicolumn{3}{|c|}{$\predictionstencil = 1$} & \multicolumn{3}{|c}{Lax-Wendroff} \\
            \toprule[2pt]
            $\maxlevel$ & $\text{E}_{\text{ref}}$ & $\text{E}_{\text{adap}}^{\minlevel}$ & $\text{E}_{\text{adap}}^{\maxlevel}$ & $\text{D}_{\text{adap}}$ & $\text{E}_{\text{adap}}^{\minlevel}$ & $\text{E}_{\text{adap}}^{\maxlevel}$ & $\text{D}_{\text{adap}}$ & $\text{E}_{\text{adap}}^{\minlevel}$ & $\text{E}_{\text{adap}}^{\maxlevel}$ & $\text{D}_{\text{adap}}$\\
            \midrule
7	& \texttt{1.27e-2} \phantom{(2.00)}		& \texttt{1.20e+0}	& \texttt{1.08e+0}	& \texttt{1.08e+0}	 \phantom{(-0.38)}		& \texttt{7.47e-1}	& \texttt{1.07e+0}	& \texttt{1.07e+0}	\phantom{(-0.29)}		& \texttt{1.22e+0}	& \texttt{1.14e+0}	& \texttt{1.14e+0}	\phantom{(-0.80)}        \\ 
8	& \texttt{3.17e-3}	        (2.00)		& \texttt{1.44e+0}	& \texttt{1.41e+0}	& \texttt{1.41e+0}	          (-0.38)		& \texttt{1.05e+0}	& \texttt{1.31e+0}	& \texttt{1.31e+0}	          (-0.29)		& \texttt{1.86e+0}	& \texttt{1.98e+0}	& \texttt{1.98e+0}              (-0.80) \\
9	& \texttt{7.92e-4}	        (2.00)		& \texttt{1.25e+0}	& \texttt{1.30e+0}	& \texttt{1.30e+0}	(\phantom{-}0.12)		& \texttt{7.84e-1}	& \texttt{7.65e-1}	& \texttt{7.65e-1}	(\phantom{-}0.77)		& \texttt{1.68e+0}	& \texttt{1.72e+0}	& \texttt{1.72e+0}	(\phantom{-}0.20)  \\ 
10	& \texttt{1.98e-4}	        (2.00)		& \texttt{1.10e+0}	& \texttt{1.10e+0}	& \texttt{1.10e+0}	(\phantom{-}0.23)		& \texttt{2.00e-1}	& \texttt{1.94e-1}	& \texttt{1.94e-1}	(\phantom{-}1.98)		& \texttt{8.05e-1}	& \texttt{8.07e-1}	& \texttt{8.07e-1}	(\phantom{-}1.09)  \\ 
11	& \texttt{4.95e-5}	        (2.00)		& \texttt{8.84e-1}	& \texttt{8.82e-1}	& \texttt{8.82e-1}	(\phantom{-}0.32)		& \texttt{2.33e-2}	& \texttt{2.22e-2}	& \texttt{2.22e-2}	(\phantom{-}3.13)		& \texttt{2.41e-1}	& \texttt{2.41e-1}	& \texttt{2.41e-1}	(\phantom{-}1.74)  \\ 
12	& \texttt{1.24e-5}	        (2.00)		& \texttt{6.53e-1}	& \texttt{6.53e-1}	& \texttt{6.53e-1}	(\phantom{-}0.43)		& \texttt{2.57e-3}	& \texttt{2.12e-3}	& \texttt{2.12e-3}	(\phantom{-}3.39)		& \texttt{6.09e-2}	& \texttt{6.11e-2}	& \texttt{6.11e-2}	(\phantom{-}1.98)  \\ 
                \hline
            \end{tabular}
            \end{tiny}
        \end{center}
    \end{table}
    
    The results are provided on Table \ref{tab:LinearAdvection_s_1_dL_2} and \ref{tab:LinearAdvection_s_2_dL_2} for $\leveldifference_{\text{min}} = 2$ and on Tables \ref{tab:LinearAdvection_s_1_dL_6} and \ref{tab:LinearAdvection_s_2_dL_6} for $\leveldifference_{\text{min}} = 6$. Moreover, they are also presented in a more effective way on Figures \ref{fig:d1q2_linear_advection-1} and \ref{fig:d1q2_linear_advection-2}.
    In all of them, we observe the expected behavior of the reference scheme, converging linearly for $s = 1$ and quadratically for $s = 2$.
    Let us comment on the behavior of each adaptive strategy:
    \begin{itemize}
        \item \textbf{Multiresolution scheme for $\predictionstencil = 0$}. In the case where the reference method is first-order convergent ($s = 1$), we observe that $\text{E}_{\text{adap}}^{\maxlevel}$ is also first-order convergent but, especially for $\leveldifference_{\text{min}} = 6$, $\text{D}_{\text{adap}}$ dominates against $\text{E}_{\text{ref}}$, which is the reason why the green full line and the green dashed lines are not superposed in Figure \ref{fig:d1q2_linear_advection-1}.
        Moreover, this is the reason why for $s = 2$, despite the fact that $\text{E}_{\text{ref}}$ converges quadratically, $\text{E}_{\text{adap}}^{\maxlevel}$ converges only linearly due to the limitations imposed by the convergence ratio of $\text{D}_{\text{adap}}$.
        \item \textbf{Multiresolution scheme for $\predictionstencil = 1$}. We observe that in any case the convergence rate of $\text{D}_{\text{adap}}$ is third-order. Therefore, we always obtain the same convergence rate of  $\text{E}_{\text{ref}}$ for $\text{E}_{\text{adap}}^{\maxlevel}$.
        \item \textbf{Lax-Wendroff scheme}. Since the convergence rate of $\text{D}_{\text{adap}}$ is second-order, we always observe no alteration of the convergence rate of the reference scheme by the adaptive method.
    \end{itemize}
    Finally, observe that for any method the convergence rates of $\text{D}_{\text{adap}}$ are erratic for small $\maxlevel$ allegedly because Taylor expansions are not fully legitimate for coarse resolutions.
    These numerical experiments confirm the theoretical study and show that the adaptive method should match enough terms to avoid alterations of the convergence rates of the reference scheme.
    In particular, if the reference method converges at order $s$, the adaptive stream phase must match at least at order $s$ as well.


    \subsection{1D Linear advection-diffusion equation}\label{sec:LinearAdvectionDiffusion}
    
    In this test case, the choice of collision phase still does not play any role because of the linearity.
    However, the reference method is not longer convergent because the relaxation parameters are adjusted to recover the right diffusion terms.
    Despite this lack of convergence, the aim of the simulations is to show that the LBM-MR scheme for $\predictionstencil = 0$ is not accurate enough to reproduce the physics of the reference algorithm.
    Moreover, we shall observe that the Lax-Wendroff scheme correctly accounts for the diffusion terms but can introduce spurious oscillations due to the lack of matching of the third-order terms. This is not the case for the LBM-MR scheme with $\predictionstencil = 1$.
    
    \subsubsection{The problem and the reference scheme}
    
    Consider the solution of the linear advection-diffusion equation (see Table \ref{tab:TestCasesResumee}) where $V \in \mathbb{R}$ is the advection velocity and $\nu > 0$ is the diffusion coefficient
    \begin{equation}\label{eq:LinearAdvectionDiffusionEquation}
        u(t, x) = \frac{1}{(4 \pi \nu (t_0 + t))^{1/2}} \text{exp} \adaptiveroundbrackets{-\frac{|x - Vt|^2}{4\nu (t_0 + t)}}.
    \end{equation}
    The parameters of the problem and the computational domain are the same than in the previous Section \ref{sec:LinearAdvection}.
    We consider a D1Q3 scheme with velocities $\normalizedvelocityletter_0 = 0, \normalizedvelocityletter_1 = 1$ and $\normalizedvelocityletter_2 = -1$ with change of basis and relaxation matrix given by
    \begin{equation*}
        \operatorial{M} = \adaptiveroundbrackets{\begin{matrix}
                                                   1 & 1 & 1 \\
                                                   0 & \latticevelocity & -\latticevelocity \\
                                                   0 & \latticevelocity^2/2 & \latticevelocity^2/2 \\
                                                 \end{matrix}}, \qquad
        \operatorial{S} = \text{diag}(0, s_v, s_w).
    \end{equation*}

    The scheme is determined by $\latticevelocity = 1$, the equilibria $\momentum^{1, \text{eq}} = V\momentum^0$ and $ \momentum^{2, \text{eq}} = \kappa \momentum^0$, with $\kappa = 0.5$ and $s_{v} = (1/2 + \lambda \nu /(\spacestep (2\kappa - V^2)))^{-1}$ and $s_w = 1$.
    It is in general not convergent, because we are making $s_v \to 0$ as $\spacestep \to 0$ in order to obtain the right diffusion structure.

    \subsubsection{Results and discussion}
    
    Since we are no longer interested in convergence, we take a fixed $\maxlevel = 11$ and we vary the minimum level at which we perform computations.
    The results are given on Table \ref{tab:LinearAdvectionDiffusion} and the solution at final time $T$ for some $\leveldifference_{\text{min}}$ is shown on Figure \ref{fig:linear_advection_diffusion}.
    We observe that:
    \begin{itemize}
        \item \textbf{Multiresolution scheme for $\predictionstencil = 0$}. The inertial term (advection) is correctly represented in accordance with the theoretical analysis for any $\leveldifference_{\text{min}}$, because the packet is transported at the right velocity until reaching the point $x = 1$. 
        Nevertheless, the dissipative (diffusion) term is not correct because we have an excess of diffusion as long as $\leveldifference_{\text{min}}$ grows. This was predicted by the theoretical framework.
        Even if the dispersive term is not matched either, this does not affect the stability of the method because of the large amount of available numerical diffusion.
        As expected, $\text{D}_{\text{adap}}$ is not negligible compared to $\text{E}_{\text{ref}}$ for any $\leveldifference_{\text{min}} \geq 1$.

        \item \textbf{Multiresolution scheme for $\predictionstencil = 1$}. We observe that both the inertial (advection) and dissipative (diffusion) term are correctly represented.
        Moreover, according to our intuition, since the dispersive terms are untouched compared to the target expansion \eqref{eq:TaylorExpansion}, the adaptive method remains stable when the reference method is stable.
        Looking at the errors more carefully, we see that the additional error $\text{D}_{\text{adap}}$ is negligible compared to $\text{E}_{\text{ref}}$ for $\leveldifference_{\text{min}} < 6$ or $7$.
        \item \textbf{Lax-Wendroff scheme}. It correctly matches the inertial and dissipative phenomena. 
        Nevertheless, as $\leveldifference_{\text{min}}$ grows, we observe the formation of spurious oscillations and the packet is not perfectly centered at $x = 1$. This is allegedly due to the modification of the third-order dispersion as theoretically observed.
        For the tested case, this is not enough to induce instabilities.
        This shows that this adaptive method can be subjected to instabilities even when the reference method is stable because of the modification of the dispersion.
    The additional error $\text{D}_{\text{adap}}$ is negligible compared to $\text{E}_{\text{ref}}$ for $\leveldifference_{\text{min}} < 3$ or $4$.
    \end{itemize}

    \begin{table}\caption{\label{tab:LinearAdvectionDiffusion}Test 2 for the 1D Linear advection diffusion equation taking $\maxlevel = 11$ and performing the computation using a mesh at level $\minlevel$ as indicated.}
        \begin{center}
            \begin{footnotesize}
            \begin{tabular}{cccccccccc}
                  & \multicolumn{3}{|c|}{Haar $\predictionstencil = 0$} & \multicolumn{3}{|c|}{$\predictionstencil = 1$} & \multicolumn{3}{|c}{Lax-Wendroff} \\
            \toprule[2pt]
                $\leveldifference_{\text{min}}$ & $\text{E}_{\text{adap}}^{\minlevel}$ & $\text{E}_{\text{adap}}^{\maxlevel}$ & $\text{D}_{\text{adap}}$ & $\text{E}_{\text{adap}}^{\minlevel}$ & $\text{E}_{\text{adap}}^{\maxlevel}$ & $\text{D}_{\text{adap}}$ & $\text{E}_{\text{adap}}^{\minlevel}$ & $\text{E}_{\text{adap}}^{\maxlevel}$ & $\text{D}_{\text{adap}}$\\
            \midrule
                0 &	\texttt{1.94e-2} &	\texttt{1.94e-2} &	\texttt{0.00e+0} &	\texttt{1.94e-2} &	\texttt{1.94e-2} &	\texttt{0.00e+0} &	\texttt{1.94e-2} &	\texttt{1.94e-2} &	\texttt{0.00e+0} \\
                1 &	\texttt{2.30e-2} &	\texttt{2.30e-2} &	\texttt{1.55e-2} &	\texttt{1.94e-2} &	\texttt{1.94e-2} &	\texttt{7.88e-7} &	\texttt{1.94e-2} &	\texttt{1.94e-2} &	\texttt{3.63e-5} \\
                2 &	\texttt{4.68e-2} &	\texttt{4.68e-2} &	\texttt{4.52e-2} &	\texttt{1.94e-2} &	\texttt{1.94e-2} &	\texttt{3.41e-6} &	\texttt{1.92e-2} &	\texttt{1.92e-2} &	\texttt{1.82e-4} \\
                3 &	\texttt{9.92e-2} &	\texttt{9.92e-2} &	\texttt{9.94e-2} &	\texttt{1.94e-2} &	\texttt{1.94e-2} &	\texttt{1.31e-5} &	\texttt{1.87e-2} &	\texttt{1.87e-2} &	\texttt{7.63e-4} \\
                4 &	\texttt{1.91e-1} &	\texttt{1.91e-1} &	\texttt{1.92e-1} &	\texttt{1.94e-2} &	\texttt{1.94e-2} &	\texttt{5.40e-5} &	\texttt{1.65e-2} &	\texttt{1.65e-2} &	\texttt{3.09e-3} \\
                5 &	\texttt{3.33e-1} &	\texttt{3.33e-1} &	\texttt{3.34e-1} &	\texttt{1.93e-2} &	\texttt{1.93e-2} &	\texttt{2.78e-4} &	\texttt{8.18e-3} &	\texttt{8.32e-3} &	\texttt{1.24e-2} \\
                6 &	\texttt{5.25e-1} &	\texttt{5.24e-1} &	\texttt{5.26e-1} &	\texttt{1.84e-2} &	\texttt{1.84e-2} &	\texttt{1.74e-3} &	\texttt{3.11e-2} &	\texttt{3.16e-2} &	\texttt{5.03e-2} \\
                7 &	\texttt{7.51e-1} &	\texttt{7.47e-1} &	\texttt{7.48e-1} &	\texttt{1.10e-2} &	\texttt{1.07e-2} &	\texttt{1.89e-2} &	\texttt{1.93e-1} &	\texttt{1.96e-1} &	\texttt{2.15e-1} \\
                \hline
            \end{tabular}
            \end{footnotesize}
        \end{center}
    \end{table}
    
    \begin{figure}
       \begin{center}
            \includegraphics[width=1.0\textwidth]{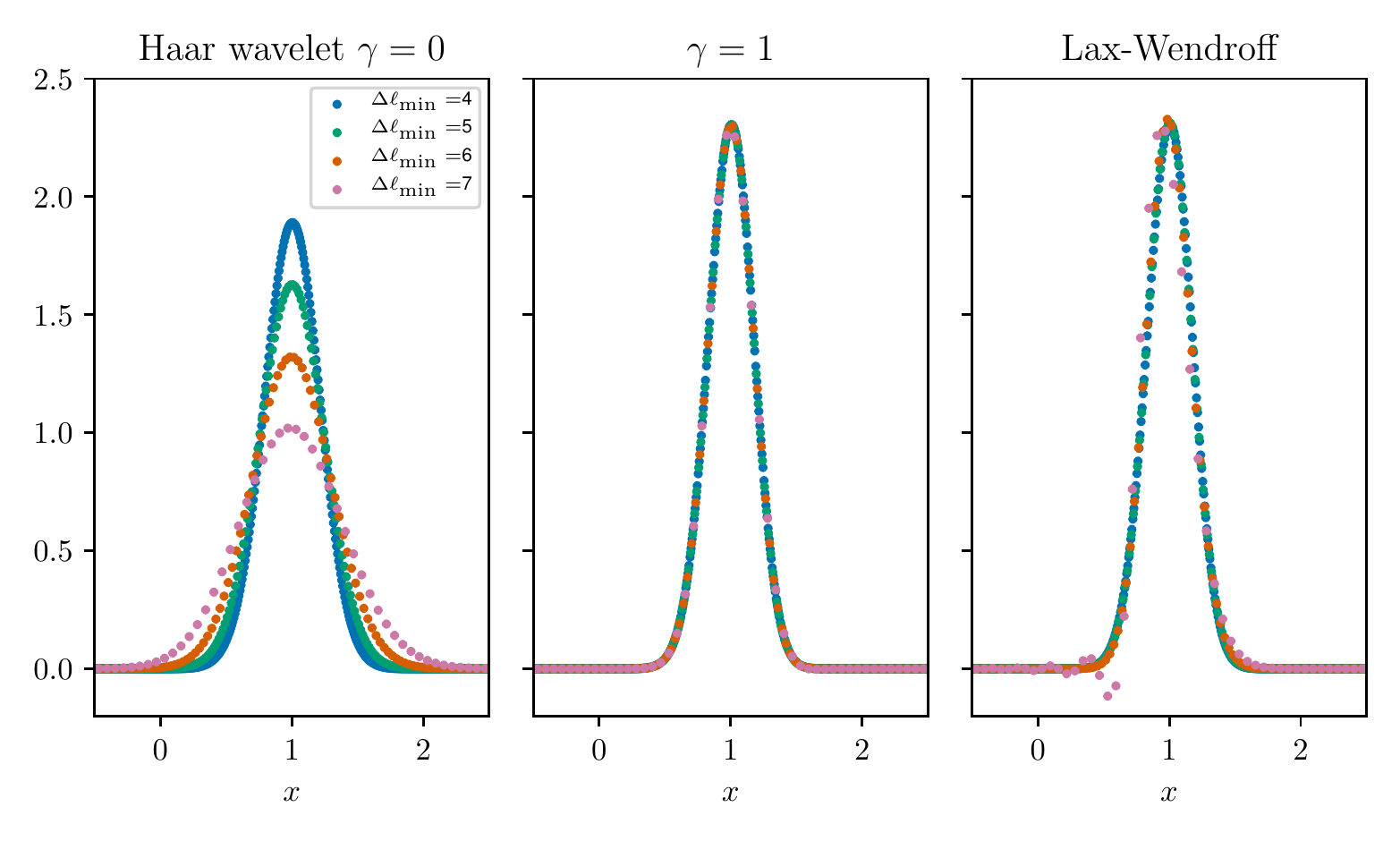}
        \end{center}\caption{\label{fig:linear_advection_diffusion}Solution in test 2 (on a chosen sector of the domain $[-3, 3]$) at the final time $T$ of the 1D linear advection equation for different $\leveldifference_{\text{min}}$ and schemes.}
    \end{figure} 
    
    Once again, the theoretical analysis is fully corroborated by the numerical behavior of the schemes and show that, even in this quite simple framework, LBM-MR schemes for $\predictionstencil \geq 1$ are the most reliable ones.
    
    \FloatBarrier

    \subsection{1D Viscous Burgers equation}\label{sec:ViscousBurgersEquation}
    
    We now turn to a non-linear problem. In this case, the choice of model for the collision phase is no longer negligible and shall be studied at the very end of the work in Section \ref{sec:StudyCollisionOperator}.
    For the moment, we aim at proving that our previous analysis is still meaningful in this context upon verification of the smoothness assumption.
    Indeed, we see that in the case of singularities the previous analysis is no longer well-grounded due to the lack of smoothness and we thus understand the strong interest of dynamic mesh adaptation using multiresolution, discussed in Section \ref{sec:MeshAdaptationTest}.

    \subsubsection{The problem and the reference scheme}
    Consider the solution of the viscous Burgers equation (see Table \ref{tab:TestCasesResumee}) given by
    \begin{equation*}
        u(t, x) = \sqrt{\frac{4\nu}{t}} \frac{\int_{-\infty}^{+\infty} \eta ~  \text{exp}\left [-\frac{1}{4\nu}\text{erf} \adaptiveroundbrackets{\frac{x}{\sqrt{4\nu t_0}} - \sqrt{\frac{t}{t_0}}\eta} \right ] e^{-\eta^2} \text{d}\eta}{\int_{-\infty}^{+\infty}  \text{exp}\left [-\frac{1}{4\nu}\text{erf} \adaptiveroundbrackets{\frac{x}{\sqrt{4\nu t_0}} - \sqrt{\frac{t}{t_0}}\eta} \right ] e^{-\eta^2} \text{d}\eta},
    \end{equation*}    
    where the exact solution has been obtained following \cite{landajuela2011burgers} and the integrals with weight $e^{-\eta^2}$ shall be approximated by Gauss-Hermite formul\ae{} with one hundred points.
    We consider $T = 1$, $t_0 = 1$ and either $\nu = 0.05$ (large diffusion) or $\nu = 0.005$ (small diffusion) for the continuum problem. The computational domain is $[-3, 3]$ with copy boundary conditions.
    We again use a D1Q3 as in Section \ref{sec:LinearAdvectionDiffusion}.
    The scheme is determined by $\latticevelocity = 4$, the equilibria $\momentum^{1, \text{eq}} = (\momentum^0)^2/2$ and $ \momentum^{2, \text{eq}} = (\momentum^0)^3/6 + \kappa \momentum^0/2$, with $\kappa = 4$ (large diffusion) and $\kappa = 1$ (small diffusion) and $s_{v} = (1/2 + \lambda \nu /(\spacestep \kappa))^{-1}$ and $s_w = 1$.
    
    \subsubsection{Results and discussion}

    \begin{table}\caption{\label{tab:ViscousBurgersLargeDiffusion}Test 3 for the 1D viscous Burgers equation taking $\maxlevel = 11$ and $\nu = 0.05$ (large diffusion) and performing the computation using a mesh at level $\minlevel$ as indicated.}
        \begin{center}
            \begin{footnotesize}
            \begin{tabular}{cccccccccc}
                  & \multicolumn{3}{|c|}{Haar $\predictionstencil = 0$} & \multicolumn{3}{|c|}{$\predictionstencil = 1$} & \multicolumn{3}{|c}{Lax-Wendroff} \\
            \toprule[2pt]
                $\leveldifference_{\text{min}}$ & $\text{E}_{\text{adap}}^{\minlevel}$ & $\text{E}_{\text{adap}}^{\maxlevel}$ & $\text{D}_{\text{adap}}$ & $\text{E}_{\text{adap}}^{\minlevel}$ & $\text{E}_{\text{adap}}^{\maxlevel}$ & $\text{D}_{\text{adap}}$ & $\text{E}_{\text{adap}}^{\minlevel}$ & $\text{E}_{\text{adap}}^{\maxlevel}$ & $\text{D}_{\text{adap}}$\\
            \midrule
0 &	\texttt{1.23e-2} &	\texttt{1.23e-2} &	\texttt{0.00e+0} &	\texttt{1.23e-2} &	\texttt{1.23e-2} &	\texttt{0.00e+0} &	\texttt{1.23e-2} &	\texttt{1.23e-2} &	\texttt{0.00e+0} \\
1 &	\texttt{1.24e-2} &	\texttt{1.24e-2} &	\texttt{9.99e-4} &	\texttt{1.23e-2} &	\texttt{1.23e-2} &	\texttt{1.88e-7} &	\texttt{1.23e-2} &	\texttt{1.23e-2} &	\texttt{1.60e-6} \\
2 &	\texttt{1.27e-2} &	\texttt{1.27e-2} &	\texttt{2.99e-3} &	\texttt{1.23e-2} &	\texttt{1.23e-2} &	\texttt{9.34e-7} &	\texttt{1.23e-2} &	\texttt{1.23e-2} &	\texttt{8.02e-6} \\
3 &	\texttt{1.41e-2} &	\texttt{1.41e-2} &	\texttt{6.95e-3} &	\texttt{1.23e-2} &	\texttt{1.23e-2} &	\texttt{3.89e-6} &	\texttt{1.23e-2} &	\texttt{1.23e-2} &	\texttt{3.37e-5} \\
4 &	\texttt{1.94e-2} &	\texttt{1.94e-2} &	\texttt{1.48e-2} &	\texttt{1.23e-2} &	\texttt{1.23e-2} &	\texttt{1.57e-5} &	\texttt{1.22e-2} &	\texttt{1.22e-2} &	\texttt{1.36e-4} \\
5 &	\texttt{3.26e-2} &	\texttt{3.25e-2} &	\texttt{3.00e-2} &	\texttt{1.23e-2} &	\texttt{1.23e-2} &	\texttt{6.30e-5} &	\texttt{1.19e-2} &	\texttt{1.19e-2} &	\texttt{5.48e-4} \\
6 &	\texttt{6.04e-2} &	\texttt{6.03e-2} &	\texttt{5.90e-2} &	\texttt{1.23e-2} &	\texttt{1.23e-2} &	\texttt{2.60e-4} &	\texttt{1.08e-2} &	\texttt{1.09e-2} &	\texttt{2.20e-3} \\
7 &	\texttt{1.13e-1} &	\texttt{1.13e-1} &	\texttt{1.12e-1} &	\texttt{1.22e-2} &	\texttt{1.22e-2} &	\texttt{1.18e-3} &	\texttt{8.32e-3} &	\texttt{8.62e-3} &	\texttt{9.08e-3} \\
                \hline
            \end{tabular}
            \end{footnotesize}
        \end{center}
    \end{table}

    \begin{table}\caption{\label{tab:ViscousBurgersSmallDiffusion}Test 3 for the 1D viscous Burgers equation taking $\maxlevel = 11$ and $\nu = 0.005$ (small diffusion) and performing the computation using a mesh at level $\minlevel$ as indicated.}
        \begin{center}
            \begin{footnotesize}
            \begin{tabular}{cccccccccc}
                  & \multicolumn{3}{|c|}{Haar $\predictionstencil = 0$} & \multicolumn{3}{|c|}{$\predictionstencil = 1$} & \multicolumn{3}{|c}{Lax-Wendroff} \\
            \toprule[2pt]
                $\leveldifference_{\text{min}}$ & $\text{E}_{\text{adap}}^{\minlevel}$ & $\text{E}_{\text{adap}}^{\maxlevel}$ & $\text{D}_{\text{adap}}$ & $\text{E}_{\text{adap}}^{\minlevel}$ & $\text{E}_{\text{adap}}^{\maxlevel}$ & $\text{D}_{\text{adap}}$ & $\text{E}_{\text{adap}}^{\minlevel}$ & $\text{E}_{\text{adap}}^{\maxlevel}$ & $\text{D}_{\text{adap}}$\\
                \hline
0 &	\texttt{5.31e-3} &	\texttt{5.31e-3} &	\texttt{0.00e+0} &	\texttt{5.31e-3} &	\texttt{5.31e-3} &	\texttt{0.00e+0} &	\texttt{5.31e-3} &	\texttt{5.31e-3} &	\texttt{0.00e+0} \\
1 &	\texttt{4.96e-3} &	\texttt{4.96e-3} &	\texttt{1.16e-3} &	\texttt{5.31e-3} &	\texttt{5.31e-3} &	\texttt{3.47e-6} &	\texttt{5.29e-3} &	\texttt{5.29e-3} &	\texttt{2.72e-5} \\
2 &	\texttt{4.62e-3} &	\texttt{4.61e-3} &	\texttt{3.41e-3} &	\texttt{5.31e-3} &	\texttt{5.31e-3} &	\texttt{2.34e-5} &	\texttt{5.21e-3} &	\texttt{5.22e-3} &	\texttt{1.38e-4} \\
3 &	\texttt{6.81e-3} &	\texttt{6.78e-3} &	\texttt{7.76e-3} &	\texttt{5.28e-3} &	\texttt{5.30e-3} &	\texttt{1.41e-4} &	\texttt{4.90e-3} &	\texttt{4.92e-3} &	\texttt{6.17e-4} \\
4 &	\texttt{1.48e-2} &	\texttt{1.47e-2} &	\texttt{1.64e-2} &	\texttt{5.28e-3} &	\texttt{5.31e-3} &	\texttt{8.63e-4} &	\texttt{4.54e-3} &	\texttt{4.58e-3} &	\texttt{3.54e-3} \\
5 &	\texttt{3.23e-2} &	\texttt{3.20e-2} &	\texttt{3.34e-2} &	\texttt{5.92e-3} &	\texttt{6.14e-3} &	\texttt{6.08e-3} &	\texttt{1.43e-2} &	\texttt{1.48e-2} &	\texttt{1.70e-2} \\
6 &	\texttt{6.67e-2} &	\texttt{6.49e-2} &	\texttt{6.57e-2} &	\texttt{2.91e-2} &	\texttt{3.36e-2} &	\texttt{3.37e-2} &	\texttt{9.97e-2} &	\texttt{1.05e-1} &	\texttt{1.04e-1} \\
7 &	\texttt{1.07e-1} &	\texttt{1.24e-1} &	\texttt{1.25e-1} &	\texttt{2.55e-1} &	\texttt{2.45e-1} &	\texttt{2.42e-1} &	\texttt{8.03e-1} &	\texttt{8.18e-1} &	\texttt{8.19e-1} \\
                \hline
            \end{tabular}
            \end{footnotesize}
        \end{center}
    \end{table}

    \begin{figure}
       \begin{center}
            \includegraphics[width=1.0\textwidth]{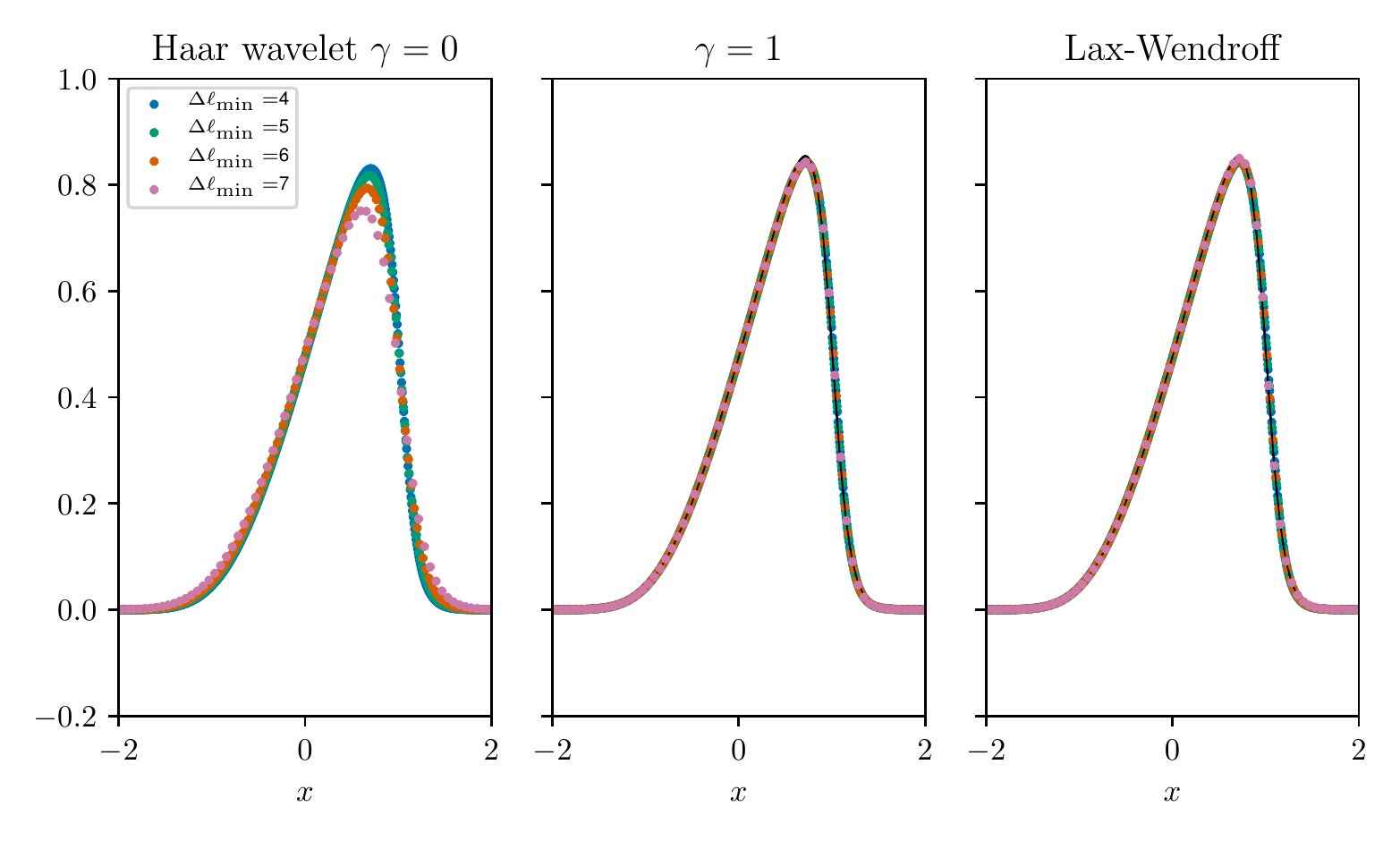}
        \end{center}\caption{\label{fig:viscous_burgers_large}Solution in test 3 (on a chosen sector of the domain $[-3, 3]$) at the final time $T$ of the 1D viscous Burgers equation with $\nu = 0.05$ (large diffusion) for different $\leveldifference_{\text{min}}$ and schemes.}
    \end{figure} 
    
    \begin{figure}
       \begin{center}
            \includegraphics[width=1.0\textwidth]{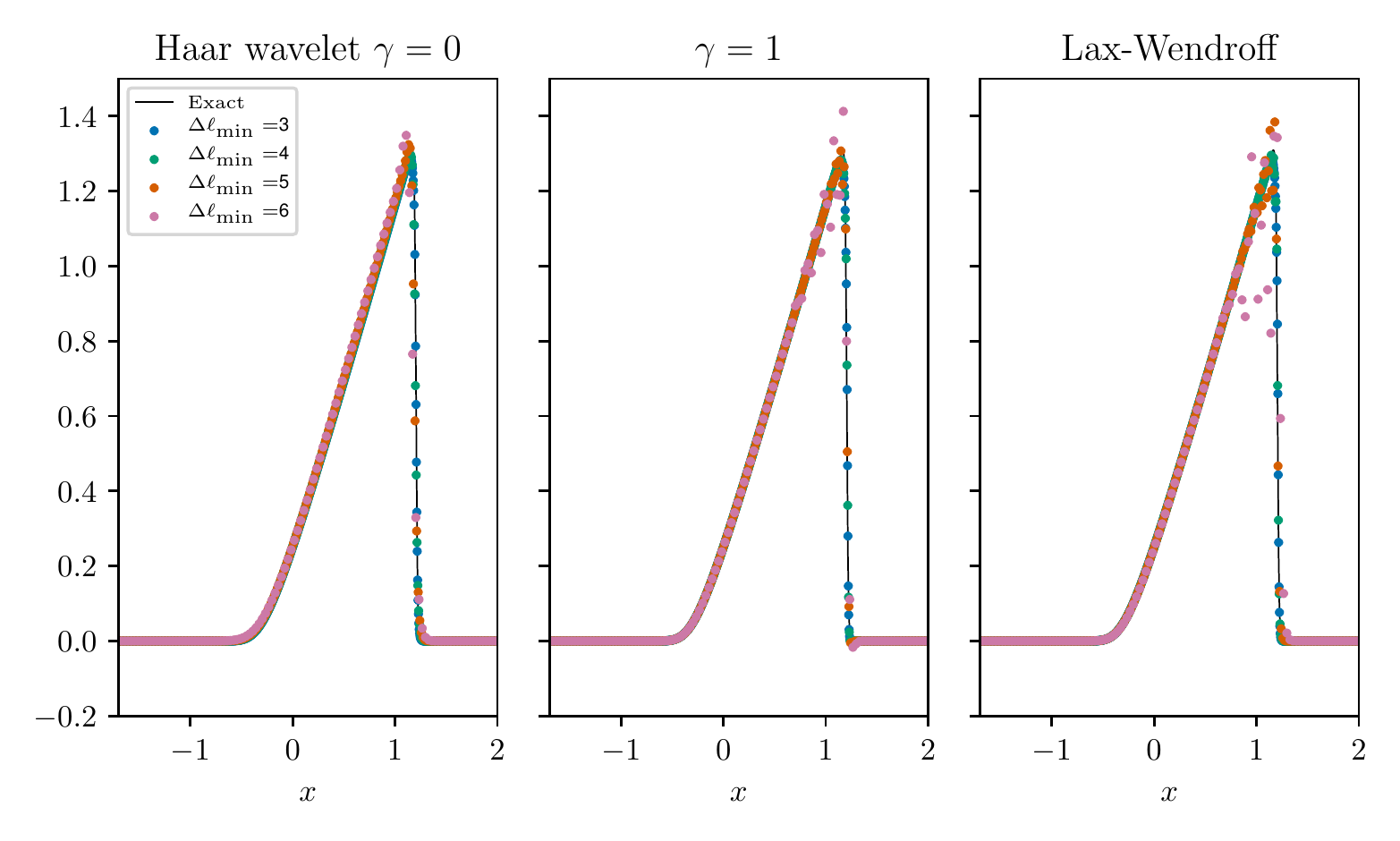}
        \end{center}\caption{\label{fig:viscous_burgers_small}Solution in test 3 (on a chosen sector of the domain $[-3, 3]$) at the final time $T$ of the 1D viscous Burgers equation with $\nu = 0.005$ (small diffusion) for different $\leveldifference_{\text{min}}$ and schemes.}
    \end{figure}

    We first perform the same kind of test than in Section \ref{sec:LinearAdvectionDiffusion} and the results are on Table \ref{tab:ViscousBurgersSmallDiffusion} and \ref{tab:ViscousBurgersLargeDiffusion} and Figure \ref{fig:viscous_burgers_large} and \ref{fig:viscous_burgers_small}.
    We point out the following facts:

    \begin{itemize}
        \item \textbf{Multiresolution scheme for $\predictionstencil = 0$}. In the case of large diffusion, we see that the method adds much numerical diffusion yielding unreliable results. On the other hand, for the small diffusion, the result seems good from a graphic point of view because of the secondary role of diffusion on the shape of the solution. 
        However, the discrepancies from the reference scheme are important in both cases.
        Compared to the other strategies, the difference with respect to the reference algorithm is larger, as expected: starting from $\leveldifference_{\text{min}} = 3$ (for large diffusion) and $\leveldifference_{\text{min}} = 2$ (for small diffusion), the term $\text{D}_{\text{adap}}$ can no longer be neglected.

        \item \textbf{Multiresolution scheme for $\predictionstencil = 1$}. 
        The plots of the solution show that it agrees well with the expected one. We can notice a slight crushing of the solution for the large diffusion which can be considered a fourth order effect.
        In the case of small diffusion, despite the fact that the reference scheme does not oscillate close to the steep zone of the solution, we see that the adaptive method does so for large $\leveldifference_{\text{min}}$. This cannot be due to third-order terms, since they are matched, so one may argue that these are fifth-order effects (not likely) or the consequence of the fact that we are no longer allowed to perform Taylor expansions either because the spatial step is too large or because the solution is not smooth enough. 
        Indeed, as already said, this is the proof that using a fixed coarsened mesh is not a good approach to deal with moving singularities. This context calls for dynamically adapted meshes and error control.
        Still, the difference with the reference scheme is minimized for this choice of reconstruction and the impact of the adaptive scheme can be neglected  for any $\leveldifference_{\text{min}}$ for the large diffusion and until $\leveldifference_{\text{min}} = 4$ for the small diffusion.
        \item \textbf{Lax-Wendroff scheme}. The result for the large diffusion seems reliable, even if the method slightly ``overshoots'' the exact solution allegedly due to the third-order mismatch. On the other hand, the test with small diffusion clearly shows that the method perturbs the dispersive terms at third order, inducing oscillations near the kinky zones of the solution.
        Compared to the reference solution, the behavior of the Lax-Wendroff scheme is situated half-way between those for $\predictionstencil = 0$ and $\predictionstencil = 1$, as expected.
        in particular, the impact of the adaptive stream is negligible until $\leveldifference_{\text{min}} = 7$ (for large diffusion) and $\leveldifference_{\text{min}} = 3$ (for small diffusion).
    \end{itemize}
    For each stream strategy, we see that $\text{D}_{\text{adap}}$ stops to be negligible compared  $\text{E}_{\text{ref}}$ earlier for the small diffusion than for the large. This is coherent with the fact that the solution develops more high-frequency modes.
    Moreover, one limiting factor of the theoretical analysis are the implicit smoothness assumptions on the solutions. 
    In the case where the solution is close to singular and especially for large $\leveldifference_{\text{min}}$, the behavior of the numerical scheme on a uniform coarsened mesh deviates from the theoretical predictions because the smoothness assumption is not valid. 
    From a multiresolution perspective, the lack of smoothness translates into the fact that the details of the solution at the finest level $\maxlevel$ are not small close to the blowup.
    
    \FloatBarrier

    \subsection{2D Linear advection-diffusion equation}\label{sec:2DAdvectionDiffusionEquation}
    
        Before ending with a demonstration with mesh adaptation and a study on the influence of the collision phase on the quality of the numerical solution, we repeat the test of Section \ref{sec:LinearAdvectionDiffusion} for $\spatialdimensionality = 2$, to corroborate the extension of the previous analysis to the multidimensional setting done in Section \ref{sec:Equivalent2D}.
        We selected a quite ``rich'' numerical model in terms of degrees of freedom to show the generality of our analysis.

        \subsubsection{The problem and the reference scheme}
        The target equation is the same than \eqref{eq:LinearAdvectionDiffusionEquation} just taking $\spatialdimensionality = 2$ (see Table \ref{tab:TestCasesResumee}), therefore the solution is
        \begin{equation*}
            u(t, \vectorial{x}) = \frac{1}{4 \pi \nu (t_0 + t)} \text{exp} \adaptiveroundbrackets{-\frac{|\vectorial{x} - \vectorial{V}t|^2}{4\nu (t_0 + t)}}.
        \end{equation*}

        The scheme we use is the D2Q9 with velocities given by
        \begin{equation*}
            \normalizedvelocityletter_{\populationindex} = 
            \begin{cases}
                (0, 0), \qquad &\populationindex = 0, \\
                \adaptiveroundbrackets{\cos{\adaptiveroundbrackets{\frac{\pi}{2}(\populationindex - 1)}}, \sin{\adaptiveroundbrackets{\frac{\pi}{2}(\populationindex - 1)}}},  \qquad &\populationindex = 1,2,3,4, \\
                \adaptiveroundbrackets{\cos{\adaptiveroundbrackets{\frac{\pi}{2}(\populationindex - 5)+\frac{\pi}{4}}}, \sin{\adaptiveroundbrackets{\frac{\pi}{2}(\populationindex - 5) + \frac{\pi}{4}}}},  \qquad &\populationindex = 5,6,7,8,
            \end{cases}
        \end{equation*}
        with the moments by Lallemand and Luo \cite{lallemand2000theory} relaxing with $\operatorial{S} = \text{diag}(0, s, s, 1, 1, 1, 1, 1, 1)$
        \begin{equation*}
            \operatorial{M} = \adaptiveroundbrackets{
            \begin{matrix}
        1 &  1  &  1   &  1  &  1  &    1   &   1   &   1   &   1   \\
        0 & \lambda  &  0   & -\lambda &  0  &   \lambda   &  -\lambda  &  -\lambda  &  \lambda   \\
        0 &  0  &  \lambda  &  0  & -\lambda &   \lambda   &  \lambda   &  -\lambda  &  -\lambda  \\
        -4\lambda^2 & -\lambda^2  & -\lambda^2   & -\lambda^2  & -\lambda^2  &  2\lambda^2  & 2\lambda^2  & 2\lambda^2  & 2\lambda^2  \\
        0 &  -2\lambda^3  &  0   &  2\lambda^3  &  0  &   \lambda^3   & -\lambda^3   & -\lambda^3   &  \lambda^3   \\
        0 &  0  &  -2\lambda^3   &  0  &  2\lambda^3  &   \lambda^3   &  \lambda^3   & -\lambda^3   & -\lambda^3   \\
        4\lambda^4 &  -2\lambda^4  &  -2\lambda^4   &  -2\lambda^4 &  -2\lambda^4  &   \lambda^4   &  \lambda^4   &  \lambda^4   &  \lambda^4   \\
        0 & \lambda^2  & -\lambda^2  & \lambda^2  & -\lambda^2 &    0   &   0   &   0   &   0   \\
        0 &  0  &  0   &  0  &  0  &   \lambda^2   & -\lambda^2   &  \lambda^2   & -\lambda^2  
            \end{matrix}},
        \end{equation*}
        with $s = (1/2 + 3 \nu/(\latticevelocity \spacestep))^{-1}$ to enforce the diffusivity.
        The equilibria are based on the second-order expansion of the Maxwellian as in \cite{fakhari2016mass}
        \begin{align*}
            m^{1, \text{eq}} = V_x m^0, \quad m^{2, \text{eq}} = V_y m^0, \quad m^{3, \text{eq}} = (-2 \latticevelocity^2 + 3|\vectorial{V}|^2) m^0, \\
            m^{4, \text{eq}} = -\latticevelocity^2 V_x m^0, \quad m^{5, \text{eq}} = -\latticevelocity^2 V_y m^0, \quad m^{6, \text{eq}} = (\latticevelocity^4 - 3 \latticevelocity^2 |\vectorial{V}|^2)m^0, \\
            m^{7, \text{eq}} = (V_x^2 - V_y^2)m^0, \quad m^{8, \text{eq}} = V_x V_y m^0.
        \end{align*}
        
        The parameters of the problem are $T = 0.5$, $t_0 = 1$, $V_x = 0.5$, $V_y = 0.5$, $\nu = 0.005$, $\latticevelocity = 1$. The computation is done on the bounded domain $[-1/2, 1]^2$.

    \subsubsection{Results and discussion}
    
            \begin{figure}
       \begin{center}
            \includegraphics[width=1.0\textwidth]{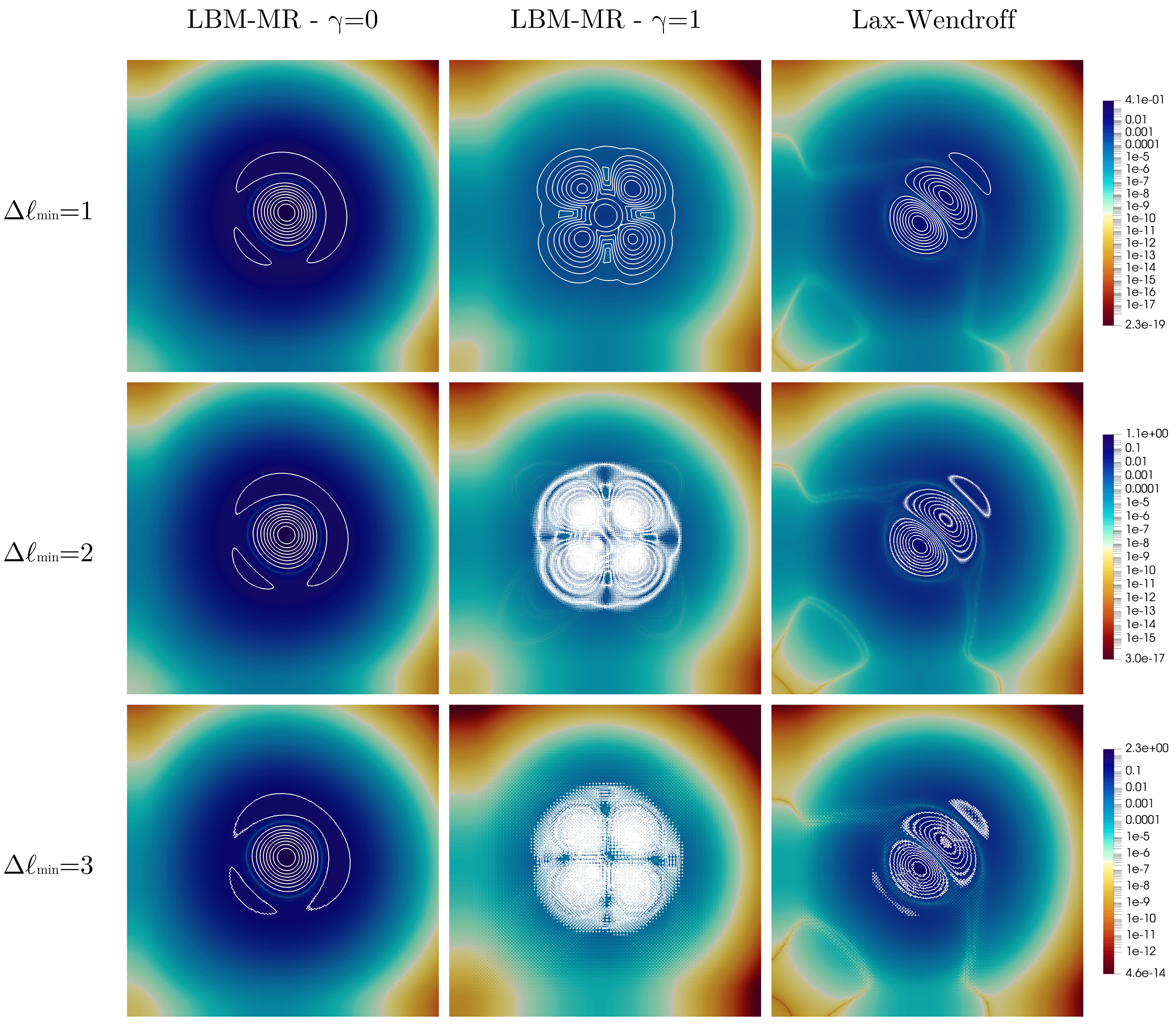}
        \end{center}\caption{\label{fig:2d_difference}Spatial patterns of $\text{D}_{\text{adap}}$ in test 4 at the final time $T$ with logarithmic (to compare $\predictionstencil = 0, 1$ and Lax-Wendroff) color-scale for each choice of $\leveldifference_{\text{min}}$. We also plot ten isocontours to show the main trends which are hidden behind the logarithmic scale.}
    \end{figure} 

    \begin{figure}
       \begin{center}
            \includegraphics[width=1.0\textwidth]{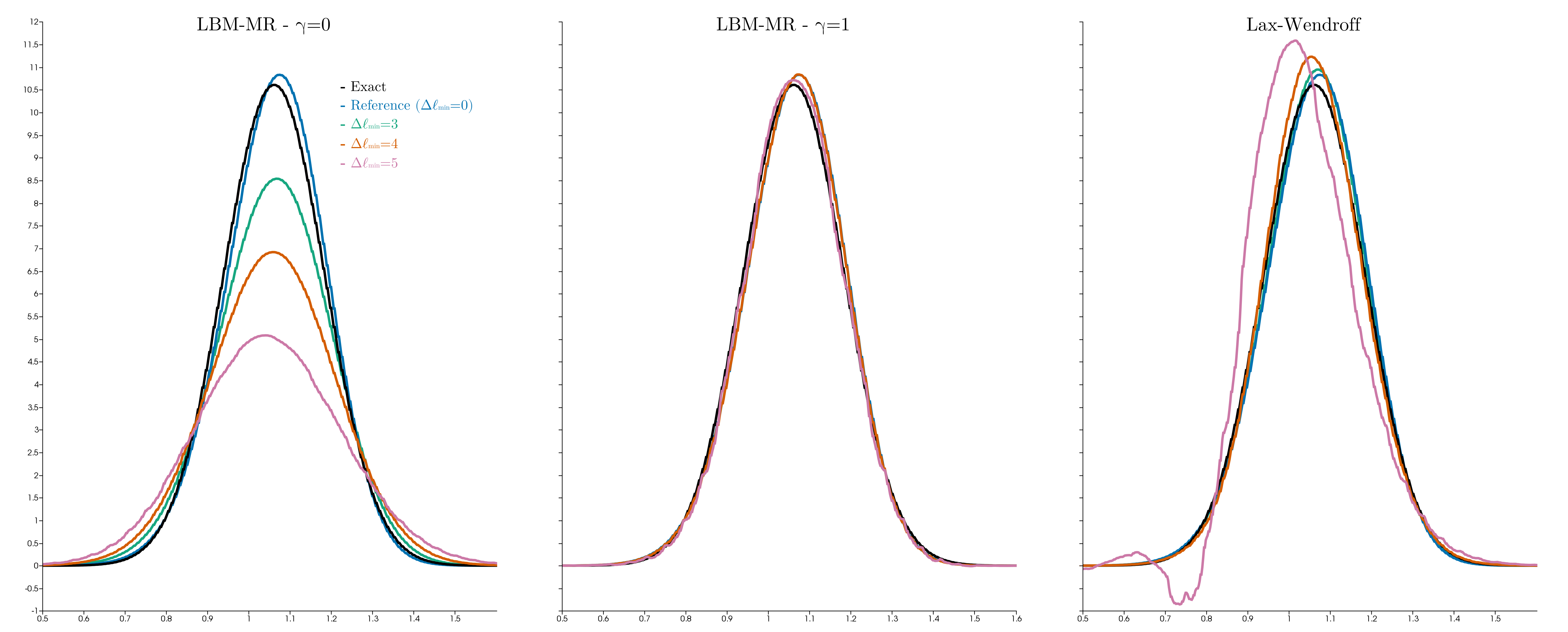}
        \end{center}\caption{\label{fig:2d_solutions}Solution of test 4 on a diagonal cut at the final time $T$ of the 2D linear advection diffusion equation for different $\leveldifference_{\text{min}}$ and schemes.}
    \end{figure}

        \begin{table}\caption{\label{tab:LinearAdvectionDiffusion2D}Test 4 for the 2D Linear advection diffusion equation taking $\maxlevel = 9$ and performing the computation using a mesh at level $\minlevel$ as indicated.}
        \begin{center}
            \begin{footnotesize}
            \begin{tabular}{cccccccccc}
                  & \multicolumn{3}{|c|}{Haar $\predictionstencil = 0$} & \multicolumn{3}{|c|}{$\predictionstencil = 1$} & \multicolumn{3}{|c}{Lax-Wendroff} \\
            \toprule[2pt]
                $\leveldifference_{\text{min}}$ & $\text{E}_{\text{adap}}^{\minlevel}$ & $\text{E}_{\text{adap}}^{\maxlevel}$ & $\text{D}_{\text{adap}}$ & $\text{E}_{\text{adap}}^{\minlevel}$ & $\text{E}_{\text{adap}}^{\maxlevel}$ & $\text{D}_{\text{adap}}$ & $\text{E}_{\text{adap}}^{\minlevel}$ & $\text{E}_{\text{adap}}^{\maxlevel}$ & $\text{D}_{\text{adap}}$\\
                \hline
0 &	\texttt{4.86e-2} &	\texttt{4.86e-2} &	\texttt{0.00e+0} &	\texttt{4.86e-2} &	\texttt{4.86e-2} & \texttt{0.00e+0} & \texttt{4.86e-2} & \texttt{4.86e-2} & \texttt{0.00e+0} \\
1 &	\texttt{4.61e-2} &	\texttt{4.61e-2} &	\texttt{2.79e-2} &	\texttt{4.86e-2} &	\texttt{4.86e-2} & \texttt{9.42e-5} & \texttt{4.80e-2} & \texttt{4.80e-2} & \texttt{8.20e-4} \\
2 &	\texttt{7.60e-2} &	\texttt{7.58e-2} &	\texttt{8.06e-2} &	\texttt{4.86e-2} &	\texttt{4.87e-2} & \texttt{3.89e-4} & \texttt{4.55e-2} & \texttt{4.56e-2} & \texttt{4.09e-3} \\
3 &	\texttt{1.65e-1} &	\texttt{1.64e-1} &	\texttt{1.75e-1} &	\texttt{4.83e-2} &	\texttt{4.87e-2} & \texttt{1.62e-3} & \texttt{3.66e-2} & \texttt{3.71e-2} & \texttt{1.71e-2} \\
4 &	\texttt{3.19e-1} &	\texttt{3.16e-1} &	\texttt{3.29e-1} &	\texttt{4.64e-2} &	\texttt{4.82e-2} & \texttt{7.49e-3} & \texttt{3.69e-2} & \texttt{4.01e-2} & \texttt{6.90e-2} \\
5 &	\texttt{5.47e-1} &	\texttt{5.38e-1} &	\texttt{5.51e-1} &	\texttt{3.69e-2} &	\texttt{4.99e-2} & \texttt{4.94e-2} & \texttt{2.27e-1} & \texttt{2.39e-1} & \texttt{2.82e-1} \\
6 &	\texttt{8.22e-1} &	\texttt{8.16e-1} &	\texttt{8.26e-1} &	\texttt{4.44e-1} &	\texttt{4.74e-1} & \texttt{5.14e-1} & \texttt{9.29e-1} & \texttt{1.00e+0} & \texttt{1.04e+0} \\
                \hline
            \end{tabular}
            \end{footnotesize}
        \end{center}
    \end{table}
    
    We perform the same kind of test than for the unidimensional problem, by taking $\maxlevel = 9$ and using different minimum levels $\minlevel$. 
    The full results are on Table \ref{tab:LinearAdvectionDiffusion2D} and some plots of the solution in Figures \ref{fig:2d_difference} and \ref{fig:2d_solutions}.
    We observe the following facts:
    \begin{itemize}
        \item \textbf{Multiresolution scheme for $\predictionstencil = 0$}. As one expects, the diffusion term is not correctly handled. This results in a non-negligible additional error $\text{D}_{\text{adap}}$ in Table \ref{tab:LinearAdvectionDiffusion2D} and Figure \ref{fig:2d_solutions} clearly shows that the packet is crushed way too rapidly. It is also interesting to notice that since the most important contribution to $\text{D}_{\text{adap}}$ is an additional isotropic diffusion, the structure of $\text{D}_{\text{adap}}$ (see the white contours on Figure \ref{fig:2d_difference}) is essentially isotropic.
        \item \textbf{Multiresolution scheme for $\predictionstencil = 1$}. On the other hand, this method successfully copes with the diffusion phenomena, being able to have a negligible $\text{D}_{\text{adap}}$ until $\leveldifference_{\text{min}} = 5$. Figure \ref{fig:2d_solutions} shows a very good agreement with the expected solution and Figure \ref{fig:2d_difference} shows that the discrepancies from the reference scheme are essentially isotropic, since made up of fourth-order terms, with additional rapidly oscillatory terms when $\leveldifference_{\text{min}}$ increases. This creates the dense amount of contours we can observe.
        \item \textbf{Lax-Wendroff scheme}. Again as expected, the method does not alter the diffusion terms but $\text{D}_{\text{adap}}$ starts to be a dominant term earlier than for $\predictionstencil = 1$, namely around $\leveldifference_{\text{min}} = 3$. This can be also understood when looking at Figure \ref{fig:2d_solutions}, where one clearly notices the alteration of the third order terms which induces a dispersive effect. This can also be seen on Figure \ref{fig:2d_difference}, where the dispersive effect shows to be non-isotropic and linked with the propagation of the packet in space at finite velocity. Once $\leveldifference_{\text{min}}$ increases, we still observe, though way less intensely than for $\predictionstencil = 1$, the development of high-frequency components of $\text{D}_{\text{adap}}$.
    \end{itemize}
    
    \FloatBarrier

\subsection{Mesh adaptation}\label{sec:MeshAdaptationTest}

    The actual mesh adaptation by multiresolution is not the main subject of this work but we have seen in Section \ref{sec:ViscousBurgersEquation} that it is really needed in some situations.
    In particular, we want to look at the fact that the adaptive scheme is accurate enough to allow, even if the initial mesh is quite coarsened with respect to the finest level $\maxlevel$, to progressively refine the mesh when steep gradients occur, as in the solution of the viscous Burgers equation in Section \ref{sec:ViscousBurgersEquation}.
    This is the only test of this paper where mesh adapted by multiresolution are used.
    To this end, we consider $\predictionstencil = 1$ and we apply our multiresolution algorithm \eqref{eq:DetailInequality} to perform mesh adaptation at each time step using $\epsilon = 0.0001$ and $\overline{\mu} = 2$.
    
    \subsubsection{Results and discussion}
    
    Looking at the result on Figure \ref{fig:viscous_burgers_large_gm_1_with_MR} for the large diffusion, we observe that the finest reached level (namely with $\leveldifference = 4$) is the same at the beginning and at the end of the simulation. This happens because the smoothness of the solution does not change as time advances and therefore the magnitude of the details remain essentially the same.
    In this case, we have already seen in Section \ref{sec:ViscousBurgersEquation} that a uniform coarsened mesh is sufficient to correctly follow the dynamics and that the theoretical analysis using Taylor expansions is legitimate.
    
    On the other hand in the case of small diffusion, we observe on Figure \ref{fig:viscous_burgers_small_gm_1_with_MR} that the expectations are met: the initial Gaussian datum is regular enough to coarsen the mesh everywhere. However, the adaptive scheme correctly captures the dynamics of the solution and multiresolution eventually puts more and more refined grids to correctly follow these structures. The final solution is close to be singular, we see that we also reach the finest level $\maxlevel$ close to the steep areas of the solution, where details have really changed order of magnitude.
    This demonstrates, on one side, that the numerical scheme is reliable enough to compensate possible losses of information induced by the mesh coarsening \eqref{eq:DetailInequality} \emph{via} multiresolution and, on the other side, that the interest of the theoretical analysis and the use of fixed meshes is limited to smooth solutions.
    For singular solutions, a dynamic refinement algorithm is actually needed.

    \begin{figure}
       \begin{center}
            \includegraphics[width=1.0\textwidth]{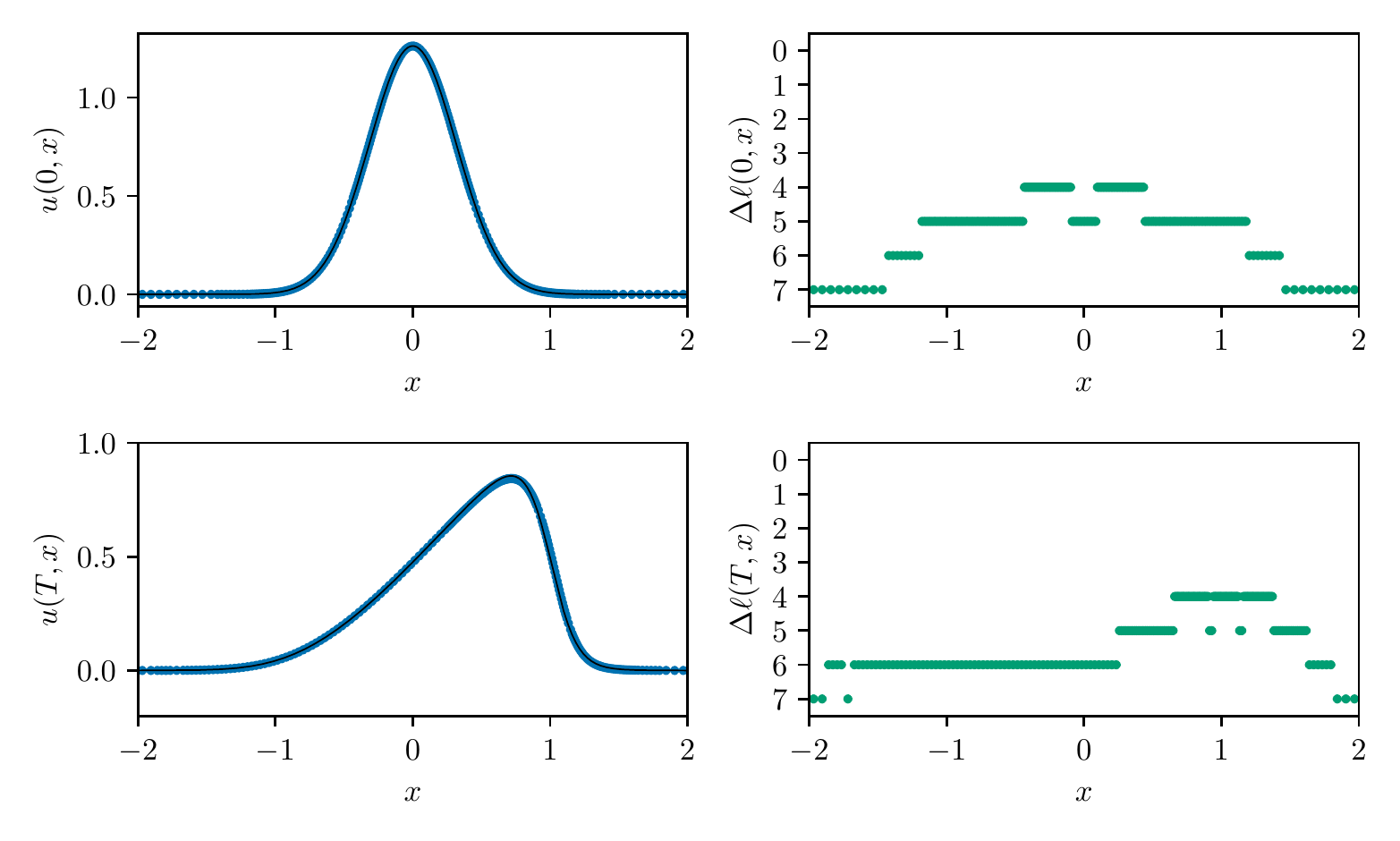}
        \end{center}\caption{\label{fig:viscous_burgers_large_gm_1_with_MR}Solution of test 4 at the initial and final time $T$ of the 1D viscous Burgers equation with $\nu = 0.05$ (large diffusion) for $\predictionstencil = 1$ using adaptive multiresolution with $\epsilon = 0.0001$.
        ~ The exact solution is traced in black.}
    \end{figure} 
    
    \begin{figure}
       \begin{center}
            \includegraphics[width=1.0\textwidth]{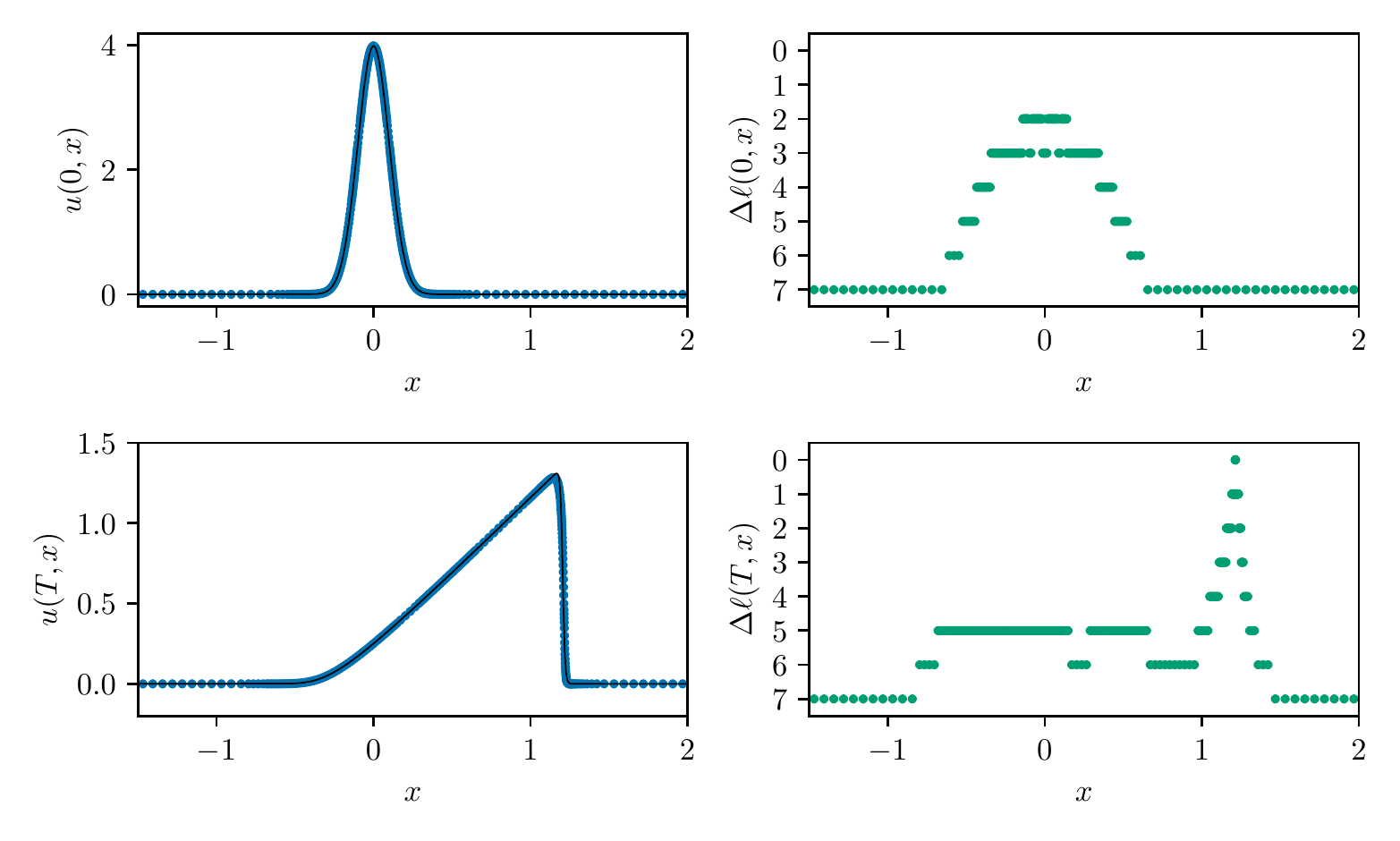}
        \end{center}\caption{\label{fig:viscous_burgers_small_gm_1_with_MR}Solution of test 4 at the initial and final time $T$ of the 1D viscous Burgers equation with $\nu = 0.005$ (small diffusion) for $\predictionstencil = 1$ using adaptive multiresolution with $\epsilon = 0.0001$.
        ~ The exact solution is traced in black.}
    \end{figure} 
    
    \FloatBarrier

\subsection{Influence of the collision operator}\label{sec:StudyCollisionOperator}
    Before concluding, we want to study the influence of the choice of collision algorithm on the quality of the outcome of our numerical method. In particular, we want to elucidate if the collision phase must be performed on the finest level of resolution $\maxlevel$ or not.
    We repeat the first simulation of Section \ref{sec:ViscousBurgersEquation}, which was the only non-linear one we have done in this work, using $\gamma = 1$, which have proved to be the most reliable stream approximation between those we analyzed.
    This is coupled with the LBM-MR-LC, the LBM-MR-RC and the LBM-MR-PQC collision techniques with a Gauss-Legendre quadrature of order five, see \cite{abramowitz1964}. On the reference interval $[-1, 1]$, this corresponds to the quadrature points and weights $(\tilde{x}_1, \tilde{w}_1) = (-\sqrt{3/5}, 5/9)$, $(\tilde{x}_2, \tilde{w}_2) = (0, 8/9)$ and $(\tilde{x}_3, \tilde{w}_3) = (\sqrt{3/5}, 5/9)$.
    
    \subsubsection{Results and discussion}

    \begin{table}\caption{\label{tab:ViscousBurgersCollisionLargeDiffusion}Test 4 for the 1D viscous Burgers equation taking $\maxlevel = 11$ and $\nu = 0.05$ (large diffusion) and performing the computation using a mesh at level $\minlevel$ as indicated. This is a comparison between different collision strategies, contrarily to Table \ref{tab:ViscousBurgersLargeDiffusion} (some results coincide).}
        \begin{center}
            \begin{footnotesize}
            \begin{tabular}{cccccccccc}
                  & \multicolumn{3}{|c|}{LBM-MR-LC} & \multicolumn{3}{|c|}{LBM-MR-RC} & \multicolumn{3}{|c}{LBM-MR-PQC} \\
            \toprule[2pt]
                $\leveldifference_{\text{min}}$ & $\text{E}_{\text{adap}}^{\minlevel}$ & $\text{E}_{\text{adap}}^{\maxlevel}$ & $\text{D}_{\text{adap}}$ & $\text{E}_{\text{adap}}^{\minlevel}$ & $\text{E}_{\text{adap}}^{\maxlevel}$ & $\text{D}_{\text{adap}}$ & $\text{E}_{\text{adap}}^{\minlevel}$ & $\text{E}_{\text{adap}}^{\maxlevel}$ & $\text{D}_{\text{adap}}$\\
                \hline
0 &		\texttt{1.23e-2} &	\texttt{1.23e-2} &	\texttt{0.00e+0} &			\texttt{1.23e-2} &	\texttt{1.23e-2} &	\texttt{0.00e+0} &			\texttt{1.23e-2} &	\texttt{1.23e-2} &	\texttt{5.18e-8} \\
1 &		\texttt{1.23e-2} &	\texttt{1.23e-2} &	\texttt{1.88e-7} &			\texttt{1.23e-2} &	\texttt{1.23e-2} &	\texttt{1.14e-7} &			\texttt{1.23e-2} &	\texttt{1.23e-2} &	\texttt{1.27e-7} \\
2 &		\texttt{1.23e-2} &	\texttt{1.23e-2} &	\texttt{9.34e-7} &			\texttt{1.23e-2} &	\texttt{1.23e-2} &	\texttt{5.70e-7} &			\texttt{1.23e-2} &	\texttt{1.23e-2} &	\texttt{5.76e-7} \\
3 &		\texttt{1.23e-2} &	\texttt{1.23e-2} &	\texttt{3.89e-6} &			\texttt{1.23e-2} &	\texttt{1.23e-2} &	\texttt{2.40e-6} &			\texttt{1.23e-2} &	\texttt{1.23e-2} &	\texttt{2.41e-6} \\
4 &		\texttt{1.23e-2} &	\texttt{1.23e-2} &	\texttt{1.57e-5} &			\texttt{1.23e-2} &	\texttt{1.23e-2} &	\texttt{9.78e-6} &			\texttt{1.23e-2} &	\texttt{1.23e-2} &	\texttt{9.79e-6} \\
5 &		\texttt{1.23e-2} &	\texttt{1.23e-2} &	\texttt{6.30e-5} &			\texttt{1.23e-2} &	\texttt{1.23e-2} &	\texttt{4.06e-5} &			\texttt{1.23e-2} &	\texttt{1.23e-2} &	\texttt{4.06e-5} \\
6 &		\texttt{1.23e-2} &	\texttt{1.23e-2} &	\texttt{2.60e-4} &			\texttt{1.23e-2} &	\texttt{1.23e-2} &	\texttt{1.86e-4} &			\texttt{1.23e-2} &	\texttt{1.23e-2} &	\texttt{1.86e-4} \\
7 &		\texttt{1.22e-2} &	\texttt{1.22e-2} &	\texttt{1.18e-3} &			\texttt{1.22e-2} &	\texttt{1.23e-2} &	\texttt{9.97e-4} &			\texttt{1.22e-2} &	\texttt{1.23e-2} &	\texttt{9.98e-4} \\
                \hline
            \end{tabular}
            \end{footnotesize}
        \end{center}
    \end{table}
    
        \begin{table}\caption{\label{tab:ViscousBurgersCollisionSmallDiffusion}Test 4 for the 1D viscous Burgers equation taking $\maxlevel = 11$ and $\nu = 0.005$ (small diffusion) and performing the computation using a mesh at level $\minlevel$ as indicated. This is a comparison between different collision strategies, contrarily to Table \ref{tab:ViscousBurgersSmallDiffusion} (some results coincide).}
        \begin{center}
            \begin{footnotesize}
            \begin{tabular}{cccccccccc}
                  & \multicolumn{3}{|c|}{LBM-MR-LC} & \multicolumn{3}{|c|}{LBM-MR-RC} & \multicolumn{3}{|c}{LBM-MR-PQC} \\
            \toprule[2pt]
                $\leveldifference_{\text{min}}$ & $\text{E}_{\text{adap}}^{\minlevel}$ & $\text{E}_{\text{adap}}^{\maxlevel}$ & $\text{D}_{\text{adap}}$ & $\text{E}_{\text{adap}}^{\minlevel}$ & $\text{E}_{\text{adap}}^{\maxlevel}$ & $\text{D}_{\text{adap}}$ & $\text{E}_{\text{adap}}^{\minlevel}$ & $\text{E}_{\text{adap}}^{\maxlevel}$ & $\text{D}_{\text{adap}}$\\
                \hline
0 &		\texttt{5.31e-3} &	\texttt{5.31e-3} &	\texttt{0.00e+0} &			\texttt{5.31e-3} &	\texttt{5.31e-3} &	\texttt{0.00e+0} &			\texttt{5.31e-3} &	\texttt{5.31e-3} &	\texttt{1.19e-6} \\
1 &		\texttt{5.31e-3} &	\texttt{5.31e-3} &	\texttt{3.47e-6} &			\texttt{5.31e-3} &	\texttt{5.31e-3} &	\texttt{2.79e-6} &			\texttt{5.31e-3} &	\texttt{5.31e-3} &	\texttt{3.02e-6} \\
2 &		\texttt{5.31e-3} &	\texttt{5.31e-3} &	\texttt{2.34e-5} &			\texttt{5.31e-3} &	\texttt{5.31e-3} &	\texttt{2.28e-5} &			\texttt{5.31e-3} &	\texttt{5.31e-3} &	\texttt{2.29e-5} \\
3 &		\texttt{5.28e-3} &	\texttt{5.30e-3} &	\texttt{1.41e-4} &			\texttt{5.28e-3} &	\texttt{5.28e-3} &	\texttt{1.43e-4} &			\texttt{5.28e-3} &	\texttt{5.28e-3} &	\texttt{1.43e-4} \\
4 &		\texttt{5.28e-3} &	\texttt{5.31e-3} &	\texttt{8.63e-4} &			\texttt{5.29e-3} &	\texttt{5.27e-3} &	\texttt{8.93e-4} &			\texttt{5.29e-3} &	\texttt{5.27e-3} &	\texttt{8.93e-4} \\
5 &		\texttt{5.92e-3} &	\texttt{6.14e-3} &	\texttt{6.08e-3} &			\texttt{5.75e-3} &	\texttt{5.83e-3} &	\texttt{5.73e-3} &			\texttt{5.75e-3} &	\texttt{5.84e-3} &	\texttt{5.76e-3} \\
6 &		\texttt{2.91e-2} &	\texttt{3.36e-2} &	\texttt{3.37e-2} &			\texttt{2.67e-2} &	\texttt{3.11e-2} &	\texttt{3.14e-2} &			\texttt{2.67e-2} &	\texttt{3.12e-2} &	\texttt{3.15e-2} \\
7 &		\texttt{2.55e-1} &	\texttt{2.45e-1} &	\texttt{2.42e-1} &			\texttt{2.37e-1} &	\texttt{2.27e-1} &	\texttt{2.23e-1} &			\texttt{2.32e-1} &	\texttt{2.22e-1} &	\texttt{2.19e-1} \\
                \hline
            \end{tabular}
            \end{footnotesize}
        \end{center}
    \end{table}
    
    The result in the case of large diffusion is given in Table \ref{tab:ViscousBurgersCollisionLargeDiffusion}. We observe that for this smooth solution, the additional error $\text{D}_{\text{adap}}$ induced by the collision performed on the leaves is slightly larger (about 1.5 times) than those for the reconstructed collision and the interpolated collision. Still, all the errors have the same order of magnitude.
    On the other hand, the interpolated method behaves almost like the reconstructed method except for $\leveldifference_{\text{min}} = 0$ where it does not have $\text{D}_{\text{adap}} = 0$ because the method does not perfectly coincide with the reference one.
    
    Regarding the case of small diffusion in Table \ref{tab:ViscousBurgersCollisionSmallDiffusion}, we obtain similar results, with the collision performed on the leaves (LBM-MR-LC) showing marginally larger additional errors.
    This test shows that the LBM-MR-LC strategy can be regarded as reliable even for functions which do not behave polynomially, as the solution we considered, and which can present steep fronts.
    If one desires a slightly more qualitative collision strategy without significantly increase the computational cost, one may consider the predict-and-quadrate strategy.
    Overall, the reconstructed collision, although guaranteeing the most accurate results, is generally not a viable choice due to its cost and the fact that it provides performances which are marginally better than the other cheaper strategies. 
    
    \FloatBarrier

\section{Conclusions}\label{sec:Conclusions}

    In this original contribution, we have shown how to apply the classical analysis based on the equivalent equations introduced by Dubois \cite{dubois2008equivalent} to the LBM-MR schemes developed in \cite{bellotti2021sisc} and \cite{bellotti2021multidimensional}.
    This is based on the so-called ``reconstruction flattening'' procedure, which is thoroughly studied in the case of $\spatialdimensionality = 1$ and extended to the case $\spatialdimensionality = 2$. 
    
    Therefore, we are able to analyze the consistency of these methods with the target equations as for standard LBM methods and to find the maximal order of compliance of the adaptive scheme with the desired physics. In particular, our analysis has shown that the scheme based on the Haar wavelet is not accurate enough to handle the typical applications for which LBM schemes are designed, namely the simulation of models involving both transport and diffusion terms.
    The Lax-Wendroff scheme by \cite{fakhari2014finite} provides the minimal setting to utilize the most common LBM algorithms but it could yield unpredictable behaviors of dispersive nature, which could threaten the stability of the method.
    The multiresolution scheme for $\predictionstencil \geq 1$ proves to be the most reliable of the schemes we have analyzed, both in terms of consistency and stability.
    
    The analysis is valid for locally smooth solutions. This assumption is always met on grids adapted \emph{via} multiresolution because it guarantees a certain level of regularity of the solution at the local grid level.
    
    Our analysis has been validated using numerical simulations to solve scalar conservation laws, both linear and non-linear, in 1D and 2D, showing excellent agreement between the empirical behavior of the schemes and our asymptotic analysis. When the smoothness assumption was no longer valid on the uniform coarsened mesh, the adaptive setting has proved to be the right one to restore the validity of this assumption and to conclude on the reliability of the strategy through the previous analysis. 
    Moreover, we compared the outcome of our method against that of well-known works in literature \cite{fakhari2014finite, fakhari2015numerics, fakhari2016mass}, showing that, even if for a slightly larger computational cost, our method (for $\predictionstencil = 1$) is consistently more reliable.
    Eventually, we have studied the role of different collision strategies on non-linear problems to conclude that the cheap leaves collision is in general a good choice and accuracy is only marginally affected.
    
    Finally, let us mention the fact that we have worked on uniform grids in the present contribution in order to reproduce the local environment around a given cell on general dynamically adaptive grids for smooth solutions. However, within this framework, travelling waves or shock waves leading to a lower level of regularity usually remain propagated at the finest level of mesh as in the previous example (see Section \ref{sec:MeshAdaptationTest}) and thus never experience going through level jumps. However, when the mesh is fixed in advance and a level jump is present, the solution obtained with lattice Boltzmann methods is known to experience artificial reflections and spurious phenomena;  this is known to be a challenging problem. In fact, the formalism proposed in the present contribution allows to tackle this issue as well but requires a somewhat different setting which we have proposed in another short contribution  \cite{bellotti2021compterendus}. We can show that the reflected part of the wave is of small amplitude and can be controlled, as it is proportional to $\spacestep^4$, when selecting $\predictionstencil = 1$.
        
\section*{Acknowledgments}

The authors deeply thank Pierre Lallemand and Fran\c{c}ois Dubois for the stimulating discussions during the ``groupe de travail Schémas de Boltzmann sur réseau'' held at the IHP Paris in spring 2021.
Thomas Bellotti acknowledges a PhD funding (year 2019) from  \'Ecole polytechnique.
    
\section*{Reproducibility}

    The numerical code used for all the simulations are written in sequential \texttt{C++} as a part of the code \texttt{SAMURAI} (\textbf{S}tructured \textbf{Adaptive} mesh and \textbf{MU}lti-\textbf{R}esolution based on \textbf{A}lgebra of \textbf{I}ntervals) and can be found at \url{https://github.com/hpc-maths} along with its documentation.

\bibliographystyle{acm}
\bibliography{biblio.bib}

\begin{thebibliography}{10}

\bibitem{abramowitz1964}
{\sc Abramowitz, M., and Stegun, I.~A.}
\newblock {\em Handbook of mathematical functions with formulas, graphs, and
  mathematical tables}, vol.~55.
\newblock US Government printing office, 1964.

\bibitem{bellotti2021multidimensional}
{\sc Bellotti, T., Gouarin, L., Graille, B., and Massot, M.}
\newblock Multidimensional fully adaptive lattice boltzmann methods with error
  control based on multiresolution analysis.
\newblock {\em Journal of Computational Physics\/} (2021).
\newblock Submitted - Available on HAL :
  \url{https://hal.archives-ouvertes.fr/hal-03158073} and ArXiv :
  \url{https://arxiv.org/abs/2103.02903}.

\bibitem{bellotti2021sisc}
{\sc Bellotti, T., Gouarin, L., Graille, B., and Massot, M.}
\newblock Multiresolution-based mesh adaptation and error control for lattice
  boltzmann methods with applications to hyperbolic conservation laws.
\newblock {\em SIAM J. Scientific Computing\/} (2021).
\newblock Submitted - Available on HAL :
  \url{https://hal.archives-ouvertes.fr/hal-03148621} and ArXiv :
  \url{https://arxiv.org/abs/2102.12163}.

\bibitem{bellotti2021compterendus}
{\sc Bellotti, T., Gouarin, L., Graille, B., and Massot, M.}
\newblock Waves passing through grid jumps and multiresolution lattice
  boltzmann methods: fourth order analysis.
\newblock {\em Comptes Rendus - Mathématique\/} (2021).
\newblock In preparation.

\bibitem{bihari1997multiresolution}
{\sc Bihari, B.~L., and Harten, A.}
\newblock Multiresolution schemes for the numerical solution of 2-d
  conservation laws i.
\newblock {\em SIAM Journal on Scientific Computing 18}, 2 (1997), 315--354.

\bibitem{caetano2019result}
{\sc Caetano, F., Dubois, F., and Graille, B.}
\newblock A result of convergence for a mono-dimensional two-velocities lattice
  boltzmann scheme.
\newblock {\em arXiv preprint arXiv:1905.12393\/} (2019).

\bibitem{chapman1990mathematical}
{\sc Chapman, S., Cowling, T.~G., and Burnett, D.}
\newblock {\em The mathematical theory of non-uniform gases: an account of the
  kinetic theory of viscosity, thermal conduction and diffusion in gases}.
\newblock Cambridge university press, 1990.

\bibitem{cohen2003fully}
{\sc Cohen, A., Kaber, S., M{\"u}ller, S., and Postel, M.}
\newblock Fully adaptive multiresolution finite volume schemes for conservation
  laws.
\newblock {\em Mathematics of Computation 72}, 241 (2003), 183--225.

\bibitem{colella1990multidimensional}
{\sc Colella, P.}
\newblock Multidimensional upwind methods for hyperbolic conservation laws.
\newblock {\em Journal of Computational Physics 87}, 1 (1990), 171--200.

\bibitem{dellacherie2014construction}
{\sc Dellacherie, S.}
\newblock Construction and analysis of lattice boltzmann methods applied to a
  1d convection-diffusion equation.
\newblock {\em Acta Applicandae Mathematicae 131}, 1 (2014), 69--140.

\bibitem{devore1984maximal}
{\sc DeVore, R.~A., and Sharpley, R.~C.}
\newblock {\em Maximal functions measuring smoothness}, vol.~293.
\newblock American Mathematical Soc., 1984.

\bibitem{dubois2008equivalent}
{\sc Dubois, F.}
\newblock Equivalent partial differential equations of a lattice boltzmann
  scheme.
\newblock {\em Computers \& Mathematics with Applications 55}, 7 (2008),
  1441--1449.

\bibitem{dubois2013stable}
{\sc Dubois, F.}
\newblock Stable lattice boltzmann schemes with a dual entropy approach for
  monodimensional nonlinear waves.
\newblock {\em Computers \& Mathematics with Applications 65}, 2 (2013),
  142--159.

\bibitem{dupuis2003theory}
{\sc Dupuis, A., and Chopard, B.}
\newblock Theory and applications of an alternative lattice boltzmann grid
  refinement algorithm.
\newblock {\em Physical Review E 67}, 6 (2003), 066707.

\bibitem{eitel2013}
{\sc Eitel-Amor, G., Meinke, M., and Schr{\"o}der, W.}
\newblock A lattice-boltzmann method with hierarchically refined meshes.
\newblock {\em Computers \& Fluids 75\/} (2013), 127--139.

\bibitem{fakhari2016mass}
{\sc Fakhari, A., Geier, M., and Lee, T.}
\newblock A mass-conserving lattice boltzmann method with dynamic grid
  refinement for immiscible two-phase flows.
\newblock {\em Journal of Computational Physics 315\/} (2016), 434--457.

\bibitem{fakhari2014finite}
{\sc Fakhari, A., and Lee, T.}
\newblock Finite-difference lattice boltzmann method with a block-structured
  adaptive-mesh-refinement technique.
\newblock {\em Physical Review E 89}, 3 (2014), 033310.

\bibitem{fakhari2015numerics}
{\sc Fakhari, A., and Lee, T.}
\newblock Numerics of the lattice boltzmann method on nonuniform grids:
  standard lbm and finite-difference lbm.
\newblock {\em Computers \& Fluids 107\/} (2015), 205--213.

\bibitem{filippova1998grid}
{\sc Filippova, O., and H{\"a}nel, D.}
\newblock Grid refinement for lattice-bgk models.
\newblock {\em Journal of Computational physics 147}, 1 (1998), 219--228.

\bibitem{graille2014approximation}
{\sc Graille, B.}
\newblock Approximation of mono-dimensional hyperbolic systems: A lattice
  boltzmann scheme as a relaxation method.
\newblock {\em Journal of Computational Physics 266\/} (2014), 74--88.

\bibitem{harti1993discrete}
{\sc Harten, A.}
\newblock Discrete multi-resolution analysis and generalized wavelets.
\newblock {\em Applied numerical mathematics 12}, 1-3 (1993), 153--192.

\bibitem{hovhannisyan2010}
{\sc Hovhannisyan, N., and M{\"u}ller, S.}
\newblock On the stability of fully adaptive multiscale schemes for
  conservation laws using approximate flux and source reconstruction
  strategies.
\newblock {\em IMA journal of numerical analysis 30}, 4 (2010), 1256--1295.

\bibitem{kruger2017lattice}
{\sc Kr{\"u}ger, T., Kusumaatmaja, H., Kuzmin, A., Shardt, O., Silva, G., and
  Viggen, E.~M.}
\newblock The lattice boltzmann method.
\newblock {\em Springer International Publishing 10}, 978-3 (2017), 4--15.

\bibitem{lallemand2000theory}
{\sc Lallemand, P., and Luo, L.-S.}
\newblock Theory of the lattice boltzmann method: Dispersion, dissipation,
  isotropy, galilean invariance, and stability.
\newblock {\em Physical Review E 61}, 6 (2000), 6546.

\bibitem{landajuela2011burgers}
{\sc Landajuela, M.}
\newblock Burgers equation.
\newblock {\em BCAM Internship report: Basque Center for Applied Mathematics\/}
  (2011).

\bibitem{leveque2002finite}
{\sc LeVeque, R.}
\newblock {\em Finite volume methods for hyperbolic problems}, vol.~31.
\newblock Cambridge university press, 2002.

\bibitem{nguessan2021}
{\sc N’Guessan, M., Massot, M., Series, L., and Tenaud, C.}
\newblock High order time integration and mesh adaptation with error control
  for incompressible navier--stokes and scalar transport resolution on dual
  grids.
\newblock {\em Journal of Computational and Applied Mathematics 387\/} (2021),
  112542.

\bibitem{rohde2006}
{\sc Rohde, M., Kandhai, D., Derksen, J., and Van~den Akker, H.~E.}
\newblock A generic, mass conservative local grid refinement technique for
  lattice-boltzmann schemes.
\newblock {\em International journal for numerical methods in fluids 51}, 4
  (2006), 439--468.

\end{thebibliography}

\end{document}